\setlist[enumerate]{label=\textnormal{(\roman*)}}
\definecolor{britishracinggreen}{rgb}{0.0, 0.26, 0.15}
\definecolor{cobalt}{rgb}{0.0, 0.28, 0.67}
\DeclareSymbolFont{usualmathcal}{OMS}{cmsy}{m}{n}
\DeclareSymbolFontAlphabet{\mathcal}{usualmathcal}
\DeclareMathAlphabet\BCal{OMS}{cmsy}{b}{n}
\numberwithin{equation}{section}
\newcommand\dash{\nobreakdash-\hspace{0pt}}
\def\into{\hookrightarrow}
\def\Sch{\mathrm{Sch}}
\def\Sets{\mathrm{Sets}}
\def\LL{\mathbf{L}}
\def\RR{\mathbf R}
\newcommand\qcoh{\ensuremath{\mathrm{qcoh}}}
\newcommand\Ocal{\ensuremath{\mathcal{O}}}
\newcommand\Xcal{\ensuremath{\mathcal{X}}}
\newcommand\Ycal{\ensuremath{\mathcal{Y}}}
\newcommand\bounded{\ensuremath{\mathrm{b}}}
\newcommand\Et{\ensuremath{\mathrm{\textnormal{\'{e}t}}}}
\DeclareMathOperator\derived{\mathbf{D}}
\DeclareMathOperator{\QCoh}{Qcoh}
\DeclareMathOperator\Qcoh\QCoh %
\DeclareMathOperator\coh{coh}
\DeclareMathOperator{\Perf}{Perf}
\DeclareMathOperator{\cone}{cone}
\DeclareMathOperator\leftmutation{\mathbb{L}}
\DeclareMathOperator\rightmutation{\mathbb{R}}
\DeclareMathOperator{\Sym}{Sym}
\DeclareMathOperator{\Aff}{Aff}
\DeclareMathOperator{\Alg}{Alg}
\DeclareMathOperator{\Mod}{Mod}
\DeclareMathOperator{\op}{op}
\DeclareMathOperator{\Spec}{Spec}
\DeclareMathOperator{\Cone}{Cone}
\DeclareMathOperator{\im}{im}
\DeclareMathOperator{\Aut}{Aut}
\DeclareMathOperator\hocolim{hocolim}
\DeclareMathOperator\holim{holim}
\DeclareMathOperator{\Hom}{Hom}
\DeclareMathOperator{\RRHom}{\mathbf{R}Hom}
\DeclareMathOperator{\RRlHom}{\mathbf{R}\kern-0.025em\mathscr{H}\kern-0.3em\textit{om}}
\DeclareMathOperator{\lHom}{\mathscr{H}\kern-0.3em\textit{om}}
\DeclareMathOperator{\End}{End}
\DeclareMathOperator{\pr}{pr}
\renewcommand{\geq}{\geqslant}
\renewcommand{\ge}{\geqslant}
\renewcommand{\leq}{\leqslant}
\renewcommand{\le}{\leqslant}
\declaretheoremstyle[spaceabove = 3pt, spacebelow = 3pt, bodyfont = \itshape]{theorem}
\declaretheoremstyle[spaceabove = 3pt, spacebelow = 3pt]{remark}
\declaretheorem[style=theorem, numberwithin=section]{theorem}
\declaretheorem[style=theorem, sibling=theorem]{corollary}
\declaretheorem[style=theorem, sibling=theorem]{conjecture}
\declaretheorem[style=theorem, sibling=theorem]{lemma}
\declaretheorem[style=theorem, sibling=theorem]{problem}
\declaretheorem[style=theorem, sibling=theorem]{proposition}
\declaretheorem[style=theorem, sibling=theorem]{question}
\declaretheorem[style=theorem, sibling=theorem]{criterion}
\declaretheorem[style=remark, sibling=theorem]{definition}
\declaretheorem[style=remark, sibling=theorem]{example}
\declaretheorem[style=remark, sibling=theorem]{remark}
\declaretheorem[style=remark, sibling=theorem]{situation}
\declaretheorem[style=remark, sibling=theorem]{notation}
\declaretheorem[style=theorem, numberwithin=section, title=Theorem]{alphatheorem}
\declaretheorem[style=theorem, sibling=alphatheorem, title=Conjecture]{alphaconjecture}
\crefname{alphatheorem}{Theorem}{Theorems}
\crefname{alphaconjecture}{Conjecture}{Conjectures}
\crefname{alphacorollary}{Corollary}{Corollaries}
\crefname{alphaproposition}{Proposition}{Propositions}
\newcommand*{\isoarrow}[1]{\arrow[#1,"\rotatebox{90}{\(\sim\)}"
]}
\tikzset{commutative diagrams/.cd,
mysymbol/.style={start anchor=center,end anchor=center,draw=none}}
\newcommand\MySymb[2][\square]{%
\arrow[mysymbol]{#2}[description]{#1}}
\tikzset{
  shift up/.style={
    to path={([yshift=#1]\tikztostart.east) -- ([yshift=#1]\tikztotarget.west) \tikztonodes}
  }
}
\DeclareMathAlphabet{\mathpzc}{OT1}{pzc}{m}{it}
\newcommand*{\defeq}{\mathrel{\vcenter{\baselineskip0.5ex \lineskiplimit0pt
  \hbox{\scriptsize.}\hbox{\scriptsize.}}}%
=}
\def\SL@eqntext#1{\rlap{\quad{\showlabelsetlabel{\SL@prlabelname{#1}}}}}
\DeclareMathOperator{\cdec}{\mathcal{DEC}}
\DeclareMathOperator{\dec}{\mathsf{DEC}}
\DeclareMathOperator{\sod}{\mathsf{SOD}}
\DeclareMathOperator\MOR{\mathsf{MOR}}
\DeclareMathOperator{\Sh}{\mathsf{Sh}}
\newcommand{\Auteq}{\operatorname{Auteq}}
\newcommand{\Pic}{\operatorname{Pic}}
\newcommand{\Br}{\operatorname{Br}}
\newcommand{\Schet}[1]{\left( \Sch _{ #1 } \right) _{ \Et } }
\newcommand{\Affet}[1]{\left( \Aff _{ #1 } \right) _{ \Et } }
\newcommand{\sodl}[1]{\mathsf{SOD} ^{ #1 }}
\newcommand{\ntsodl}[1]{\mathsf{ntSOD} ^{ #1 }}
\newcommand{\excl}[1]{\mathsf{Exc}^{#1}}
\newcommand{\am}{\mathop{\mathsf{am}}\nolimits}
\newcommand{\Presheaf}{\mathsf{PSh}}
\newcommand{\id}{\operatorname{id}}
\newcommand{\simto}{\,\widetilde{\to}\,}
\newcommand{\cA}{\ensuremath{\mathcal{A}}}
\newcommand{\cB}{\ensuremath{\mathcal{B}}}
\newcommand{\cC}{\ensuremath{\mathcal{C}}}
\newcommand{\cD}{\ensuremath{\mathcal{D}}}
\newcommand{\cE}{\ensuremath{\mathcal{E}}}
\newcommand{\cF}{\ensuremath{\mathcal{F}}}
\newcommand{\cI}{\ensuremath{\mathcal{I}}}
\newcommand{\cL}{\ensuremath{\mathcal{L}}}
\newcommand{\cO}{\ensuremath{\mathcal{O}}}
\newcommand{\cT}{\ensuremath{\mathcal{T}}}
\newcommand{\cV}{\ensuremath{\mathcal{V}}}
\newcommand{\cX}{\ensuremath{\mathcal{X}}}
\newcommand{\cY}{\ensuremath{\mathcal{Y}}}
\newcommand{\cZ}{\ensuremath{\mathcal{Z}}}
\newcommand{\ttilde}{\tilde{t}}
\newcommand{\bA}{\ensuremath{\mathbb{A}}}
\newcommand{\bC}{\ensuremath{\mathbb{C}}}
\newcommand{\bG}{\ensuremath{\mathbb{G}}}
\newcommand{\bP}{\ensuremath{\mathbb{P}}}
\newcommand{\bZ}{\ensuremath{\mathbb{Z}}}
\newcommand{\bfk}{\mathbf{k}}
\definecolor{caribbeangreen}{rgb}{0.0, 0.8, 0.6}
\newcommand\pieter[1]{
  \begin{color}{caribbeangreen}\textbf{Pieter}: #1
\end{color}}
\newcommand\andrea[1]{
  \begin{color}{blue}\textbf{Andrea}: #1
\end{color}}
\definecolor{brightpink}{rgb}{1.0, 0.0, 0.5}
\title[Moduli of semiorthogonal decompositions]{Moduli spaces of semiorthogonal decompositions in families}
\author[Pieter Belmans \and Shinnosuke Okawa \and Andrea T.\ Ricolfi]{Pieter Belmans \and Shinnosuke Okawa \and Andrea T.\ Ricolfi}
\thanks{version from: \today}
\begin{document}

\begin{abstract}
  To a smooth and proper morphism~$\cX\to U$ with quasicompact semiseparated target
  we associate a sheaf in the \'etale topology, which takes an affine $U$-scheme $V$ to the set
  of $V$-linear semiorthogonal decompositions (of fixed length)
  of the category $\Perf \cX_{V}$.
  We use Artin's criterion to prove that, when $U$ is excellent, this is in fact an algebraic space
  which is moreover \'etale (though in general non-quasicompact and non-separated) over $U$.
  We moreover generalise the construction of the sheaf to families of geometric noncommutative schemes in the sense of Orlov.
  We also define a subfunctor classifying nontrivial semiorthogonal decompositions,
  and conjecture it is an open and closed subspace.

  Along the way,
  we prove that
  for a smooth and proper family of schemes,
  a semiorthogonal decomposition of the bounded derived category of coherent sheaves of a fibre
  uniquely deforms over an \'etale neighbourhood of the point.
\end{abstract}

\maketitle

{
  \hypersetup{linkcolor=black}
  \setcounter{tocdepth}{2}
  \tableofcontents
}

\section{Introduction}

The aim of this paper is to study the behaviour of semiorthogonal decompositions
in smooth and proper families of varieties.
A survey of recent results in this context is provided by
Kuznetsov's 2022 ICM address \cite{MR4680279}.
The main result of this paper is
the construction of a moduli space for semiorthogonal decompositions,
and a description of its geometric properties.

Semiorthogonal decompositions provide a fundamental method to understand the structure of (enhanced) triangulated categories,
in particular in the setting of algebraic geometry and noncommutative algebra.
In the absolute setting,
this idea goes back to Beilinson's description of the derived category of~$\mathbb{P}^n$
using an exceptional collection \cite{MR0509388}.
It was generalised in \cite{MR0992977,MR1039961,alg-geom/9506012}
to allow semiorthogonal decompositions with more complicated components.

\subsection{Context and motivation}
Before we describe the main results in \cref{subsection:results},
we will first discuss a prototypical example from birational geometry
which illustrates the expected behaviour of semiorthogonal decompositions in smooth families,
and briefly survey their role in algebraic geometry.

\subsubsection{\textbf{The geometry of} \texorpdfstring{$(-1)$}{-1}\textbf{-curves}}
\label{sec:-1curves}
By Castelnuovo's contraction theorem,
a $(-1)$-curve on a surface is the exceptional curve of the blowup of another surface at a smooth point,
and hence can be contracted.
This fact constitutes the foundation of the birational classification of algebraic surfaces.
A classical example in this setting is that of a cubic surface: over an algebraically closed field,
it contains precisely~27~lines, and these are all the~$(-1)$-curves on it.

If we consider a versal \emph{family}~$\mathcal{X}\to U$ of smooth cubic surfaces (over an algebraically closed field),
we can upgrade the set of~$(-1)$-curves to a \emph{moduli space of lines in the fibres} of the family,
or relative Fano scheme of lines,
which will be denoted by
\begin{equation}
  \label{equation:fano-scheme-of-lines}
  \begin{tikzcd}
    \mathcal{F}\arrow{r} & U.
  \end{tikzcd}
\end{equation}
This is a finite \'etale cover of schemes, of degree~27, reflecting the~27~lines on a cubic surface.
A celebrated classical result is that the monodromy group of this covering space is
a subgroup of the Weyl group of type~$\mathrm{E}_6$,
which is responsible for all the different realisations of a cubic surface as blowups of~$\mathbb{P}^2$
in 6 points in general position \cite{MR0552521}.

As the structure sheaf of a $(-1)$-curve is an exceptional object of the bounded derived category of coherent sheaves of a surface,
it induces a nontrivial semiorthogonal decomposition.
The covering space \eqref{equation:fano-scheme-of-lines} will therefore be naturally included as
a connected component of the moduli space of semiorthogonal decompositions associated to the family~$\cX\to U$, after a potential \'etale base change of $U$ killing the monodromy.
We will come back to this point in \Cref{rmk:comments}\ref{item:not-zariski} and more extensively in \Cref{section:del-pezzo-families}.

\subsubsection{\textbf{Semiorthogonal decompositions in algebraic geometry}}
The study of $(-1)$-curves
is a precursor of the minimal model program.
The DK-hypothesis \cite{MR1949787} asserts that there is a close relationship between operations in this program,
and semiorthogonal decompositions of triangulated categories.
It claims that,
given a birational map~$X\dasharrow Y$ between smooth projective varieties~$X$ and~$Y$,
and an equality~$\mathrm{K}_X=\mathrm{K}_Y$
(resp.~inequality~$\mathrm{K}_X>\mathrm{K}_Y$) of canonical divisors,
there is an equivalence of categories~$\derived^\bounded(X)\simeq\derived^\bounded(Y)$
(resp.~a semiorthogonal decomposition of~$\derived^\bounded(X)$ in terms of~$\derived^\bounded(Y)$ and some complement).

A typical example of a birational map in the minimal model program is a divisorial contraction (resp.~a flip, or a flop).
Orlov's paper \cite{MR1208153} studied the case of smooth blowups and projective bundles,
thereby initiating the whole subject.
The behaviour of derived categories for standard flips and flops
is studied in~\cite{alg-geom/9506012}.
See \cite{MR2483950} for an older survey on further developments,
and \cite{MR4524222,MR3918435,MR4362776} for recent developments.

\medskip

There also exist semiorthogonal decompositions which do not originate from the minimal model program.
A typical example is the \emph{Kuznetsov component} for a Fano variety $X$ of index~$\mathrm{i}_X$,
which appears as the semiorthogonal complement of the exceptional collection~$\cO_X, \cO_X(1),\ldots,\cO_X(\mathrm{i}_X-1)$
in~$\derived^\bounded(X)$.
In \cite{alg-geom/9506012} the example of the intersection of two quadrics was studied;
its geometry is controlled by the geometry of the associated hyperelliptic curve
appearing as the Kuznetsov component.
Similarly, the relationship between the geometry of cubic fourfolds and the properties of its associated Kuznetsov category
are a topic of current interest \cite{MR4292740,MR2605171}.

The notion of semiorthogonal decomposition also plays an important role in other areas,
such as matrix factorisations \cite{MR2101296}, or Fukaya-type categories.
They also play a prominent role in mirror symmetry,
by virtue of the generalisation of Dubrovin's conjecture to semiorthogonal decompositions given in \cite{MR4105948}.

\subsection{Results}
\label{subsection:results}
We construct a moduli space for semiorthogonal decompositions (of fixed length) and describe its geometry,
for arbitrary families of smooth proper varieties
(and more generally geometric noncommutative schemes as defined in \cite{MR3545926}).

\begin{alphatheorem}[{\Cref{proposition:description-sod-f}} and \Cref{theorem:sod-f}]
  \label{theorem:main-intro}
  Let~$U$ be a quasicompact and semiseparated scheme,
  and let~$f\colon\cX\to U$ be a smooth and proper morphism of schemes. Fix an integer $\ell \geq 2$. There exists a sheaf~$\sod_f^{\ell}$ on the big \'etale site $\Schet{U}$, such that
  \begin{enumerate}
    \item
      \label{enumerate:main-intro-values}
      for all quasicompact and semiseparated~$U$-schemes~$V$, there is a natural bijection
      \begin{equation}
        \label{equation:values}
        \sod_f^{\ell}(V\to U) \cong
        \Set{
          \begin{array}{c}
            \text{$V$-linear~semiorthogonal} \\
            \text{decompositions~}\Perf \cX_V = \langle \cA^{1},\ldots,\cA^{\ell} \rangle
          \end{array}
        };
      \end{equation}
    \item
      \label{enumerate:main-intro-etale}
      if~$U$ is moreover an excellent scheme,\footnote{
        Excellent rings are defined in
        \cite[\href{https://stacks.math.columbia.edu/tag/07QT}{Tag 07QT}]{stacks-project},
        and a scheme is excellent if there exists an affine open cover by spectra of excellent rings.
        It is a strengthening of the notion of a noetherian ring,
        particularly well-behaved with respect to completion.
        Many rings are excellent,
        cf.~\cite[\href{https://stacks.math.columbia.edu/tag/07QW}{Tag 07QW}]{stacks-project},
        in particular, schemes of finite type over a field are excellent.
      }
      then~$\sod_f^{\ell}$ is an algebraic space
      which is \'etale over~$U$.
  \end{enumerate}
\end{alphatheorem}

The $V$-linearity condition means that the subcategories $\mathcal{A}^{i} \subseteq \Perf \cX_{V}$
are closed under tensor products with perfect complexes pulled back from $V$,
which is one of the essential technical conditions ensuring
the existence of base change for semiorthogonal decompositions (cf.~\Cref{section:base-change}).

The proof of \Cref{theorem:main-intro} consists of checking Artin's axioms
characterising algebraic spaces that are \'etale over the base,
which we take in the form of the following criterion,
corresponding to
the algebraic space version of \cite[Theorem~11.3]{MR3951580},
and originally to \cite[Th\'eor\`eme VII.7.2]{MR0407011}.

\begin{criterion}[Artin's criterion for an \'etale algebraic space]
  \label{criterion:hall-rydh}
  Let $U$ be an excellent scheme.
  Let $\mathsf{P}$ be a presheaf over $\Sch_U$.
  Then $\mathsf{P}$ is an algebraic space which is \'etale over $U$
  if and only if it satisfies the following conditions:
  \begin{enumerate}
    \item
      \label{item:etale-sheaf}
      $\mathsf{P}$ is a sheaf on the big \'etale site $(\Sch_U)_{\Et}$.
    \item
      \label{item:limit-preserving}
      $\mathsf{P}$ is limit-preserving.
    \item
      \label{item:DEF}
      Let $(R,\mathfrak m)$ be a local noetherian ring which is $\mathfrak m$-adically complete,
      with structure morphism $\Spec R \to U$,
      such that the induced morphism $\Spec R/\mathfrak m \to U$ is of finite type.
      Then the canonical map
      \begin{equation}
        \begin{tikzcd}
          \mathsf{P}(\Spec R\to U) \arrow{r} &  \mathsf{P}(\Spec R/\mathfrak m\to U)
        \end{tikzcd}
      \end{equation}
      is bijective.
  \end{enumerate}
\end{criterion}

This criterion gives us a roadmap for the proof of \Cref{theorem:main-intro}.
The base change results we obtain in \Cref{section:base-change}
(generalising Kuznetsov's base change theorem \cite[Proposition~5.1]{MR2801403})
allow us to introduce the \emph{presheaf} of semiorthogonal decompositions in \Cref{definition:presheaf-sod-f}.
We settle \cref{item:etale-sheaf} of \Cref{criterion:hall-rydh}
by invoking \cite[Theorem 1.4]{MR4205113} (cf.~\Cref{theorem:sod-f-fpqc-sheaf}).
The description \eqref{equation:values} is proved in \Cref{proposition:description-sod-f}.
The fact that $\sod_f^{2}$ is limit-preserving (cf.~\Cref{definition:limit-preserving}) is proved in \Cref{theorem:sod-f-lfp},
which confirms \Cref{item:limit-preserving} above, limited to the case $\ell=2$.
The uniqueness of deformations of semiorthogonal decompositions, cf.~\Cref{item:DEF},
is \Cref{theorem:deforming-sods}, again in the case $\ell=2$. An inductive argument concluding the proof of \Cref{theorem:main-intro} for an arbitrary $\ell$ is given, finally, in
\Cref{theorem:sod-f}.
\medskip

Combining \Cref{theorem:sod-f-lfp} and \Cref{theorem:deforming-sods}
with a version of the Artin approximation theorem (see \Cref{theorem:artin-approximation}),
we obtain along the way the following result of independent interest,
which says that semiorthogonal decompositions in a fibre uniquely deform to the total space of a smooth family.

\begin{alphatheorem}[{\Cref{corollary:geometric-deformation}}]
  \label{theorem:geometric-deformation}
  Let~$U$ be a semiseparated scheme,
  which is moreover of finite type over a field~$\bfk$,
  and let~$f\colon\cX\to U$ be a smooth and proper morphism of schemes.
  Fix a $\bfk$-valued point~$0\in U(\bfk)$
  and denote~$X=f^{-1}(0)$.
  Assume that the derived category of $X$ admits a $\bfk$-linear semiorthogonal decomposition
  \begin{equation}
    \label{equation:sod-central-fibre-introduction}
    \derived^\bounded(X) = \braket{\cA,\cB}.
  \end{equation}
  Then, shrinking $U$ to an \'etale neighbourhood of $0$ if necessary,
  there exists a unique $U$\dash linear semiorthogonal decomposition
  \begin{equation}
    \Perf \Xcal = \braket{\cA_U,\cB_U}
  \end{equation}
  whose base change to $\Spec \bfk(0) \to U$ is the initial semiorthogonal decomposition \eqref{equation:sod-central-fibre-introduction}.
\end{alphatheorem}
In the main body of the text we state a more general version of this result, cf.~\Cref{corollary:geometric-deformation},
only requiring the local ring $\mathcal O_{U,0}$ to be a G-ring.

\begin{remark}
  \label{rmk:comments}
  A few comments are in order:
  \begin{enumerate}
    \item \Cref{enumerate:main-intro-etale} in \Cref{theorem:main-intro} does not extend at all to
      the unbounded derived category of quasicoherent sheaves,
      see \Cref{example:topological-sod},
      so the properness condition plays a significant role.
    \item Also, \Cref{theorem:main-intro} fails in general without the smoothness assumption,
      see \Cref{example:cannot-omit-smoothness}.
      When we drop the smoothness assumption on $ f \colon \cX \to U $,
      the structure sheaf of the diagonal~$\cO_{ \Delta_f }$
      is no longer a perfect complex,
      and this is an important property to study
      the moduli space of semiorthogonal decompositions,
      cf.~\cref{section:sod-is-dec}.

      It would be interesting to see what can be said if we instead consider
      semiorthogonal decompositions of~$\derived^\bounded(\coh \cX)$,
      when~$\cX\to U$ is no longer smooth.
    \item It is not clear whether or not the moduli space $\sod_f^{\ell}$ is in fact a scheme in general.
      As pointed out in \Cref{eg:OU} it can happen that
      the structure morphism $\sod_f^{\ell} \to U$ is neither separated nor quasicompact,
      even after restriction to a single connected component.
      Separatedness is a common ingredient in criteria such as \cite[Corollary~II.6.17]{MR0302647}
      to check whether an algebraic space is in fact a scheme.
      This nonseparatedness is a new phenomenon,
      which is not present in the example of~$(-1)$-curves,
      as the relative Fano scheme is proper over the base
      (cf.~\Cref{section:del-pezzo-families}).

      We do, however, prove that $\sod_{f}^{\ell}$ is a \emph{quasi}separated locally noetherian algebraic space,
      with dense open schematic locus, cf.~\Cref{proposition:quasi-separated}.
    \item
      \label{item:valuative} Even though $\sod_f^{\ell}$ is not separated,
      in \Cref{question:valuative} we ask\footnote{
        During the revision of this paper,
        a negative answer to \cref{question:valuative} appeared in the work of Wu, see \cite{Weimufei}.
        We kept \Cref{question:valuative} as originally formulated,
        as it was the motivating question for op.~cit.
      }
      whether the existence part of the valuative criterion for properness is still satisfied for our moduli space.
      This is the second half of the criterion,
      and in fact holds for $(-1)$-curves in the context of \Cref{sec:-1curves},
      as the relative Fano scheme is proper over the base.
    \item
      \label{item:not-zariski}
      One cannot take the neighbourhood in \cref{theorem:geometric-deformation}
      to be Zariski open in general, since there can be monodromy,
      as we observed in the cubic surface example of \Cref{sec:-1curves}.
      Another instance of this phenomenon is given by \Cref{example:zariski-open-not-possible} below.
  \end{enumerate}
\end{remark}

\begin{example}
  \label{example:zariski-open-not-possible}
  Take~$U = \Spec \bfk [ a, a^{ - 1 } ]$,
  and consider the following family of smooth quadric surfaces
  \begin{equation}
    \begin{tikzcd}
      f \colon
      \cX
      =
      (xy + z^2-aw^2 = 0)
      \subset
      \mathbb{P}^3_{x:y:z:w} \times U
      \arrow{r}{\pr_2} &
      U.
    \end{tikzcd}
  \end{equation}
  Consider the fibre~$\cX_1$
  at~$a=1$,
  and the divisor~$D_1 \defeq \left(x = z - w = 0\right) \subset \cX_1$.
  If the exceptional object~$\cO_{\cX_1}(D_1)$
  extends to an object of~$\Perf \cX$
  which is exceptional relative to~$U$,
  then it has to be a line bundle on~$\cX$,
  since every geometric fibre of~$f$ is isomorphic to~$\bP^1\times\bP^1$
  and every exceptional object of rank~$1$ on it is known to be a line bundle \cite[Propositions~2.9 and~2.10]{MR1286839}.

  However, no line bundle on~$\cX$ restricts to~$\cO_{\cX_1}(D_1)$.
  To see this, consider the \emph{irreducible} divisor~$D \defeq \cX \cap \left(x = 0\right) \subset \cX$.
  As the divisor class group of~$\cX \setminus D \cong \bA^2 \times U$
  is trivial,
  it follows that every line bundle on~$\cX$ is proportional to~$\cO_{\cX}(D)$.
  But no power of~$\cO_{\cX}(D)|_{\cX_1}$
  is isomorphic to~$\cO_{\cX_1}(D_1)$.
  Geometrically speaking,
  this comes from the fact that the monodromy around the puncture of~$U$
  exchanges the two rulings of the surface~$\cX_1\cong\bP^1\times\bP^1$.

  On the other hand, base-changing the family by the \'etale double cover given by~$a=b^2$,
  one can kill the monodromy and obtain the line bundle on~$\cX$ given by the divisor
  \begin{equation}
    \left( x = z - b w = 0 \right) \subset \cX,
  \end{equation}
  which does restrict to~$\cO_{\cX_1}(D_1)$.
\end{example}

Alternatively, one can consider a conic bundle which has only smooth fibres but which is not Zariski-locally trivial:
all fibres are isomorphic to~$\mathbb{P}^1$, but the existence of a relative exceptional collection
would force the conic bundle to be Zariski-locally trivial
(see also \cite[Remark~3.8]{1805.04050v4} for the higher-dimensional situation).

\subsection{Relation to earlier works}
The easiest semiorthogonal decompositions are those induced by full exceptional collections.
All the subcategories are then equivalent to~$\derived^\bounded(\bfk)$.
The deformation theory of exceptional objects goes back a long way:
it provided the intuition and inspiration for the results in \cite{MR1230966},
using the expectation that exceptional objects uniquely lift to deformations.
This intuition was made precise for infinitesimal deformations of abelian categories
(and their derived categories) in \cite{MR2175388},
and for formal deformations of abelian categories
(and their derived categories) in \cite{MR3050709}.

In a strictly geometric context these liftability results are explained by
the deformation theory results in \cite{MR1966840,MR2177199}.
More recently, in \cite[Theorem~3.5]{1805.04050v4}
(see \Cref{remark:hu-comparison})
and \cite{galkin-shinder-exceptional}
it was shown that \'etale-locally in a family
one can extend exceptional collections.
\Cref{theorem:geometric-deformation} is a generalisation of this property
to semiorthogonal decompositions.

Another precursor to \Cref{theorem:geometric-deformation},
not just for exceptional collections,
was studied in \cite{MR3764066}.
It takes as the base~$U$ the moduli stack~$\mathcal{M}_g$ of genus~$g$ curves,
and the family~$\mathcal{X}$
is the relative moduli space of rank~2 vector bundles with odd determinant,
and the point~$0\in U$ is any point in the hyperelliptic locus.
The semiorthogonal decomposition of the fibre in~$0$, corresponding to a hyperelliptic curve $C$,
is the decomposition given by the universal vector bundle,
which is shown to give
a fully faithful functor~$\derived^\bounded(C)\hookrightarrow\derived^\bounded(\mathrm{M}_C(2,\mathcal{L}))$
when~$C$ is hyperelliptic using an explicit geometric construction of the moduli space.
\Cref{theorem:geometric-deformation} generalises this example to arbitrary families
and arbitrary semiorthogonal decompositions
where the components are not necessarily derived categories themselves.

\subsection{Amplifications and conjectures}
\label{subsection:amplifications-intro}
Having constructed $\sod_f^{\ell}$,
it is possible to bootstrap, and construct
the moduli space $\sodl{\ell}_{\cB}$ of semiorthogonal decompositions
of a given $U$-linear admissible subcategory $\cB\subseteq\Perf\cX$,
such that~$\sodl{\ell}_f=\sodl{\ell}_{\Perf\cX}$.
This brings us in the setup of Orlov's geometric noncommutative schemes \cite{MR3545926},
and it is worked out in \cref{subsection:relative-to-subcategory}.

The generalisation of \Cref{theorem:main-intro} to $\sodl{\ell}_{\cB}$ given in \Cref{corollary:subcategory-space},
working relative to a subcategory~$\cB\subseteq\Perf\cX$,
suggests the following conjecture
for arbitrary smooth and proper dg categories (and not just geometric dg categories).

\begin{alphaconjecture}
  \label{conjecture:smooth-proper-dg-family}
  Let~$\mathcal{D}$ be a family of smooth and proper dg~categories over a
  quasicompact, semiseparated, excellent scheme~$U$.
  Let $\ell \geq 2$ be an integer.
  Then there exists an \'etale algebraic space $ \sod^{\ell}_{ \cD / U} \to U $ with a functorial bijection
  \begin{equation}
    \sod^{\ell}_{ \cD / U } ( V \to U ) \cong
    \Set{
      \begin{array}{c}
        \text{$V$-linear~semiorthogonal} \\
        \text{decompositions~}\Perf  \cD_V = \langle \cA^{1},\ldots,\cA^{\ell} \rangle
      \end{array}
    }
  \end{equation}
  for quasicompact and semiseparated schemes $V \to U$.
\end{alphaconjecture}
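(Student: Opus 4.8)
The plan is to verify, for the functor $\phi\mapsto\{V\text{-linear semiorthogonal decompositions of }\Perf(\cD_V)\}$ on the big affine \'etale site of $U$, the three axioms of Artin's criterion for algebraic spaces \'etale over the base in the form of Criterion~\ref{Hall_Rydh_Criterion}, following verbatim the strategy by which Theorem~\ref{thm:algebraic_space_intro} is proved, and then to define $\sod_{\cD/U}$ as the associated sheaf on the big \'etale site. The first step, the \'etale (indeed fppf) sheaf property, is \emph{already} available at this level of generality: rather than going through Elagin's descent as in the body of the paper (which forces a characteristic-zero hypothesis), one invokes the fppf descent of semiorthogonal decompositions for arbitrary families of enhanced/dg categories of \cite[Theorem~1.4]{antieau-elmanto}, cf.\ Remark~\ref{remark:antieau-elmanto}.

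The remaining two axioms rest on a dg reincarnation of the comparison theorem used in the paper: for a smooth and proper $\cO_U$-linear dg category $\cD$ and a qcqs semiseparated $\phi\colon V\to U$, a $V$-linear semiorthogonal decomposition $\Perf(\cD_V)=\langle\cA,\cB\rangle$ is the same datum as a decomposition triangle
\[
  \cP_\cB\longrightarrow(\cD_V)_\Delta\longrightarrow\cP_\cA\longrightarrow\cP_\cB[1]
\]
of the diagonal bimodule in $\Perf(\cD_V\otimes_V\cD_V^{\mathrm{op}})$, where $\cP_\cA,\cP_\cB$ are the mutually orthogonal idempotent kernels of the two projection functors; smoothness guarantees that $(\cD_V)_\Delta$ and these kernels are perfect bimodules, properness that the corresponding convolution functors preserve $\Perf$, and the whole package is functorial in $V$ by derived base change of bimodules (this is where $V$-linearity, playing the role it does in Theorem~\ref{thm:algebraic_space_intro}, enters). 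Granting this, the locally-of-finite-presentation axiom reduces, exactly as in Theorem~\ref{thm:SOD_is_LFP}, to the statement that the moduli of objects and of morphisms in $\cD^{\mathrm{op}}\otimes_U\cD$, together with the closed conditions of idempotency and mutual orthogonality, is locally of finite presentation over $U$; this follows from Toën--Vaquié-type representability for the derived moduli of objects in a dg category of finite type, smoothness-plus-properness of $\cD$ over $U$ being precisely the finite-type hypothesis. For the deformation/effectivity axiom one fixes a complete local noetherian $B$ over $U$ with residue field $\bfk$ and a decomposition triangle of $(\cD_\bfk)_\Delta$, notes that $(\cD_B)_\Delta$ is the canonical lift of $(\cD_\bfk)_\Delta$ over $B$, and applies a dg reformulation of the deformation theory of morphisms with a fixed lift of the target from Appendix~\ref{appendix} to the map $\cP_\cB\to(\cD_\bfk)_\Delta$: the obstruction to, and the indeterminacy in, lifting across an Artinian square-zero extension live in $\RRHom$-groups of the mixed type between $\cP_\cA$ and $\cP_\cB$ tensored with the relevant ideal, and semiorthogonality of the base change forces these to vanish in the relevant degrees, so that the lift exists, is unique, its cone is automatically $\cP_\cA$, and idempotency and orthogonality persist by Nakayama; passing from the resulting formal system over the $B/\mathfrak m^n$ to an object over $B$ is then effectivity, which in the scheme case is Grothendieck existence for the bounded coherent kernels on the proper $\cX_B\times_B\cX_B$ and in general is formal-GAGA for perfect bimodules over the (proper) $\cD_B\otimes_B\cD_B^{\mathrm{op}}$. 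Combining the three axioms, Criterion~\ref{Hall_Rydh_Criterion} yields that $\sod_{\cD/U}$ is an algebraic space \'etale over $U$ with the asserted functor of points.

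The main obstacle is foundational rather than strategic: one must first pin down a workable notion of ``family of smooth and proper dg categories over $U$'' --- say a sheaf of $\cO_U$-linear dg categories, or a dualizable $\cO_U$-linear dg category in a suitable symmetric monoidal $\infty$-categorical sense --- for which the diagonal bimodule, base change of semiorthogonal decompositions, and perfectness of the projection kernels all make sense and are stable under pullback; this is routine in the geometric setting treated in the paper but in general requires the derived Morita theory of To\"en and the derived algebraic geometry of To\"en--Vezzosi. The second genuine difficulty is the deformation-theoretic axiom: Appendix~\ref{appendix} is written for morphisms of complexes of sheaves on schemes, so one must either reprove its obstruction-theoretic statements directly for morphisms of dg-bimodules, or identify $\Perf(\cD^{\mathrm{op}}\otimes_U\cD)$ with perfect complexes on a (derived, possibly non-geometric) $U$-scheme and transport the appendix's results across, all the while checking that the semiorthogonality vanishing which annihilates obstructions in the scheme case survives literally. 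Once these points are settled, the argument is a line-by-line transcription of the proof of Theorem~\ref{thm:algebraic_space_intro}.
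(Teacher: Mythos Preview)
The statement you are attempting to prove is labelled in the paper as a \emph{conjecture}, and the paper does not supply a proof. Immediately after stating it the authors write that the case $\cD=\Perf\cX$ is Theorem~\ref{thm:algebraic_space_intro}, and that ``proving it will likely require some techniques from derived algebraic geometry, as in \cite{antieau-elmanto}, and is related to the construction of moduli of objects in dg categories from \cite{MR2493386}.'' There is therefore nothing in the paper to compare your argument against.

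What you have written is not a proof but a reasonable strategy sketch, and you are candid about this: you explicitly flag the two places where real work remains (a workable notion of family of smooth proper dg categories over $U$ with well-behaved diagonal bimodule and base change, and a dg-bimodule version of the deformation theory of Appendix~\ref{appendix} together with formal GAGA for perfect bimodules). Your outline does track the paper's own suggestion---invoke \cite{antieau-elmanto} for the sheaf axiom, then redo the $\dec_{\Delta_f}$ comparison and the obstruction calculus at the level of bimodules over $\cD^{\mathrm{op}}\otimes_U\cD$---and the paper's pointer to \cite{MR2493386} matches your appeal to To\"en--Vaqui\'e for limit preservation. But precisely because these gaps are the content of the conjecture, calling this a proof would be premature: you have correctly identified what needs to be done, not done it.
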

The case where~$\mathcal{D}=\Perf\cX$ (considering the necessarily unique dg enhancement) is \Cref{theorem:main-intro}.
The case where~$\mathcal{D}\subseteq\Perf\cX$ (considering any dg enhancement) is \cref{corollary:subcategory-space}.
Note that the constructions are independent of the choice of enhancement,
because~$\Perf\cD$ is considered as a triangulated category.

Proving \Cref{conjecture:smooth-proper-dg-family} will likely require some techniques from derived algebraic geometry,
as in \cite{MR4205113},
and it is related to the construction of the moduli space of objects in a dg category from \cite{MR2493386}.
More generally one would like to take the base~$U$
in \Cref{conjecture:smooth-proper-dg-family}
not necessarily an excellent scheme,
and merely a sufficiently nice algebraic space, or even an algebraic stack.

\medskip

A second conjecture,
or rather, a pair of related conjectures,
is discussed in \Cref{subsection:nontrivial}.
It concerns a subfunctor of~$\sod_f^\ell$
(or~$\sod_{\cB}^\ell$)
parametrising only \emph{nontrivial} semiorthogonal decompositions,
defined functorially by \Cref{def:ntsod}:
\begin{itemize}
  \item In \Cref{conjecture:ntsod-is-etale}
    we conjecture that this subfunctor is in fact an open and closed algebraic subspace of~$\sod_f^{\ell}$.
    This would confirm the expectation that nontriviality of semiorthogonal decompositions
    is a Zariski-open condition in a smooth and proper family.
  \item In \Cref{conjecture:rouquier-for-local-noetherian}
    we phrase an equivalent conjecture for \Cref{conjecture:ntsod-is-etale},
    still within the context of semiorthogonal decompositions of triangulated categories.
    In \Cref{conjecture:dg-version} we propose an
    (a priori) even stronger dg~version of \Cref{conjecture:rouquier-for-local-noetherian}.
  \item In \Cref{conjecture:referee} we give a third equivalent description
    in terms of the properties of sections of the structure morphism.
\end{itemize}

\medskip

Finally,
the moduli spaces introduced so far are also the natural recipients of
an action by the auto-equivalence group,
resp.~the braid group (by mutation), as discussed in \Cref{subsection:group-actions}.
These group actions reflect many deep questions on properties of semiorthogonal decompositions,
such as the transitivity of the braid group action.

\subsection{Applications}
In \Cref{section:moduli} we also discuss various examples. The following applications will be discussed in this paper:
\begin{enumerate}
  \item In \Cref{proposition:field-of-definition}
    we deduce from \Cref{theorem:main-intro} a nontrivial assertion on
    ``the field of definition'' of semiorthogonal decompositions.
    This was pointed out by Alexander Efimov, and a simplified version can be stated as follows.
    Suppose that $X$ is a smooth proper variety defined over an algebraically closed field $\bfk$,
    and let $K/\bfk$ be a field extension.
    Then every semiorthogonal decomposition of $\derived^\bounded(X_K)$,
    where $X_K$ is the base change of $X$,
    is the base change of a semiorthogonal decomposition of $\derived^\bounded(X)$.
  \item In \Cref{remark:rigidity-of-autoequivalences}
    we deduce from \Cref{theorem:main-intro}
    the fact that semiorthogonal decompositions are rigid under the actions of topologically trivial autoequivalences,
    which is first shown in \cite{1508.00682v2}.
    The proof in \Cref{remark:rigidity-of-autoequivalences} is more conceptual than the original argument.
  \item
    In \Cref{section:del-pezzo-families} we expand on \Cref{sec:-1curves},
    by pinpointing the precise relationship between $\sod_f^\ell$ and the moduli of lines.
\end{enumerate}
In~\cite{2107.03051} the example of~$\mathbb{P}^1\times\mathbb{P}^1$
and its degeneration to the second Hirzebruch surface was discussed in detail, extending upon \Cref{eg:OU}.

\subsection{Relation to deformations of extremal contractions}
Suppose that $f \colon \cX \to U$
is a family of smooth projective varieties,
whose central fibre $\cX_0 = f^{-1}(0)$ admits an extremal contraction $g_0 \colon \cX_0 \to Y$
in the sense of the minimal model program.
The DK-hypothesis suggests that it should induce
a nontrivial semiorthogonal decomposition of $\derived^\bounded(\cX_0)$.
Our main result implies that, possibly after replacing $U$
with an \'etale neighbourhood of $ 0 \in U $,
it should deform to a $U$-linear semiorthogonal decomposition of $\Perf \cX$.
A naive guess here is that the extremal contraction $g_0$
should also deform  to a family of extremal contractions
$g\colon\cX\to\cY$ over $U$,
and that it corresponds to the deformation family of semiorthogonal decompositions via the DK-hypothesis.

Put differently, it is quite likely that one can associate to $f$
a moduli space of extremal contractions \'etale over $U$,
and promote the DK-hypothesis to a functorial morphism from that space to (an appropriate version of) $ \sod_f $.

In fact, for complex spaces it is shown in \cite[(12.3)]{MR1149195}
that extremal contractions \emph{do} deform.
Since the proof is based on a Kodaira-type vanishing theorem,
it is not clear if there are counterexamples in positive or mixed characteristics.
Taking into account that our main results are free of characteristics,
it is conceivable that such counterexamples (if any)
also could be counterexamples to the DK-hypothesis.
For example, note that one needs to use Kodaira vanishing
to prove that the structure sheaf of a Fano manifold in characteristic~$0$ is an exceptional object.

\subsection{Structure of the paper}
In \Cref{section:preliminaries} we collect various preliminaries on
derived categories of sheaves and semiorthogonal decompositions for the reader's convenience.
In \Cref{section:base-change} we discuss base change for semiorthogonal decompositions,
generalising Kuznetsov's results of \cite{MR2801403}.
An alternative discussion can be found in \cite[\S 3.2]{MR4292740}.
The main object of interest, namely the sheaf of semiorthogonal decompositions,
is introduced in \Cref{section:sod-functor-sheaf}.

To prove that this sheaf is an algebraic space,
which is moreover \'etale over the base,
we need to check two more conditions in \cref{criterion:hall-rydh}. The key step is the case of semiorthogonal decompositions of length $2$.
We give an alternative description of~$\sod_f^{2}$
which makes the deformation theory of semiorthogonal decompositions more explicit:
in \Cref{section:sod-is-dec} we introduce the presheaf of decompositions of the structure sheaf of the diagonal,
and show that this presheaf is isomorphic to the presheaf of semiorthogonal decompositions of length 2 (\Cref{theorem:Iso_Functors_SOD_DEC}).

Using this alternative description we show in \Cref{section:local-study} that these functors are limit-preserving.
In \Cref{section:deform_sod} we study the deformation theory of semiorthogonal decompositions,
and show that semiorthogonal decompositions for perfect complexes are unobstructed and have unique lifts.
This is now done using a proof suggested by the referee,
for the original approach (using the deformation theory of morphisms and the translation in \cref{section:sod-is-dec})
one is referred to \cite{former-appendix}.

In \Cref{section:moduli} we can then conclude from Artin's axioms that we indeed have an algebraic space which is \'etale over the base, completing the proof of \Cref{theorem:main-intro}.
We also discuss various examples showcasing how our results can be applied,
and why they cannot be extended to certain other settings.

In \cref{section:amplifications}
we give amplifications of our main construction.
First, we work relative to a fixed (possibly non-geometric) subcategory.
Next, we discuss a (conjectural) generalisation involving only nontrivial semiorthogonal decompositions.
As part of this discussion
we formulate a conjecture on the behaviour of dg categories over integral local noetherian rings
(see \Cref{conjecture:rouquier-for-local-noetherian} and \Cref{conjecture:dg-version}),
which would allow us to conclude as in \Cref{conjecture:ntsod-is-etale}
that there is an open and closed subspace of the moduli space $\sod_{f}^{\ell}$,
parametrising only nontrivial semiorthogonal decompositions.
Finally, we enhance the setup by equipping it with two interesting group actions,
one encoding mutation, and another induced by autoequivalences.

In \cref{section:del-pezzo-families} we revisit the example from \cref{sec:-1curves}.

\subsection*{Acknowledgements}
We want to thank Alexander Kuznetsov for the idea
that deformations of semiorthogonal decompositions can be studied
via the corresponding decomposition of the structure sheaf of the diagonal,
which was the starting point for our work.
We want to thank
Alexander Efimov for pointing out \Cref{proposition:field-of-definition},
Najmuddin Fakhruddin for pointing out \Cref{remark:fakhruddin},
and Isamu Iwanari for suggesting \Cref{example:topological-sod}.
We want to thank
Dmitry Kaledin, Sergey Galkin and Mauro Porta for interesting discussions.
We thank Federico Barbacovi for pointing out a mistake in an earlier version of the manuscript.
We want to thank both referees for the many useful comments and suggestions,
which have greatly improved the paper.
The positive impact one of the referees in particular has had on this paper cannot be overstated:
the alternative argument in \cref{subsection:lfp-using-generators}
and the argument currently used in \cref{subsection:extending-sods}
(with the original argument relegated to \cite{former-appendix})
are due to them.

The first author was partially supported by FWO (Research Foundation---Flanders)
during the initial phase of this project,
and by NWO (Dutch Research Council)
during the latter phase of this project,
as part of the grant \href{https://doi.org/10.61686/RZKLF82806}{doi:10.61686/RZKLF82806},
in the later stages.
The second author was partially supported by Grants-in-Aid for Scientific Research
(16H05994,
  16K13746,
  16H02141,
  16K13743,
  16K13755,
  16H06337,
  19KK0348,
  20H01797,
  20H01794,
  21H04994,
23H01074)
and the Inamori Foundation.
Finally, we want to thank the Max Planck Institute for Mathematics (Bonn) and SISSA for the pleasant working conditions.%

\section{Preliminaries}
\label{section:preliminaries}

First we recall some notation and foundational results.
\subsection{Schemes and derived categories of sheaves}
\label{sec:prelim-on-schemes}

Given a scheme $U$, we shall denote by $\Sch_U$ (resp.~$\Aff_U$) the category of schemes over $U$ (resp.~affine schemes over $U$). We denote by $\Alg_U \subset \Aff_U^{\op}$ the category whose objects are pairs $(A,h)$ where $A$ is a ring and $h \colon \Spec A \to U$ is an \emph{affine} morphism. We call such objects $U$-algebras.

A morphism of schemes $f\colon \cX \to U$ is \emph{semiseparated}
if the diagonal morphism $\iota_{\Delta_f} \colon \cX \to \cX \times_U\cX$ is affine.
A scheme is semiseparated if it is semiseparated over $\Spec \mathbb Z$,
which happens if and only if the intersection of any two open affines is affine.
Throughout we will work with semiseparated schemes;
some of the results we cite or prove
work for the more general notion of quasiseparated schemes,
but others do not.

\medskip

For a scheme $ X $, let $ \Mod \cO_X $ be the category of $ \cO_X $-modules
and $ \Qcoh X\subseteq \Mod \cO_X $ be the subcategory of quasicoherent modules.
We denote by $ \coh X\subseteq \Mod \cO_X $ the subcategory of coherent $\Ocal_X$-modules.
All these categories are abelian, see \cite[\href{https://stacks.math.columbia.edu/tag/01AG}{Tag 01AG},
  \href{https://stacks.math.columbia.edu/tag/077P}{Tag 077P},
\href{https://stacks.math.columbia.edu/tag/01BY}{Tag 01BY}]{stacks-project}.

Let $\derived ( \cO_X ) \defeq \derived ( \Mod \cO_X ) $ be the unbounded derived category of $ \Mod \cO_X
$,
equipped with its natural triangulated structure.
For $\ast \in \set{+,-,\bounded,\emptyset}$, we let $\derived^\ast(\Ocal_X)$ denote the full subcategory of $\derived ( \cO_X )$,
where the decoration $\ast$ indicates the location of the (possibly) nonvanishing cohomology sheaves.
We use the same convention for $\derived^\ast(\Qcoh X)$ and $\derived^\ast(X) \defeq \derived^\ast(\coh X)$.
By $\derived_{\qcoh}^\ast(\Ocal_X)\subseteq \derived^\ast(\Ocal_X)$, resp.~$\derived_{\coh}^\ast(\Ocal_X) \subseteq \derived_{\qcoh}^\ast(\Ocal_X)$,
we denote the full triangulated subcategory of complexes with quasicoherent, resp.~coherent cohomology. Finally, we denote by $\Perf X\subseteq \derived_{\coh}(\Ocal_{\mathcal X})$ the triangulated category of perfect complexes over $X$.

\medskip

A rigorous treatment of derived functors and their compatibilities
in the context of unbounded complexes between the derived categories $\derived(\Ocal_X)$
was carried out by Spaltenstein \cite{MR0932640}, and we refer the reader to op.~cit.~for full details.

Let $f \colon X \to Y$ be a morphism of schemes. Whenever no ambiguity could arise, the functors $f^\ast$, $f_\ast$ will denote their derived version, namely $\mathbf L f^\ast$, $\RR f_\ast$. Similarly, we shall write $-\otimes-$ instead of $-\otimes^\LL-$, and we reserve the notation $\RRlHom_f(-,-)$ for the composite functor $\RR f_\ast \RRlHom_X(-,-)$.

If $f\colon X\to Y$ is a quasicompact and quasiseparated (e.g.~semiseparated) morphism of schemes, then by \cite[\href{https://stacks.math.columbia.edu/tag/08D5}{Tag~08D5}]{stacks-project}, the derived pushforward $f_\ast\colon \derived(\Ocal_X) \to \derived(\Ocal_Y)$ preserves quasicoherent cohomology,
and thus defines an exact functor $f_\ast \colon \derived_{\qcoh}(\Ocal_X)\to \derived_{\qcoh}(\Ocal_Y)$.
The derived pullback $f^\ast\colon \derived(\Ocal_Y) \to \derived(\Ocal_X)$ preserves quasicoherent cohomology (with no assumptions on $f$), therefore it restricts to an exact functor $f^\ast \colon \derived_{\qcoh}(\mathcal O_Y) \to \derived_{\qcoh}(\mathcal O_X)$, and if $f$ is quasicompact and quasiseparated there is an adjunction isomorphism
\begin{equation}
  \label{adjunction}
  \Hom_{\derived_{\qcoh}(\mathcal O_X)}(f^{\ast}A,B) \cong \Hom_{\derived_{\qcoh}(\mathcal O_Y)}(A,f_{\ast}B)
\end{equation}
for objects $A \in \derived_{\qcoh}(\mathcal O_Y)$ and $B \in \derived_{\qcoh}^{+}(\mathcal O_X)$, see \cite[\href{https://stacks.math.columbia.edu/tag/07BD}{Tag~07BD}]{stacks-project}.

If $\cA$ and $\cB$ are subcategories of $\derived_{\qcoh}(\mathcal O_X)$ for a scheme $X$,
we will write $\RRlHom_X(\cB,\cA) = 0$ to mean that $\RRlHom_X(b,a) = 0$ for all objects $a \in \cA$ and $b\in \cB$.

We shall make free use of the \emph{projection formula},
which states that if $f\colon X \to Y$ is a morphism of schemes,
then for every $E \in \derived(\mathcal O_X)$ and $K \in \Perf Y$ one has
\begin{equation}
  f_\ast E \otimes_{\mathcal O_Y} K = f_\ast (E \otimes_{\mathcal O_X} f^\ast K)
\end{equation}
in $\derived(\mathcal O_Y)$, see \cite[\href{https://stacks.math.columbia.edu/tag/0B54}{Tag 0B54}]{stacks-project}.
A more general version for arbitrary complexes with quasicoherent cohomology can be found in \cite[Proposition~3.9.4]{MR2490557},
and will be used in \Cref{lemma:4.2}.

We next recall some known results on the identification of various triangulated categories of sheaves:
\begin{itemize}
  \item
    By \cite[\href{https://stacks.math.columbia.edu/tag/08DB}{Tag 08DB}]{stacks-project},
    if $X$ is quasicompact and semiseparated, the canonical functor
    \begin{equation}
      \begin{tikzcd}
        \derived(\Qcoh X) \arrow{r} & \derived_{\qcoh}(\Ocal_X)
      \end{tikzcd}
    \end{equation}
    is an exact equivalence (see also~\cite[Corollary~5.5]{MR1214458} for the proof in the case where $X$ is quasicompact and separated).
  \item
    By \cite[Corollary~II.2.2.2.1]{SGA6}, if $X$ is noetherian, the canonical functor
    \begin{equation}
      \begin{tikzcd}
        \derived^{\textrm b}(X)  \arrow{r} & \derived^{\textrm b}_{\coh}(\Ocal_X)
      \end{tikzcd}
    \end{equation}
    is an exact equivalence.
  \item
    By \cite[\href{https://stacks.math.columbia.edu/tag/0FDC}{Tag~0FDC}]{stacks-project},
    if $X$ is noetherian and regular, the canonical functor
    \begin{equation}
      \begin{tikzcd}
        \Perf X  \arrow{r} & \derived^{\textrm b}(X)
      \end{tikzcd}
    \end{equation}
    is an exact equivalence.
\end{itemize}

The following result will be important in encoding semiorthogonal decompositions
using morphisms of perfect complexes,
as in \cref{section:sod-is-dec}.
Let $f\colon \cX \to U$ be a separated morphism;
with a slight abuse of notation,
we will refer to $\mathcal O_{\Delta_f} \defeq \iota_{\Delta_f,\ast}\mathcal O_{\cX}$
as the \emph{structure sheaf of the diagonal}.

\begin{lemma}
  \label{lemma:Diagonal_Perfect}
  Let~$U$ be a quasicompact and semiseparated scheme.
  Let $f\colon \Xcal \to U$ be a smooth and separated morphism of schemes.
  Then $\mathcal O_{\Delta_f}$ is a perfect complex on $\Xcal \times_U \Xcal$.
\end{lemma}
We will apply this when~$f$ is in fact smooth and proper.

\begin{proof}
  The diagonal $\iota_{\Delta_f}\colon \Xcal \into \Xcal \times_U \Xcal$ is a closed immersion by the separatedness of $f$,
  and a locally complete intersection morphism by its smoothness,
  so in particular it is a proper perfect morphism.
  Then, by \cite[Example~2.2]{MR2346195},
  the direct image functor $\iota_{\Delta_f \ast}\colon \derived_{\qcoh}(\Ocal_\Xcal) \to \derived_{\qcoh}(\Ocal_{\Xcal \times_U \Xcal})$
  preserves perfect complexes.
  The result now follows since, by definition, $\Ocal_{\Delta_f}=\iota_{\Delta_f,\ast}\Ocal_{\Xcal}$.
\end{proof}

We will also appeal to the following derived versions of the Nakayama--Azumaya--Krull lemma.

\begin{lemma}[Derived Nakayama--Azumaya--Krull, local version]
  \label{lemma:derived-nakayama-local}
  Let $ ( R, \mathfrak m, \bfk ) $ be a local ring. Let
  \begin{equation}
    \begin{tikzcd}
      P^{ \bullet }=
      {[}
        \cdots \arrow{r} & P^{ k }  \arrow{r}{d^{ k}} & P^{ k + 1 }  \arrow{r} & \cdots
      {]}
    \end{tikzcd}
  \end{equation}
  be a bounded complex of finitely generated free $R$-modules.
  Then $P^{ \bullet } = 0$ in $\Perf R$ if and only if $P^{ \bullet } \otimes_R \bfk = 0$ in $\Perf \bfk$.
\end{lemma}

\begin{proof}
  The ``only if'' direction is obvious, so we only prove the contraposition of the other implication.
  Let $ i \in \bZ $ be the maximal index for which $ \mathcal H^i ( P^{ \bullet } ) \neq 0$,
  so that we may and will assume that $ P^{ j } = 0 $ for all $ j > i $.
  Then the map $d^{i-1}\colon P^{i-1}\to P^i$ is not surjective,
  which implies that $d^{i-1}\otimes_R \bfk\colon P^{i-1}\otimes_R\bfk\to P^i\otimes_R\bfk $
  is not surjective either by the usual Nakayama lemma.
  This implies $ P^\bullet \otimes_R \bfk \neq 0\in\Perf \bfk$.
\end{proof}

\begin{lemma}[Derived Nakayama--Azumaya--Krull, global version]
  \label{lemma:derived-nakayama-global}
  Let $ X $ be a scheme and $ F \in \Perf X$.
  Then $F=0$ in $\Perf X$ if and only if $ \iota_x^* F = 0$ in $\Perf \bfk ( x ) $ for all $x \in X$, where $\iota_{x}$ denotes the canonical morphism $\Spec \bfk (x) \to X$.
\end{lemma}

\begin{proof}
  Note that $ \iota_x $ has a factorisation
  \begin{equation}
    \begin{tikzcd}
      \Spec \bfk ( x )  \arrow{r} & \Spec \cO_{ X, x}  \arrow{r}{ j_x } & X.
    \end{tikzcd}
  \end{equation}
  Since $ j_x $ is flat we have that $F=0$ if and only if $j_x^* F = 0$ in $\Perf \cO_{ X, x }$ for all $x \in X$.
  Thus we have reduced the assertion to \Cref{lemma:derived-nakayama-local}.
\end{proof}

\subsection{Semiorthogonal decompositions}
\label{subsection:sod}

We will recall some important definitions from \cite{MR2801403} in this section.

Let $\mathcal T$ be a triangulated category. Given a class of objects $\cA\subseteq \mathcal T$, its \emph{right} and \emph{left orthogonal}
\begin{equation}
  \begin{aligned}
    \cA^\perp
    &= \Set{T \in \mathcal T\mid\Hom_{\mathcal T}(\cA[k],T) = 0\textrm{ for all }k \in \mathbb Z} \\
    \prescript{\perp}{}{\mathcal{A}}
    &= \Set{T \in \mathcal T\mid\Hom_{\mathcal T}(T,\cA[k]) = 0\textrm{ for all }k \in \mathbb Z}
  \end{aligned}
\end{equation}
define triangulated subcategories of $\mathcal T$, closed under taking direct summands.

Two classes of objects $\cA$, $\cB\subseteq \mathcal T$ are called \emph{semiorthogonal} if $\cA\subseteq \cB^\perp$ (which is equivalent to $\cB\subseteq \prescript{\perp}{}{\mathcal{A}}$).
A subcategory $\cA\subset \mathcal T$ is \emph{strictly full} if, whenever an object $a \in \cA$ is isomorphic to an object $t \in \mathcal T$, one has that $t \in \mathcal{A}$  \cite[\href{https://stacks.math.columbia.edu/tag/001D}{Tag 001D}]{stacks-project}.

\begin{definition}\label{def:SOD}
  A \emph{semiorthogonal decomposition of length $\ell$} of a triangulated category $\mathcal T$ is a finite sequence $\cA^1,\ldots,\cA^\ell$ of strictly full triangulated subcategories of $\mathcal T$, such that
  \begin{enumerate}
    \item $\cA^i \subseteq \cA^{j,\perp}$ for $i<j$, and
    \item for every object $T \in \mathcal T$ there exists a sequence of morphisms
      \begin{equation}
        \label{equation:sod-sequence}
        \begin{tikzcd}
          0 = T_\ell \arrow{r} & T_{\ell-1} \arrow{r} &  \cdots \arrow{r} &  T_1 \arrow{r} &  T_0 = T
        \end{tikzcd}
      \end{equation}
      such that $\mathrm{Cone}(T_k \to T_{k-1}) \in \cA^k$ for all $k=1,\ldots,\ell$.
  \end{enumerate}

  When a sequence $\cA^1,\ldots,\cA^\ell \subseteq \mathcal T$ of strictly full triangulated subcategories defines a semiorthogonal decomposition of $\mathcal T$, we write
  \begin{equation}
    \mathcal T = \braket{\cA^1,\ldots,\cA^\ell}.
  \end{equation}
\end{definition}
We use superscripts, rather than the more conventional subscripts, to reserve the latter for the base change notation that will be introduced later.

\begin{lemma}[{\cite[Lemma 2.3]{MR2801403}}]
  \label{lemma:uniqueness-decomposition-triangles}
  Let $ \cT = \langle \cA, \cB \rangle $ be a semiorthogonal decomposition of a triangulated category~$ \cT $ and take an object $ E \in \cT$.
  Let
  \begin{equation}
    \begin{tikzcd}[row sep=tiny]
      b_1 \arrow{r}{s_1} & E \arrow{r}{t_1} & a_1 \arrow{r}{u_1} & b_1[1] \\
      b_2 \arrow{r}{s_2} & E \arrow{r}{t_2} & a_2 \arrow{r}{u_2} & b_2[1]
    \end{tikzcd}
  \end{equation}
  be two distinguished triangles such that $ a_i \in \cA $ and $ b_i \in \cB $ for $ i = 1, 2$.
  Then there is exactly one pair of isomorphisms
  \begin{equation}
    \begin{tikzcd}
      \beta \colon b_1 \arrow{r}{\sim} & b_2, & \alpha \colon a_1 \arrow{r}{\sim} & a_2
    \end{tikzcd}
  \end{equation}
  inducing an isomorphism
  \begin{equation}
    \begin{tikzcd}
      b_1 \arrow{r}{ s_1 } \arrow{d}{\beta} & E \arrow{r}{ t_1 } \arrow[equal]{d} & a_1 \arrow{r}{ u_1 } \arrow{d}{\alpha} & b_1 [ 1 ] \arrow{d}{\beta[1]}\\
      b_2 \arrow{r}{ s_2 } & E \arrow{r}{ t_2 } & a_2 \arrow{r}{ u_2 } & b_2 [ 1 ]
    \end{tikzcd}
  \end{equation}
  of distinguished triangles.
\end{lemma}

\Cref{lemma:uniqueness-decomposition-triangles} implies that,
given a semiorthogonal decomposition $\mathcal T = \braket{\cA^1,\ldots,\cA^\ell}$,
the association $T\mapsto \mathrm{Cone}(T_k\to T_{k-1})$ is functorial.
The corresponding functor $\alpha_k\colon \mathcal T\to\mathcal{A}^k$ is called the $k$th \emph{projection functor} of the given semiorthogonal decomposition.
\Cref{lemma:uniqueness-decomposition-triangles}
will also be used in the proof of \Cref{proposition:zariski-gluing-lemma}.

\begin{definition}
  A full triangulated subcategory $\cA \subseteq \mathcal T$ is called \emph{right admissible} (resp.~\emph{left admissible})
  if the inclusion functor $i\colon \cA \into \mathcal T$ has a right adjoint $i^!\colon \mathcal T\to \cA$
  (resp.~a left adjoint $i^*\colon \mathcal T\to \cA$).
  It is called \emph{admissible} if it is both left and right admissible.
\end{definition}

The following standard lemma will be implicitly used repeatedly.
\begin{lemma}[{\cite[Lemma~3.1]{MR0992977}}]
  \label{lemma:admissibility}
  If $\mathcal T = \braket{\cA,\cB}$ is a semiorthogonal decomposition,
  then $\cA$ is left admissible and $\cB$ is right admissible.
  Conversely, if $\cA\subseteq \mathcal T$ is left admissible,
  then $\mathcal T = \braket{\cA,\prescript{\perp}{}{\mathcal{A}}}$ is a semiorthogonal decomposition,
  and if $\cB\subseteq \mathcal T$ is right admissible,
  then $\mathcal T = \braket{\cB^\perp,\cB}$ is a semiorthogonal decomposition.
\end{lemma}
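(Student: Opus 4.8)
The plan is to prove the two implications separately; in both directions the idea is the same: manufacture the (co)unit of the claimed adjunction by hand from a decomposition triangle, and then recognise it as an adjunction using the semiorthogonality vanishing $\Hom_{\mathcal T}(\cB[k],\cA)=0$ for all $k\in\mathbb Z$. This is Bondal's lemma \cite{Bondal_RAACS}, so no new ideas are needed; I record the plan for completeness.

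\emph{Forward direction.} Assume $\mathcal T=\braket{\cA,\cB}$. Specialising Definition~\ref{def:SOD}(2) to $\ell=2$ with $T_2=0$ produces, for every $T\in\mathcal T$, a triangle
\[
  T_1\xrightarrow{\ g_T\ }T\xrightarrow{\ p_T\ }a_T\to T_1[1],\qquad T_1\in\cB,\quad a_T\in\cA .
\]
First I would upgrade the object-wise existence of this triangle to functoriality: for $f\colon T\to T'$ the composite $p_{T'}\circ f\circ g_T\colon T_1\to a_{T'}$ lies in $\Hom_{\mathcal T}(T_1,a_{T'})=0$ (as $\cA\subseteq\cB^\perp$), so $f\circ g_T$ factors through $g_{T'}$, and the factorisation is unique since $\Hom_{\mathcal T}(T_1,a_{T'}[-1])=0$ as well; completing to a morphism of triangles then also yields the induced map $a_T\to a_{T'}$, and compatibility with composition follows from the same uniqueness. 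Hence $T\mapsto T_1$ and $T\mapsto a_T$ are functors $\mathcal T\to\cB$, $\mathcal T\to\cA$, and $g_T$, $p_T$ are natural in $T$. Next I would check that these data exhibit $T\mapsto T_1$ as right adjoint to the inclusion $\cB\hookrightarrow\mathcal T$ and $T\mapsto a_T$ as left adjoint to $\cA\hookrightarrow\mathcal T$: applying $\Hom_{\mathcal T}(B,-)$ for $B\in\cB$ to the triangle and using $\Hom_{\mathcal T}(B,a_T[k])=0$ shows $(g_T)_*\colon\Hom_{\mathcal T}(B,T_1)\to\Hom_{\mathcal T}(B,T)$ is bijective (with $g$ the counit); dually $\Hom_{\mathcal T}(-,A)$ for $A\in\cA$ together with $\Hom_{\mathcal T}(T_1[k],A)=0$ handles the left adjoint. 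Thus $\cB$ is right admissible and $\cA$ is left admissible.

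\emph{Converse.} Suppose $\cA\subseteq\mathcal T$ is left admissible with left adjoint $j^*\colon\mathcal T\to\cA$ to the inclusion $j$ and unit $\eta\colon\id\Rightarrow jj^*$. For $T\in\mathcal T$ I would complete $\eta_T$ to a triangle $T_1\to T\xrightarrow{\eta_T}jj^*T\to T_1[1]$ and show $T_1\in\prescript{\perp}{}{\cA}$: for each $A\in\cA$ the maps $\eta_T^*\colon\Hom_{\mathcal T}(jj^*T,A[k])\to\Hom_{\mathcal T}(T,A[k])$ are exactly the adjunction isomorphisms (using $A[k]\in\cA$), so the long exact sequence obtained from $\Hom_{\mathcal T}(-,A[k])$, $k\in\mathbb Z$, forces $\Hom_{\mathcal T}(T_1,A[k])=0$. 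Since $\mathrm{Cone}(T_1\to T)\simeq jj^*T\in\cA$, the triangle above (with $T_2=0$) witnesses condition~(2) of Definition~\ref{def:SOD}, while condition~(1), i.e.\ $\cA\subseteq(\prescript{\perp}{}{\cA})^\perp$, is immediate from the definition of $\prescript{\perp}{}{\cA}$; hence $\mathcal T=\braket{\cA,\prescript{\perp}{}{\cA}}$. The statement for a right admissible $\cB$ is entirely dual: start from the counit $\epsilon\colon ii^!\Rightarrow\id$ of the right adjoint $i^!$, complete $\epsilon_T$ to $i^!T\to T\to C\to i^!T[1]$, apply $\Hom_{\mathcal T}(B,-)$ for $B\in\cB$ to get $C\in\cB^\perp$, and conclude $\mathcal T=\braket{\cB^\perp,\cB}$.

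\emph{Main obstacle.} There is no real obstacle; the only slightly delicate point is the bookkeeping in the first step of the forward direction — verifying that the decomposition triangle is functorial in $T$ (not merely that its terms exist object by object) and that the resulting natural transformations are genuinely a unit and a counit rather than just natural transformations inducing isomorphisms on the relevant $\Hom$-groups. Both are taken care of uniformly by the vanishing $\Hom_{\mathcal T}(\cB[k],\cA)=0$.
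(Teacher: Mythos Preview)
Your proof is correct and is the standard argument. The paper does not give its own proof of this lemma; it simply cites \cite[Lemma~3.1]{Bondal_RAACS}, so there is nothing to compare against beyond noting that what you wrote is essentially Bondal's original argument.
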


\begin{definition}
  \label{definition:linear-sod}
  Let $f\colon \Xcal \to U$ be a morphism of schemes.
  \begin{enumerate}
    \item A triangulated subcategory $\mathcal T\subseteq \derived_{\qcoh}(\Ocal_{\mathcal X})$
      is said to be $U$-\emph{linear}
      if $\mathcal T \otimes_{\mathcal O_{\cX}} f^\ast (\Perf U) \subseteq \mathcal T$,
      i.e.,~for every $t\in \mathcal T$ and $p\in \Perf U$ one has $t\otimes_{\Ocal_{\mathcal X}} f^*p\in \mathcal T$.
    \item A semiorthogonal decomposition
      \begin{equation}
        \Perf \Xcal = \braket{\cA^1,\ldots,\cA^\ell}
      \end{equation}
      is said to be \emph{$U$\dash linear} if every $\cA^i$ is $U$-linear.
  \end{enumerate}
\end{definition}

Note that a semiorthogonal decomposition $\Perf \Xcal = \braket{\cA,\cB}$ is  $U$\dash linear
if and only if $ \cA $ is $ U $\dash linear,
if and only if $ \mathcal{ B } $ is $ U $\dash linear.

We record here, for future reference, a generalisation of the orthogonality criterion that was given by Kuznetsov in  \cite[Lemma~2.7]{MR2801403}.

\begin{lemma}
  \label{lemma:semiorthogonality-via-local-Ext}
  Let~$U$ be a quasicompact and semiseparated scheme,
  and let~$f\colon\cX\to U$ be a morphism of schemes.
  A pair of $U$\dash linear subcategories $ \cA, \cB \subseteq \Perf \cX $
  is semiorthogonal
  if and only if $ \RRlHom_f(B, A) = 0$ for all $ A \in \cA $ and $ B \in \cB $.
\end{lemma}

\begin{proof}
  From \cite[Theorem~3.1.1]{MR1996800} it follows that $\derived_{ \qcoh }( \cO_U ) $
  is generated by a perfect complex.
  Then the proof of \cite[Lemma~2.7]{MR2801403} works verbatim.
\end{proof}

Next we recall the action of the braid group on
the set of all semiorthogonal decompositions of a triangulated category~$\mathcal{T}$
(where all subcategories are admissible).
Here we will denote by~$\Br_\ell$ the braid group on~$\ell$ strands.
This will play a role in \Cref{subsection:group-actions}.
We recall the notion of mutation from \cite{MR1039961},
see also \cite[\S2.4]{0904.4330}.

\begin{definition}
  Let~$\mathcal{T}=\langle\mathcal{A}^1,\ldots,\mathcal{A}^\ell\rangle$ be a semiorthogonal decomposition,
  where each~$\mathcal{A}^i$ is an admissible subcategory of~$\mathcal{T}$.
  It will be denoted by~$\mathcal{A}^\bullet$ for the purposes of this definition.
  The \emph{right mutation} (resp.~\emph{left mutation}) at position~$i$ is the semiorthogonal decomposition
  \begin{equation}
    \mathcal{T}
    =
    \langle
    \mathcal{A}^1,\ldots,\mathcal{A}^{i-2},\mathcal{A}^i,\rightmutation_i(\mathcal{A}^\bullet),\mathcal{A}^{i+1},\ldots,\mathcal{A}^\ell
    \rangle
  \end{equation}
  resp.
  \begin{equation}
    \mathcal{T}
    =
    \langle
    \mathcal{A}^1,\ldots,\mathcal{A}^{i-1},\leftmutation_i(\mathcal{A}^\bullet),\mathcal{A}^{i+1},\ldots,\mathcal{A}^\ell
    \rangle,
  \end{equation}
  where
  \begin{equation}\label{equation:right mutation}
    \rightmutation_i(\mathcal{A}^\bullet)\defeq
    {}^\perp\langle\mathcal{A}^1,\ldots,\mathcal{A}^{i-2},\mathcal{A}^i\rangle\cap\langle\mathcal{A}^{i+1},\ldots,\mathcal{A}^\ell\rangle^\perp
  \end{equation}
  resp.
  \begin{equation}\label{equation:left mutation}
    \leftmutation_i(\mathcal{A}^\bullet)\defeq
    {}^\perp\langle\mathcal{A}^1,\ldots,\mathcal{A}^{i-1}\rangle\cap\langle\mathcal{A}^i,\mathcal{A}^{i+2},\ldots,\mathcal{A}^\ell\rangle^\perp.
  \end{equation}
\end{definition}
Mutations are well-defined, i.e.,~they are again semiorthogonal decompositions by admissible subcategories,
by \cite[Proposition~4.9]{MR1039961}.
The left and right mutation operations moreover satisfy the defining relations of the braid group,
so~$\Br_\ell$ acts on the set of semiorthogonal decompositions by admissible subcategories of length $ \ell $ of $\cT$.

The following lemma explains how this additional assumption on the admissibility of the subcategories
is always satisfied in the  setting of this paper (cf.~\Cref{situation:main}).

\begin{lemma}
  \label{lemma:oo-admissible-sod}
  Let~$U$ be a quasicompact and semiseparated scheme,
  and let~$f\colon\cX\to U$ be a smooth and proper morphism of schemes.
  Then for every $U$\dash linear semiorthogonal decomposition $\Perf \Xcal = \braket{\cA^1,\ldots,\cA^\ell}$
  the subcategories~$\cA^i\subset\Perf\cX$ are admissible.
\end{lemma}

\begin{proof}
  Since $f$ is smooth and proper,
  (an enhancement of) the category $\Perf \Xcal$ is smooth and proper over $U$ as a dg category by \cite[Lemma~4.9(6)]{MR3948688}.
  Then the result follows from the proof of \cite[Lemma~4.15(4)]{MR3948688}.
  In op.~cit.~there is a standing separatedness assumption,
  which can be replaced by a semiseparated assumption,
  as explained by \cite[Remarks~3.12 and~3.16]{MR4292740}.
\end{proof}

\section{Base change for semiorthogonal decompositions}
\label{section:base-change}
In this section we will generalise
Kuznetsov's base change theorem for semiorthogonal decompositions \cite[Proposition~5.1]{MR2801403},
proven there in the context of quasiprojective varieties.
For some background the reader is referred to \cite[\S 2.3, \S 5.1]{MR2801403}
and the references therein.
Kuznetsov's base change theorem was also generalised in \cite[\S 3.2]{MR4292740}.

\subsection{Basic setup and definitions}
We fix once and for all the following setup.

\begin{situation}
  \label{situation:main}
  Let~$U$ be a quasicompact and semiseparated scheme,
  and let~$f\colon\cX\to U$ be a smooth and proper morphism of schemes.
\end{situation}
We impose the condition that~$U$ is quasicompact and semiseparated throughout;
derived categories are only well-behaved with these assumptions.
Throughout we fix a cartesian diagram
\begin{equation}
  \label{eq:cart_square}
  \begin{tikzcd}
    \cX_V \MySymb{dr}\arrow{r}{ \phi_\cX } \arrow[swap]{d}{ f_V } & \cX \arrow{d}{f}\\
    V \arrow[swap]{r}{\phi} & U
  \end{tikzcd}
\end{equation}
of schemes, where $f$ is as in \Cref{situation:main}. Note that, in particular, pushforward along (any base change of) $f$ preserves quasicoherent cohomology by \cite[\href{https://stacks.math.columbia.edu/tag/08D5}{Tag~08D5}]{stacks-project}, and we can give the following definition.

\begin{definition}
  The fibre square \eqref{eq:cart_square} is said to be \emph{exact}
  if the canonical natural transformation $ \phi^*\circ f_* \to f_{V \ast }\circ \phi_\cX^* $
  of (derived) functors $\derived_{ \qcoh }(\cO_\cX) \to \derived_{ \qcoh }(\cO_V) $ is an isomorphism.
  We also say that $ \phi \colon V \to U $ is \emph{faithful} with respect to $ f \colon \cX \to U $
  if \eqref{eq:cart_square} is exact.
\end{definition}

The following is a special case of \cite[\href{https://stacks.math.columbia.edu/tag/08IB}{Tag 08IB}]{stacks-project}
and provides us with the setting in which we need faithfulness.
\begin{lemma}
  \label{lemma:flat-implies-faithful}
  If $ f \colon \cX \to U$ is a flat morphism of schemes (e.g.~a morphism as in \Cref{situation:main}),
  then every quasicompact and semiseparated morphism $ \phi \colon V \to U $ is faithful with respect to $f$.
\end{lemma}

For later purposes, we introduce the following notation.

\begin{notation}
  \label{notation:kar-env}
  Consider a cartesian diagram as in \eqref{eq:cart_square}. Let $\cC \subset \Perf \cX$ be a class of objects. We denote by
  \begin{equation}
    \langle \cC \rangle_\phi \subset \Perf \cX_V
  \end{equation}
  the \emph{Karoubian envelope} (also known as \emph{idempotent completion}) of the triangulated hull of the class of objects $\phi_{\cX}^\ast \cC \subset \Perf \cX_V$. This is, by construction, a strictly full triangulated subcategory of $\Perf \cX_V$. When $\phi = \id_{U}$, we simply write $\langle \cC \rangle$ for the Karoubian envelope of the triangulated hull of $\cC$ inside $\Perf \cX$. In particular, we may also write $\langle \cC \rangle_\phi = \langle \phi_{\cX}^\ast \cC \rangle$.
\end{notation}

We can now introduce  the notion of \emph{base change} of a semiorthogonal decomposition,
building on the notion of linearity that was introduced in \Cref{definition:linear-sod}.

\begin{definition}[Base change for semiorthogonal decompositions]
  \label{def:generalised_BC}
  Consider a cartesian diagram as in \eqref{eq:cart_square}. Given a $ U $-linear semiorthogonal decomposition
  \begin{equation}\label{eqn:SOD_on_X}
    \Perf \cX = \braket{ \cA^1,\ldots,\cA^\ell },
  \end{equation}
  we say that a $ V $-linear semiorthogonal decomposition
  \begin{equation}
    \Perf \cX_V = \braket{ \cA^1_\phi,\ldots,\cA^\ell_\phi }
  \end{equation}
  is a \emph{base change} of \eqref{eqn:SOD_on_X} if $\phi_{\cX}^\ast \cA^i\subseteq \cA^i_\phi$ for all $i = 1,\ldots,\ell$.
\end{definition}

Our strategy for the remainder of this section is as follows.
In \Cref{subsection:affine-base-change} we construct a base change as in \Cref{def:generalised_BC} when $V$ is affine
(see~\Cref{proposition:base-change-sod}).
In \Cref{subsection:gluing-lemma} we prove an auxiliary gluing result (see \Cref{proposition:zariski-gluing-lemma}),
which will then be exploited in \Cref{subsection:general-base-change}
to prove a more general base change, cf.~\Cref{corollary:base-change-sod}, which will be used in the proof of \Cref{proposition:description-sod-f}. Finally, in \Cref{subsec:geom-points}, we prove that whether or not two linear semiorthogonal decompositions coincide can be checked on geometric points.

\subsection{Base change to an affine scheme}
\label{subsection:affine-base-change}

The following result is the generalisation of Kuznetsov's base change theorem
\cite[Proposition~5.1]{MR2801403}
in the setup of \Cref{situation:main},
limited to the case where the base change $V \to U$ considered is from an affine scheme $V$.

\begin{proposition}
  \label{proposition:base-change-sod}
  Let $ f \colon \cX \to U $ be a morphism of schemes as in \Cref{situation:main},
  and let $ \phi\colon V\to U $ be a morphism from an affine scheme. Let
  \begin{equation}
    \label{eq:SOD of cX}
    \Perf \cX
    =
    \braket{
      \cA^1, \dots, \cA^\ell
    }
  \end{equation}
  be a $ U $-linear semiorthogonal decomposition.
  Consider, according to \Cref{notation:kar-env}, the strictly full triangulated subcategories
  \begin{equation}
    \label{def:A_phi}
    \cA^i_\phi \defeq \langle \cA^i \rangle_\phi \defeq \langle \phi_{\cX}^{\ast} \cA^{i}\rangle \subseteq \Perf \cX_V,
  \end{equation}
  for $i=1,\ldots,\ell$. Then
  \begin{equation}
    \label{eq:SOD of cX_V}
    \Perf \cX_V
    =
    \braket{
      \cA_\phi^1, \dots, \cA_\phi^\ell
    }
  \end{equation}
  is the unique base change of \eqref{eq:SOD of cX}.
\end{proposition}

\begin{remark}
  \label{remark:properties}
  Before we proceed with the proof, a few remarks are in order.

  \begin{enumerate}
    \item
      \label{enumerate:phi-is-quasicompact}
      Note that, as $V$ is affine and $U$ is semiseparated in \cref{proposition:base-change-sod},
      $\phi$ is quasicompact
      by a combination of \cite[\href{https://stacks.math.columbia.edu/tag/01SP}{Tag 01SP}]{stacks-project}
      and \cite[\href{https://stacks.math.columbia.edu/tag/01SL}{Tag 01SL}]{stacks-project}.
      Moreover, $\phi$ is separated (its source being affine, cf.~\cite[\href{https://stacks.math.columbia.edu/tag/01KN}{Tag 01KN}]{stacks-project}),
      hence a fortiori semiseparated.
      Thus, by \Cref{lemma:flat-implies-faithful}, the morphism $\phi$ is automatically faithful with respect to the smooth (hence flat) morphism $f$.
    \item
      \label{enumerate:proper-over-affine-is-qc-sep}
      The scheme $\cX_V$, being proper over an affine scheme, is separated and quasicompact.
    \item
      \label{enumerate:no-need-for-objects-from-base}
      The definition \eqref{def:A_phi} of the category $\cA^i_\phi$, unlike the analogous definition from \cite{MR2801403}
      (cf.~for instance Lemma 5.2 in loc.~cit.),
      does not involve tensoring the objects in the class $\phi_{\cX}^\ast \cA^i$
      with objects from $f_V^\ast (\Perf V)$.
      This apparent discrepancy is just due to the fact that $\Perf V$ is generated by $\mathcal O_V$, as $V$ is affine.
    \item
      \label{enumerate:more-explicit-base-change}
      Unraveling \Cref{notation:kar-env}, the definition \eqref{def:A_phi} of $\cA^i_\phi$
      can be restated by saying that $\cA^i_\phi$ is
      the minimal strictly full triangulated subcategory of $\Perf \mathcal X_V$ closed under taking direct summands,
      containing the class $\phi_{\cX}^\ast \cA^i$.
  \end{enumerate}
\end{remark}

\begin{proof}[Proof of \Cref{proposition:base-change-sod}]
  To show the uniqueness of the base change, suppose we have another base change $\Perf\cX_V=\langle\cB^1,\dots,\cB^\ell\rangle$ of \eqref{eq:SOD of cX}. Then each $\cB^i$ contains $\phi_\cX^* \cA^i$,
  so that $ \cA^i_\phi \subseteq \cB^i$.
  Thus $ \cA^i_\phi = \cB^i$,
  by \cite[Lemma~3.2]{MR2801403}.

  To show that \eqref{eq:SOD of cX_V} is a base change in the sense of \Cref{def:generalised_BC},
  we point out how the original proof of \cite[Proposition~5.1]{MR2801403} is justified in the greater generality we are working in.
  As in the first part of the proof of \cite[Proposition~5.1]{MR2801403},
  the required semiorthogonality of the subcategories $ \cA^i_\phi, \cA^{ j }_\phi $
  follows from \Cref{lemma:flat-implies-faithful,lemma:semiorthogonality-via-local-Ext}.
  Finally, the assertion that the subcategories $\cA^1_\phi,\ldots,\cA^\ell_\phi$ generate the whole $\Perf \cX_V$
  follows, as explained in the latter half of the proof,
  from the special case of \cite[Proposition~3.15]{MR4292740}
  (which is a generalisation of \cite[Lemma~5.2]{MR2801403})
  where the semiorthogonal decomposition has length~1.
\end{proof}

\subsection{Zariski descent for semiorthogonal decompositions}
\label{subsection:gluing-lemma}
In this subsection we prove a gluing result (in fact, \emph{Zariski descent}) for semiorthogonal decompositions (cf.~\Cref{proposition:zariski-gluing-lemma}). This will used to prove the base change theorem that is required in this paper,
namely \Cref{corollary:base-change-sod}.

\begin{lemma}
  \label{lm:Cech}
  Suppose that $ X = U \cup V $ is an open cover of a scheme $X$ and fix two perfect complexes $ E,F \in \Perf X$.
  Then there is a distinguished triangle
  \begin{multline*}
    \RRHom_X ( E,F )
    \to
    \RRHom_U (E|_U,F|_U )
    \oplus
    \RRHom_V (E|_V, F|_V ) \\
    \to
    \RRHom_{ U  \cap V } (E|_{ U  \cap V },F|_{ U  \cap V } )
    \to
    \RRHom_X ( E, F ) [ 1 ].
  \end{multline*}
\end{lemma}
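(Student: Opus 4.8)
The plan is to realize the desired triangle as coming from a Mayer--Vietoris sequence of sheaves, applied after passing to internal homs, and then take global sections. First I would form the object $\cG \defeq \RRlHom_X(\cE,\cF) \in \derived_{\qcoh}(\Ocal_X)$, which is a perfect complex since $\cE$ and $\cF$ are; its formation commutes with restriction to opens, so $\cG|_U \simeq \RRlHom_U(\cE|_U,\cF|_U)$ and likewise for $V$ and $U\cap V$. Denoting by $j_U\colon U\into X$, $j_V\colon V\into X$, $j_{UV}\colon U\cap V\into X$ the open immersions, there is a standard Mayer--Vietoris distinguished triangle in $\derived_{\qcoh}(\Ocal_X)$,
\[
  \cG \to \RR j_{U\ast}j_U^\ast\cG \oplus \RR j_{V\ast}j_V^\ast\cG \to \RR j_{UV\ast}j_{UV}^\ast\cG \xrightarrow{+1} \cG[1],
\]
obtained from the fact that the unit $\Ocal_X \to \RR j_{U\ast}\Ocal_U \oplus \RR j_{V\ast}\Ocal_V \to \RR j_{UV\ast}\Ocal_{U\cap V}$ fits in such a triangle (this is the sheaf-theoretic Mayer--Vietoris for the cover $X=U\cup V$; one can check it affine-locally or cite the \v{C}ech description) and then tensoring/composing with $\cG$, or equivalently applying $\RRlHom(\cE,-)$ to the analogous triangle for $\cF$.

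Next I would apply the derived global sections functor $\RR\Gamma(X,-)$ to this triangle. Since $\RR\Gamma(X,\RR j_{U\ast}(-)) \simeq \RR\Gamma(U,-)$ and similarly for $V$ and $U\cap V$, and since $\RR\Gamma(X,\RRlHom_X(\cE,\cF)) \simeq \RRHom_X(\cE,\cF)$ (with the analogous identifications on the opens, using that restriction of $\cE,\cF$ to opens commutes with $\RRlHom$), the resulting triangle is exactly
\[
  \RRHom_X(\cE,\cF) \to \RRHom_U(\cE|_U,\cF|_U) \oplus \RRHom_V(\cE|_V,\cF|_V) \to \RRHom_{U\cap V}(\cE|_{U\cap V},\cF|_{U\cap V}) \xrightarrow{+1},
\]
as claimed. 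One should be slightly careful about which maps appear (the middle map is the difference of the two restrictions, up to sign), but this is forced by the construction and is the usual Mayer--Vietoris convention.

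The main technical point — and the step I would be most careful about — is establishing the sheaf-level Mayer--Vietoris triangle in the unbounded derived category $\derived_{\qcoh}(\Ocal_X)$ with the correct functoriality, i.e.\ that $\RR j_{UV\ast}j_{UV}^\ast \simeq \RR j_{U\ast}j_U^\ast \circ \RR j_{V\ast}j_V^\ast$ (so that the composite in the triangle is well-defined) and that the sequence of units is genuinely a distinguished triangle rather than merely giving a long exact sequence on cohomology. This can be verified by reducing to the affine case, or by invoking the \v{C}ech complex computation of $\RR\Gamma$ for a two-element cover; alternatively, since everything in sight is a perfect (hence bounded, with quasi-coherent cohomology) complex, one may argue directly with injective or flasque resolutions. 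Once this is in place, the rest is formal. I would remark that this lemma is the input used, together with the earlier \v{C}ech cohomological vanishing \eqref{eq:vanishing of Hi}, to control the behavior of $\RRHom$ under the affine covers appearing in Lemma~\ref{lm:generator of the base change}.
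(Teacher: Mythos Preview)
Your proof is correct and is essentially the same as the paper's: start from the Mayer--Vietoris triangle $\cO_X \to \RR j_{U\ast}\cO_U \oplus \RR j_{V\ast}\cO_V \to \RR j_{U\cap V\ast}\cO_{U\cap V} \xrightarrow{+1}$, tensor with $\RRlHom_X(\cE,\cF)$ (using the projection formula to identify $\RR j_{U\ast}\cO_U \otimes^\LL \cG \simeq \RR j_{U\ast}j_U^\ast\cG$), and apply $\RR\Gamma(X,-)$. One small correction to your closing remark: in the paper this lemma feeds into the Zariski descent argument of Lemma~\ref{lm:finite affine Zariski descent}, not into Lemma~\ref{lm:generator of the base change}.
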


\begin{proof}
  Let $ j_U, j_V, j_{ U \cap V} $ be the open immersions from $ U, V, U \cap V $ to $ X$.
  Then, by \cite[\href{https://stacks.math.columbia.edu/tag/08GW}{Tag 08GW}]{stacks-project}, there exists a distinguished triangle
  \begin{equation}
    \begin{tikzcd}
      \cO_X
      \arrow{r} &
      j_{ U,\ast } \cO_U
      \oplus
      j_{ V,\ast } \cO_V
      \arrow{r} &
      j_{ U \cap V,\ast } \cO_{ U \cap V }
      \arrow{r} &
      \cO_X [ 1 ].
    \end{tikzcd}
  \end{equation}
  Applying the derived functors $ - \otimes_{ \cO_X }^{\mathbf{L}} \RRlHom_X (E,F) $ and $ \RR \Gamma ( X, - )$,
  we conclude.
\end{proof}

\begin{proposition}[Gluing semiorthogonal decompositions]
  \label{proposition:zariski-gluing-lemma}
  Let $ f \colon \cX \to U $ be a morphism of schemes as in \Cref{situation:main}.
  Let $ U = \bigcup_{ i \in I } U_i $ be a finite affine Zariski open covering
  and set $\cX_i = f^{-1}(U_i)$.
  Suppose that for each $ i \in I $ there exists a $ U_i$-linear semiorthogonal decomposition
  \begin{equation}
    \label{eq:SOD_cX_i}
    \Perf \cX_i = \braket{\cA_i^1,\cA_i^2,\ldots,\cA_i^\ell},
  \end{equation}
  whose base changes to the intersection $ U_{ ij } \defeq U_i \cap U_j $ coincide.
  Then there exists a unique $ U $-linear semiorthogonal decomposition
  \begin{equation}
    \Perf \cX = \braket{ \cA^1, \cA^2,\ldots,\cA^\ell }
  \end{equation}
  whose base change by $ U_i \hookrightarrow U $
  (as in \Cref{proposition:base-change-sod})
  coincides with \eqref{eq:SOD_cX_i}.
\end{proposition}

Note that,
since $U$ is assumed to be semiseparated from \Cref{situation:main},
the intersections $ U_{ij} $ are affine schemes;
in particular,
it makes sense to base change \eqref{eq:SOD_cX_i} to $U_{ij}$ via \Cref{proposition:base-change-sod}.

\begin{proof}
  We start proving the case $\ell=2$, for which we shall use the notation $\cA_i=\cA_i^1$ and $\cB_i = \cA_i^2$. Let us define the subcategories
  \begin{equation}
    \begin{split}
      \cA &\defeq \left\{ E \in \Perf \cX \mid E \vert_{ \cX_i } \in \cA_i \textrm{ for all } i \in I \right\}  \\
      \cB &\defeq \left\{ E \in \Perf \cX \mid E \vert_{ \cX_i } \in \cB_i \textrm{ for all } i \in I \right\}
    \end{split}
  \end{equation}
  of $\Perf \cX$. We will show that $ ( \cA, \cB ) $ is the unique pair of subcategories having the desired properties, by an induction on the number $ N = \# I < \infty $.

  When $ N = 1 $, we have nothing to show.
  Suppose that $ N = 2 $, and write $ I = \{ 1, 2 \} $.
  The semiorthogonality $ \cA \subseteq \cB^{ \perp } $ immediately follows from \Cref{lm:Cech}.
  We next show that $ \Perf \cX $ is generated by $ \cA $ and $ \cB $.
  Take an arbitrary object $ E \in \Perf \cX $.
  For $ i =1,2 $,
  consider the distinguished triangle
  \begin{equation}
    \begin{tikzcd}
      b_i \arrow{r}{s_i} & E|_{\cX_i} \arrow{r}{t_i} & a_i \arrow{r}{u_i} & b_i[1]
    \end{tikzcd}
  \end{equation}
  induced by the semiorthogonal decomposition \eqref{eq:SOD_cX_i},
  which is unique up to unique isomorphism by \Cref{lemma:uniqueness-decomposition-triangles}. Again by \Cref{lemma:uniqueness-decomposition-triangles}, the assumption implies that there exists a unique pair of isomorphisms
  \begin{equation}
    \begin{tikzcd}
      \beta_{ij} \colon b_j|_{\cX_{ij}} \arrow{r}{\sim} & b_i|_{\cX_{ij}}, &
      \alpha_{ij} \colon a_j|_{\cX_{ij}} \arrow{r}{\sim} & a_i|_{\cX_{ij}}
    \end{tikzcd}
  \end{equation}
  which are compatible with the morphisms $ s_i, t_i, u_i $ in the sense that the diagram
  \begin{equation}
    \begin{tikzcd}[row sep=large,column sep=large]
      b_i |_{ \cX_{ 1 2 } } \arrow{r}{ s_i |_{ \cX_{ 1 2 } } } \arrow{d}{\beta_{ j i } }
      & E |_{ \cX_{ 1 2 } } \arrow{r}{ t_i |_{ \cX_{ 1 2 } } } \arrow[equal]{d} & a_i |_{ \cX_{ 1 2 } } \arrow{r}{ u_i |_{ \cX_{ 1 2 } } } \arrow{d}{\alpha_{ j i }} & b_i |_{ \cX_{ 1 2 } } [ 1 ] \arrow{d}{\beta_{ j i }[1]}\\
      b_j |_{ \cX_{ 1 2 } } \arrow{r}{ s_j |_{ \cX_{ 1 2 } } } & E |_{ \cX_{ 1 2 } } \arrow{r}{ t_j |_{ \cX_{ 1 2 } } } & a_j |_{ \cX_{ 1 2 } } \arrow{r}{ u_j |_{ \cX_{ 1 2 } } } & b_j |_{ \cX_{ 1 2 } } [ 1 ]
    \end{tikzcd}
  \end{equation}
  is an isomorphism of distinguished triangles. Thus the object $b_{12} \defeq b_1|_{\cX_{12}}$ is canonically defined. Let $j_i \colon \cX_i \into \cX$ and $j_{12} \colon \cX_{12} = \cX_1 \cap \cX_2 \into \cX$ denote the open immersions into $\cX$. Define
  \begin{equation}
    b \defeq \Cone(j_{1\ast}b_1 \oplus j_{2\ast} b_2 \to j_{12\ast}b_{12})[1],
  \end{equation}
  so that $b \in \cB$. The morphisms $s_1$ and $s_2$, by \cite[\href{https://stacks.math.columbia.edu/tag/08DG}{Tag~08DG}]{stacks-project}, induce a morphism $s\colon b \to E$ and we define $a \defeq \Cone(s)$. Since
  \begin{equation}
    a|_{\cX_i} \cong \Cone(s_i) \cong a_i,
  \end{equation}
  it follows that $a \in \cA$, as required.

  Now consider the general case when $ N \ge 3 $ and suppose that the assertion is true up to $ N - 1$.
  Write $ I = \{ 1, 2, \dots, N \} $ and set $ U ' \defeq \bigcup_{ i = 1}^{ N - 1 } U_i$.
  Applying the induction hypothesis to this covering, we can glue the semiorthogonal decompositions on $ U_1, \dots, U_{ N - 1 } $ uniquely to produce a $ U ' $-linear semiorthogonal decomposition
  \begin{equation}\label{eq:SOD on U'}
    \Perf \cX_{ U ' } = \langle \cA ', \cB ' \rangle.
  \end{equation}
  By using the case $ \# I = 2 $, we can further uniquely glue
  \eqref{eq:SOD on U'}
  with the one on $ U_N $ to obtain the desired $ U $-linear semiorthogonal decomposition.

  The case of a semiorthogonal decomposition of arbitrary length $\ell$ can be proven by induction on~$\ell$. Consider~$U_i$\dash linear semiorthogonal decompositions
  \begin{equation}
    \Perf\cX_i
    =
    \langle
    \cA_i^1,\ldots,\cA^{\ell+1}_i
    \rangle,
  \end{equation}
  of length~$\ell+1$ which coincide over~$U_{ij}$. Then we can reduce it to the statement for semiorthogonal decompositions of length~$\ell$ by considering the semiorthogonal decompositions
  \begin{equation}
    \begin{aligned}
      \Perf\cX_i
      &=
      \langle
      \cA_i^1,\langle\cA_i^2,\cA_i^3\rangle,\ldots,\cA_i^{\ell+1}
      \rangle \\
      &=
      \langle
      \langle\cA_i^1,\cA_i^2\rangle,\cA_i^3,\ldots,\cA_i^{\ell+1}
      \rangle
    \end{aligned}
  \end{equation}
  and glue these two semiorthogonal decompositions to semiorthogonal decompositions of~$\Perf\cX$. It then suffices to consider the intersection of the first components in the two gluings to find the required semiorthogonal decomposition of length~$\ell+1$.
\end{proof}

\subsection{Generalised base change}
\label{subsection:general-base-change}
As a consequence of the gluing result given in the previous section,
we can now present a rather general version of the base change theorem for semiorthogonal decompositions. This will be used in \Cref{proposition:description-sod-f} in order to settle \Cref{theorem:main-intro}\ref{enumerate:main-intro-values}, as well as later in \Cref{lm:sod->dec}.

\begin{corollary}
  \label{corollary:base-change-sod}
  Let $ f \colon \cX \to U $ be a morphism of schemes as in \Cref{situation:main}.
  For every morphism $ \phi \colon V \to U $ from a quasicompact and semiseparated scheme $ V $,
  and for every $U$-linear semiorthogonal decomposition
  \begin{equation}\label{eq:SOD46872634872}
    \Perf \cX = \braket{ \cA^1,\ldots,\cA^\ell },
  \end{equation}
  there exists a unique base change
  \begin{equation}
    \Perf \cX_V = \braket{ \cA^1_\phi, \dots, \cA^\ell_\phi }
  \end{equation}
  of \eqref{eq:SOD46872634872} by $ \phi $.
\end{corollary}

\begin{proof}
  Note that $ V $ admits a finite affine Zariski open covering with affine intersections.
  Over each affine open subset of $ V $ one has the base change of \eqref{eq:SOD46872634872} by \Cref{proposition:base-change-sod}, and they coincide on the intersections by the uniqueness part of \Cref{proposition:base-change-sod}.
  Hence, by \Cref{proposition:zariski-gluing-lemma}, they uniquely glue together to produce a $ V $-linear semiorthogonal decomposition%
  \begin{equation}
    \Perf \cX_V = \braket{ \cA^1_\phi, \dots, \cA^\ell_\phi },
  \end{equation}
  which is a base change of \eqref{eq:SOD46872634872} by construction.

  Now let $ \Perf \cX_V=\langle \cB^1_\phi, \dots, \cB^\ell_\phi \rangle $
  be another base change of \eqref{eq:SOD46872634872}.
  Then again by the uniqueness part in \Cref{proposition:base-change-sod}
  it follows that $ \cA^i_\phi \subseteq \cB^i_\phi $ for all $ i $,
  which immediately implies the equality for all $ i $ by \cite[Lemma~3.2]{MR2801403}.
  Thus we obtain the uniqueness of the base change.
\end{proof}

\subsection{Comparing semiorthogonal decompositions on geometric points}
\label{subsec:geom-points}
We close this section by proving that whether or not two linear semiorthogonal decompositions coincide can be checked on geometric points.

\begin{lemma}\label{lm:coincidence of SODs can be checked at closed points}
  Let $ f \colon \cX \to U $ be a morphism of schemes as in \Cref{situation:main}.
  Suppose that
  \begin{equation}
    \Perf \cX = \langle \cA^1, \cB^1 \rangle, \quad \Perf \cX = \langle \cA^2, \cB^2 \rangle
  \end{equation}
  are $ U $-linear semiorthogonal decompositions whose base changes to all geometric points of $U$ coincide.
  Then they coincide.
  The same holds for semiorthogonal decompositions of length~$\ell\geq 2$.
\end{lemma}

\begin{proof}
  Take an arbitrary object $ E \in \cA^1$,
  and consider the decomposition triangle
  \begin{equation}
    \begin{tikzcd}
      b \arrow{r} & E \arrow{r} & a \arrow{r} & b [ 1 ]
    \end{tikzcd}
  \end{equation}
  with respect to the second semiorthogonal decomposition.
  The assumption implies that, for every geometric point $ u \colon \Spec K \to U$,
  one has that $ b \vert_{ \cX_u } = 0$.
  This implies that, for every geometric point $ x \in \cX$,
  one has $ \iota_x^* b = 0$,
  where $ \iota_x \colon \Spec \bfk ( x ) \to \cX $ is the canonical morphism.
  Since $ b $ is perfect, \Cref{lemma:derived-nakayama-global} implies that $ b = 0$.
  Thus we obtain $ E \cong a \in \cA^2$, so that $ \cA^1 \subseteq \cA^2 $.
  Exchanging the roles of $ \cA^1$ and $ \cA^2$, we obtain the other inclusion.

  The general case follows by reduction to the case of length~2:
  given a semiorthogonal decomposition~$\langle\cA^1,\ldots,\cA^\ell\rangle$
  we obtain a semiorthogonal decomposition~$\langle\cA^1,\langle\cA^2,\ldots,\cA^\ell\rangle\rangle$.
  Given two semiorthogonal decompositions of length~$\ell$
  we can apply this reduction,
  and the lemma for length~2 allows us to conclude that the first subcategories coincide.
  Finally, observe that
  by mutation we can let every subcategory~$\cA^i$ be the first component in
  a semiorthogonal decomposition of length~2,
  and we apply the same sequence of mutations to the second semiorthogonal decomposition of length~$\ell$
  to conclude.
\end{proof}

\section{The sheaf \texorpdfstring{$\sod_f^\ell$}{} of semiorthogonal decompositions}
\label{section:sod-functor-sheaf}

Thanks to the results of \Cref{section:base-change} (and \Cref{proposition:base-change-sod} in particular),
we can now define a functor of semiorthogonal decompositions,
and using \cite{MR4205113}
we will show that it is a sheaf on the big \'etale site of affine schemes,
extending the Zariski descent from \cref{proposition:zariski-gluing-lemma}.
We then define the moduli functor for arbitrary schemes by using the equivalence of toposes from \Cref{lemma:equivalence-of-toposes},
and confirm the natural expectation over quasicompact and semiseparated schemes in~\Cref{proposition:description-sod-f}, which will prove \Cref{theorem:main-intro}\ref{enumerate:main-intro-values}.

\subsection{Definition of the moduli functor and the sheaf property}
The following is the main definition of this paper.
\begin{definition}
  \label{definition:presheaf-sod-f}
  Let $ f \colon \cX \to U $ be a morphism of schemes as in \Cref{situation:main}.
  Fix an integer $\ell \geq 1$.
  Define the functor
  \begin{equation}
    \begin{tikzcd}
      \overline{\sod}^\ell_f \colon \Aff_U^{\op} \arrow{r} & \Sets
    \end{tikzcd}
  \end{equation}
  by sending an affine $U$-scheme $\phi\colon V\to U$ to the set
  \begin{equation}
    \overline{\sod}^\ell_f(\phi) =
    \Set{
      \bigl(\cA^1,\ldots,\cA^\ell\bigr) |
      \begin{array}{c}
        \Perf \Xcal_V  = \braket{\cA^1,\ldots,\cA^\ell} \textrm{ is a }V\textrm{\dash linear} \\
        \textrm{semiorthogonal decomposition}
      \end{array}
    }.
  \end{equation}
  The pullback maps are well-defined by \Cref{proposition:base-change-sod}.
\end{definition}

We quickly recall the following \'etale sites from \cite[\href{https://stacks.math.columbia.edu/tag/0214}{Tag~0214}]{stacks-project}.
Our goal is to extend \cref{definition:presheaf-sod-f}
to the following \'etale site.
\begin{definition}
  \label{definition:big-etale-site}
  The \emph{big \'etale site} $\Schet{ U }$ of a scheme $U$ consists of:
  \begin{enumerate}
    \item the category $ \Sch_U $ of (arbitrary) schemes over $ U $, equipped with
    \item \label{cov-et} the Grothendieck topology in which an \emph{\'etale covering} of an object $ (V \to U) \in \Sch_U $
      is a collection of \'etale morphisms $ \left( \pi_i \colon V_i \to V \right)_{ i \in I } $
      such that $ V = \bigcup_{ i \in I } \pi_i ( V_i )$.
  \end{enumerate}
\end{definition}
We will do this desired extension starting from the following site, denoted $ \Affet { U }$.
\begin{definition}
  \label{definition:affine-etale-site}
  The \emph{big affine \'etale site} $ \Affet{ U } $ of a scheme $U$ consists of:
  \begin{enumerate}
    \item
      the category $ \Aff_U $ of arbitrary \emph{affine} schemes over $ U $, equipped with
    \item
      the Grothendieck topology in which a
      \emph{standard \'etale covering}
      of an object $(V \to U) \in \Aff_{U}$ is an \'etale covering $ \left( \pi_i \colon V_i \to V \right)_{ i \in I } $ in the sense  of \Cref{definition:big-etale-site},
      such that $I$ is a finite set of indices and each $ V_i $ is affine.
  \end{enumerate}
\end{definition}
Given a site $\mathcal S$, we denote by $\Sh \mathcal S$ the associated topos,
i.e.,~the category of set-valued sheaves on $\mathcal S$.
The following equivalence of toposes will be important for the definition of the functors we will be working with.

\begin{lemma}[{\cite[\href{https://stacks.math.columbia.edu/tag/021E}{Tag~021E}]{stacks-project}}]
  \label{lemma:equivalence-of-toposes}
  Let $U$ be a scheme. The natural functor $ \Affet{ U } \to \Schet{ U }$ induces an equivalence of toposes
  \begin{equation}
    \begin{tikzcd}
      \Sh \Affet{ U }
      \arrow{r}{\sim} &
      \Sh \Schet{ U }.
    \end{tikzcd}
  \end{equation}
\end{lemma}

We can now state the main result of this section.

\begin{theorem}
  \label{theorem:sod-f-fpqc-sheaf}
  The functor $\overline{\sod}_f^\ell$ of \Cref{definition:presheaf-sod-f}
  is a sheaf on the big affine \'etale site $\left( \Aff_U \right)_{ \Et }$.
\end{theorem}

\begin{proof}
  This is a special case of the more general result \cite[Theorem 1.4]{MR4205113} by Antieau--Elmanto,\footnote{
    In the first version of this paper, we gave an argument for the sheaf property in characteristic 0,
    but this was surpassed by the characteristic-free argument cited above.
  }
  which in fact implies that $\overline{\sod}_f^\ell$ is a stack in the fppf topology.
  The poset one chooses in their setup is the totally ordered set~$[\ell]=\{1<\cdots<\ell\}$
  as in \cite[Proposition~3.8]{MR4205113}.
  Note that,
  as in the discussion following \cite[Theorem 1.4]{MR4205113},
  the fppf stack is actually discrete in nature,
  and thus an fppf sheaf.
\end{proof}

Note that in \cite{MR4205113} the assumption on the base~$U$
is that it is~1-affine \cite[Definition~1.3.7]{MR3381473},
which is implied
as in \cite[Warning~2.5]{MR4205113}
by taking~$U$ to be quasicompact and quasiseparated.

Thanks to \Cref{theorem:sod-f-fpqc-sheaf} and \cref{lemma:equivalence-of-toposes},
it is possible to give the following definition.

\begin{definition}
  \label{definition:sod-on-all-schemes}
  Let $ f \colon \cX \to U $ be a morphism of schemes as in \Cref{situation:main}.
  Fix an integer $\ell \geq 1$. We define
  \begin{equation}
    \begin{tikzcd}
      \sod_f^\ell \colon \Sch_U^{\op}\arrow{r} & \Sets
    \end{tikzcd}
  \end{equation}
  to be the \'etale sheaf corresponding to $\overline{\sod}_f^\ell$
  under the equivalence of toposes of \Cref{lemma:equivalence-of-toposes}.
  When $\ell=2$, we shall drop the superscript and write $\sod_f$ for $\sod_f^2$.
\end{definition}

\begin{remark}
  \label{remark:MR4205113}
  The main result of \cite{MR4205113}, namely that semiorthogonal decompositions satisfy fppf descent, is proved with the formalism of derived algebraic geometry, and in particular the categories of perfect complexes are considered as enhanced categories. By \cite[Theorem~1.2\,(2)]{MR2669705} and the restriction to perfect complexes for a smooth and proper morphism~$\cX\to U$ as discussed in \cite{MR3730514}, we obtain that this agrees with the approach in this paper.

  In what follows we will further amplify \Cref{theorem:sod-f-fpqc-sheaf} to show that, when $U$ is further assumed to be an excellent scheme,~${\sod}_f^\ell$ is an algebraic space which is \'etale over the base,
  which gives important geometric insight into the structure of this moduli space.
  As explained in \Cref{example:topological-sod} one cannot expect such a strong result in complete generality,
  despite semiorthogonal decompositions satisfying fppf descent in great generality.
\end{remark}

\subsection{Description of \texorpdfstring{$\sod_f^\ell$}{SOD} and \texorpdfstring{proof of \Cref{theorem:main-intro}\ref{enumerate:main-intro-values}}{its values}}
We have the following explicit description of the sheaf $\sod_f^\ell$.
For an arbitrary $ U $-scheme $\phi \colon V \to U$,
we let $\phi_* \colon \Aff_V \to \Aff_U $ be the functor
defined by sending $a \colon V' \to V$ to~$\phi \circ a \colon V' \to U$.
Then the value $\sod_f^\ell (\phi) $ is the limit set
\begin{equation}
  \label{eqn:explicit-sod}
  \sod_f^\ell (\phi)
  =
  \lim_{ \Aff_V^{\op} } \overline{\sod}_f^\ell \circ \phi_* = \lim_{(a\colon V' \to V) \in \Aff_V} \overline{\sod}_f^\ell(\phi \circ a).
\end{equation}

We confirm in \Cref{proposition:description-sod-f} that the value of the sheaf $ \sod_f^\ell \in \Sh  \Schet{ U } $
at a quasicompact and semiseparated $U$-scheme coincides with what we naively expect,
by spelling out the construction of the equivalence between toposes
from \Cref{lemma:equivalence-of-toposes}.

\begin{lemma}\label{lemma:restriction-coincides}
  There is a canonical isomorphism
  \begin{equation}
    \begin{tikzcd}
      \overline{\sod}_f^\ell \arrow{r}{\sim} &
      \sod_f^\ell \big|_{  \Aff_U^{\op} }
    \end{tikzcd}
  \end{equation}
  of functors $\Aff_U^{\op} \to \Sets$.
\end{lemma}

\begin{proof}
  For an object $\phi\colon V \to U$ of $\Aff_U \subset \Sch_{U}$,
  we have an obvious map
  \begin{equation}
    \label{eq:comparison map}
    \begin{tikzcd}
      \overline{\sod}_f^\ell (\phi) \arrow{r} & \displaystyle\lim_{(a\colon V' \to V) \in \Aff_V} \overline{\sod}_f^\ell(\phi\circ a) = \sod_f^\ell (\phi)
    \end{tikzcd}
  \end{equation}
  which yields a map of presheaves $ \overline{\sod}_f^\ell \to
  \sod_f^\ell|_{  \Aff_U^{\op} }$.
  Since the identity $ \id_V\colon V \to V$ is a final object of the category $ \Aff_V$,
  one can easily verify that \eqref{eq:comparison map} is a bijection.
\end{proof}

From this point onwards, in virtue of \Cref{lemma:restriction-coincides}, we will not make a notational distinction between $\overline{\sod}_f^\ell$ and $\sod_f^\ell$.

\smallbreak
We can now complete the proof of \Cref{theorem:main-intro}\ref{enumerate:main-intro-values},
describing the value of~$\sod_f^\ell$ on more general $U$-schemes~$V\to U$.

\begin{proposition}
  \label{proposition:description-sod-f}
  Let $ f \colon \cX \to U $ be a morphism of schemes as in \Cref{situation:main}.
  For every quasicompact and semiseparated $U$-scheme~$V$, there is a natural bijection
  \begin{equation}
    \sod_f^\ell(V\to U)
    \cong
    \Set{
      \begin{array}{c}
        V\textrm{-}\mathrm{linear~semiorthogonal} \\
        \mathrm{decompositions }\Perf \cX_V = \langle \cA^1, \ldots,\cA^\ell \rangle
      \end{array}
    }.
  \end{equation}
\end{proposition}

\begin{proof}
  Combine \Cref{lemma:restriction-coincides} with \Cref{proposition:zariski-gluing-lemma,corollary:base-change-sod}
  and the sheaf property of $\sod_f^\ell$.
\end{proof}

\section{The functor \texorpdfstring{$\dec_{\Delta_f}$}{DEC} of decompositions of the diagonal}
\label{section:sod-is-dec}

The aim of this section is to introduce an alternative description of the functor $ \sod_f \defeq \sod_f^2
$, denoted by $ \dec_{ \Delta_f }$. We start by fixing some notation.

\subsection{Basic construction via Fourier--Mukai functors}
\label{sec:FM}
Let~$U$ be a quasicompact and semiseparated scheme,
and let $f\colon \Xcal \to U$ be a smooth and proper morphism of schemes (i.e.~fix $f$  as in \Cref{situation:main}).
Let $\phi\colon V\to U$ be a morphism; we write $\Xcal_V = \Xcal\times_UV$ for the base change and $f_V\colon \Xcal_V \to V$ for the induced map, just as in \eqref{eq:cart_square}.
Set $\Ycal_V = \mathcal X_V\times_V\mathcal X_V$ and fix an object
\begin{equation}
  K \in \Perf \Ycal_V.
\end{equation}
We denote by $\Phi_K$ the associated \emph{Fourier--Mukai functor}, namely
\begin{equation}
  \label{FM_functor_K}
  \begin{tikzcd}[row sep=tiny]
    \Perf \mathcal X_V \arrow{r}{\Phi_K} & \Perf \mathcal X_V \\
    E \arrow[mapsto]{r} & \pr_{2\ast}(\pr_1^*(E)\otimes K).
  \end{tikzcd}
\end{equation}
Here, $\pr_i$ are the projections $\mathcal Y_V \to \mathcal X_V$. Note that we are implicitly using the fact that $\pr_{2\ast}$ preserves perfectness. To see this, note that $ \pr_2 $ is smooth and hence perfect. Since it is also proper, it follows that $ \pr_2 $ is quasiperfect (see \cite[Example~2.2]{MR2346195} and \cite[page~213]{MR2346195}), hence $\Phi_K$ does indeed land in $\Perf \mathcal X_V$.

\begin{notation}
  \label{notation:KE}
  Let $f$, $\phi$ and $K$ be as above. We denote by
  \begin{equation}
    \mathscr E_K \subseteq \Perf \Xcal_V
  \end{equation}
  the essential image of the Fourier--Mukai functor $\Phi_K$ defined in \eqref{FM_functor_K}. Furthermore, according to \Cref{notation:kar-env}, we denote by
  \begin{equation}
    \langle \mathscr E_K\rangle_{\pr_2} \defeq \langle \pr_{2}^{\ast} \mathscr E_K\rangle \subseteq \Perf \mathcal Y_V
  \end{equation}
  the Karoubian envelope of the triangulated hull of $ \pr_2^\ast \mathscr E_K \subseteq \Perf \mathcal Y_V$, namely the smallest strictly full $\cX_V$-linear triangulated subcategory of $\Perf \mathcal Y_V$ that is closed under taking direct summands and contains the class of objects of the form $\pr_2^\ast u$ for $u \in \mathscr E_K$.

\end{notation}

\subsection{Definition of the functor \texorpdfstring{$\dec_{\Delta_f}$}{DEC}}

In the situation of \Cref{sec:FM}, note that $\Ocal_{\Delta_{f_V}} \in \Perf \Ycal_V$ is a perfect complex because $f_V$ is smooth and separated (cf.~\Cref{lemma:Diagonal_Perfect}). We are now ready to define the functor $\dec_{\Delta_f}$.

\begin{definition}
  \label{def:DEC-functor}
  Let~$f \colon \cX \to U$ be a smooth and proper morphism of schemes.
  Define the functor
  \begin{equation}
    \begin{tikzcd}
      \dec_{\Delta_f}\colon \Aff_U^{\op} \arrow{r} & \Sets
    \end{tikzcd}
  \end{equation}
  by sending an affine $U$-scheme $\phi\colon V \to U$ to the set of isomorphism classes of distinguished triangles
  \begin{equation}
    \label{eq:decomposition zeta}
    \begin{tikzcd}
      \zeta\colon\qquad   K_2\arrow{r} & \Ocal_{\Delta_{f_V}} \arrow{r} & K_1 \arrow{r} & K_2[1]
    \end{tikzcd}
  \end{equation}
  in $\Perf \Ycal_V$, such that
  \begin{equation}
    \RRlHom_{f_V}(\mathscr E_{K_2},\mathscr E_{K_1}) = 0.
  \end{equation}
  Here, $\mathscr E_{\bullet}$ was defined in \Cref{notation:KE}, and a distinguished triangle as in \eqref{eq:decomposition zeta} is considered isomorphic to a second distinguished triangle
  \begin{equation}
    \begin{tikzcd}
      \zeta'\colon\qquad   K'_2\arrow{r} & \Ocal_{\Delta_{f_V}} \arrow{r} & K'_1 \arrow{r} & K'_2[1]
    \end{tikzcd}
  \end{equation}
  whenever there is an isomorphism of distinguished triangles
  \begin{equation}
    \label{eqn:iso-triangles}
    \begin{tikzcd}
      K_2\arrow{r}\isoarrow{d} & \Ocal_{\Delta_{f_V}} \arrow{r}\arrow[equal]{d} & K_1 \arrow{r}\isoarrow{d} & K_2[1]\isoarrow{d} \\
      K'_2\arrow{r} & \Ocal_{\Delta_{f_V}} \arrow{r} & K'_1 \arrow{r} & K'_2[1]
    \end{tikzcd}
  \end{equation}
  where we insist that the middle objects are connected by the identity morphism.
\end{definition}

By \Cref{lemma:uniqueness-decomposition-triangles}, there exists at most one isomorphism $\zeta \simto \zeta'$ and so the functor $\dec_{\Delta_f}$ is indeed set-valued. We call $\dec_{\Delta_f}$ the functor of \emph{decompositions of the diagonal}.

\subsection{\texorpdfstring{The equivalence $\sod_f = \dec_{\Delta_f}$}{The equivalence between SOD and DEC}}
\label{subsec:sod=dec}
Let $ f \colon \cX \to U $ be a morphism of schemes as in \Cref{situation:main}.
The aim of this section is to prove that the functors $\sod_f$ and $\dec_{\Delta_f}$ are naturally isomorphic. To this end, we shall construct natural transformations
\begin{equation}
  \begin{tikzcd}
    \dec_{\Delta_f} \arrow{r}{\delta} & \sod_f, \qquad \sod_f \arrow{r}{\sigma} & \dec_{\Delta_f},
  \end{tikzcd}
\end{equation}
inverse to each other. The former is constructed in the following lemma.

\begin{lemma}
  \label{lm:dec->sod}
  Let $ f \colon \cX \to U $ be a morphism of schemes as in \Cref{situation:main}.
  Let $\phi\colon V \to U$ be a morphism from an affine scheme. Then sending a decomposition $\zeta \in \dec_{ \Delta_f } ( \phi )$, given as \eqref{eq:decomposition zeta}, to the pair
  \begin{equation}
    (\langle\mathscr E_{ K_1 }\rangle, \langle\mathscr E_{ K_2 }\rangle), \qquad \mathscr E_{ K_i } \subseteq \Perf \mathcal X_V,
  \end{equation}
  defines a natural transformation $\delta\colon \dec_{\Delta_f} \to \sod_f$.
\end{lemma}

\begin{proof}
  We need to check that
  \begin{equation}
    ( \langle\mathscr E_{ K_1 }\rangle, \langle\mathscr E_{ K_2 }\rangle )
    \in
    \sod_f ( \phi ).
  \end{equation}
  First of all, we observe that the subcategories $\langle\mathscr E_{K_i}\rangle$, for $i=1,2$, are $V$-linear, simply by definition. The fact that they are semiorthogonal follows combining \Cref{lemma:semiorthogonality-via-local-Ext} with the vanishing $\RRlHom_{f_V}(\mathscr E_{K_2},\mathscr E_{K_1}) = 0$. To check that $\Perf \cX_V$ is generated by $\langle\mathscr E_{K_1}\rangle$ and $\langle\mathscr E_{K_2}\rangle$, take an object $F \in \Perf \cX_V$ and apply the composite functor $\pr_{2\ast}(\pr_1^\ast F \otimes -)$ to the distinguished triangle \eqref{eq:decomposition zeta} defining $\zeta$. This yields a distinguished triangle
  \begin{equation}
    \begin{tikzcd}
      \Phi_{K_2}(F) \arrow{r} & F \arrow{r} & \Phi_{K_1}(F) \arrow{r} & \Phi_{K_2}(F)[1],
    \end{tikzcd}
  \end{equation}
  with $\Phi_{K_i}(F) \in \mathscr E_{K_i}$ for $i = 1,2$.
  But then $\langle\mathscr E_{K_1}\rangle$ and $\langle\mathscr E_{K_2}\rangle$ generate $\Perf \cX_V$.
  Thus we conclude that~$(\langle\mathscr E_{K_1}\rangle, \langle\mathscr E_{K_2}\rangle)
  \in
  \sod_f ( \phi )$, as required. The compatibility of this assignment with base change is straightforward to check.
\end{proof}

Next, we construct a natural transformation $\sigma\colon \sod_f \to \dec_{ \Delta_f}$ in the opposite direction, following \cite[Theorem 7.1]{MR2801403}. This will be the inverse to the natural transformation $\delta$ obtained in \Cref{lm:dec->sod}.

\begin{lemma}
  \label{lm:sod->dec}
  Let $ f \colon \cX \to U $ be a morphism of schemes as in \Cref{situation:main}.
  There exists a natural transformation $\sigma\colon \sod_f \to \dec_{ \Delta_f}$.
\end{lemma}

\begin{proof}
  Let $\phi\colon V \to U$ be a morphism from an affine scheme. Fix a semiorthogonal decomposition
  \begin{equation}
    (\cA^1,\cA^2) \in \sod_f(\phi).
  \end{equation}
  Recall that we have set  $\Ycal_V = \Xcal_V \times_V \Xcal_V$. Using the faithfulness of $f_V\colon \cX_V \to V$ with respect to itself, we can form the $\Xcal_V$\dash linear pullback semiorthogonal decomposition
  \begin{equation}
    \label{SOD-Y_V}
    \Perf \Ycal_V = \braket{\cA^1_{\pr_2},\cA^2_{\pr_2}},
  \end{equation}
  where $\cA^i_{\pr_2} \defeq \langle \cA^i \rangle_{\pr_2} \defeq \langle \pr_2^\ast \cA^i \rangle \subseteq \Perf \Ycal_V$ denotes, just as in \Cref{proposition:base-change-sod}, the Karoubian envelope of the triangulated hull of $\pr_2^\ast \cA^i \subseteq \Perf \cX_V$ (cf.~\Cref{notation:kar-env}). Note that to form this (unique) base change, we are applying \Cref{corollary:base-change-sod} with $f$ replaced by $f_V$ and $\phi$ replaced by $f_V$ again, and the assumptions
  (namely that $\cX_V$ be quasicompact and semiseparated)
  are met by \Cref{enumerate:proper-over-affine-is-qc-sep} in \Cref{remark:properties}.

  Now, given the semiorthogonal decomposition \eqref{SOD-Y_V}, we can uniquely decompose the structure sheaf of the diagonal $\Ocal_{\Delta_{f_V}} \in \Perf \Ycal_V$
  by means of a distinguished triangle
  \begin{equation}
    \label{triangle_candidate}
    \begin{tikzcd}
      K_{2} \arrow{r} &  \Ocal_{\Delta_{f_V}} \arrow{r} &  K_{1} \arrow{r} &  K_{2} [1],
    \end{tikzcd}
  \end{equation}
  where $K_i \in \cA^i_{\pr_2} \subseteq \Perf \Ycal_V$.
  Sending $(\cA^1,\cA^2) \in \sod_f(\phi)$ to the distinguished triangle \eqref{triangle_candidate} is compatible with base change and thus defines the sought after natural transformation $\sigma\colon \sod_f \to \dec_{\Delta_f}$.
\end{proof}

\begin{lemma}\label{lm:sod->dec->sod is identity}
  Let $ f \colon \cX \to U $ be a morphism of schemes as in \Cref{situation:main}.
  Then the composite natural transformation $\delta\circ\sigma\colon \sod_f \to \dec_{\Delta_f} \to \sod_f$ is the identity.
\end{lemma}

\begin{proof}
  Let $\phi\colon V \to U$ be a morphism from an affine scheme. Fix a semiorthogonal decomposition
  \begin{equation}
    (\cA^1,\cA^2) \in \sod_f(\phi),
  \end{equation}
  so that we can form the distinguished triangle \eqref{triangle_candidate}.
  We have to prove that
  \begin{equation}
    ( \langle \mathscr E_{ K_{1} } \rangle, \langle \mathscr E_{ K_{2} } \rangle)
    =
    ( \cA^1, \cA^2 )
    \in
    \sod_f ( \phi ).
  \end{equation}
  By \cite[Lemma 3.2]{MR2801403}, it is enough to show that $\cA^i \subseteq \langle \mathscr E_{ K_{i} } \rangle$ for $i=1,2$. We do the case $i=1$, the other inclusion being proved similarly.

  Let~$a\in\cA^1$.
  The natural morphism~$a\to\Phi_{K_{1}}(a)$
  induced by the second morphism in \eqref{triangle_candidate}
  is an isomorphism by construction,
  and assumption on~$a$,
  showing that~$a\in \langle \mathscr E_{ K_{1} } \rangle$ as required.
\end{proof}

\begin{lemma}\label{lm:dec->sod->dec is identity}
  Let $ f \colon \cX \to U $ be a morphism of schemes as in \Cref{situation:main}.
  Then the composite natural transformation $\sigma\circ\delta\colon \dec_{\Delta_f} \to \sod_f \to \dec_{\Delta_f}$ is the identity.
\end{lemma}

\begin{proof}
  Let $\phi\colon V \to U$ be a morphism from an affine scheme. Fix a decomposition $ \zeta \in \dec_{ \Delta_f } (\phi) $ as in \eqref{eq:decomposition zeta}, and consider the associated semiorthogonal decomposition
  \begin{equation}
    \label{sod8749}
    ( \langle\mathscr E_{ K_1 }\rangle, \langle\mathscr E_{ K_2 }\rangle )
    \in
    \sod_f ( \phi )
  \end{equation}
  of $ \Perf \cX_V$.
  Note that we have equalities
  \begin{equation}
    \label{eqn:462}
    \langle\mathscr E_{K_1}\rangle_{\pr_2} = (\langle\mathscr E_{K_2}\rangle_{\pr_2})^\perp, \qquad \langle\mathscr E_{K_2}\rangle_{\pr_2} = {}^{ \perp }(\langle\mathscr E_{K_1}\rangle_{\pr_2}),
  \end{equation}
  as $\cX_V$-linear triangulated subcategories of $\Perf \cY_V$ (cf.~\Cref{notation:KE}).

  We need to show that the decomposition induced by \eqref{sod8749} is nothing but the decomposition $ \zeta $ we started with. Using \eqref{eqn:462}, we are reduced to proving the following assertions:
  \begin{equation}
    K_1 \in (\langle\mathscr E_{K_2}\rangle_{\pr_2})^\perp, \qquad K_2 \in {}^{ \perp }(\langle\mathscr E_{K_1}\rangle_{\pr_2}).
  \end{equation}
  By the definition of the category $\langle\mathscr E_{K_2}\rangle_{\pr_2}$ and the additivity and exactness of $\RRHom$, it is enough to show the vanishing
  \begin{equation}
    \label{eq:the vanishing to be confirmed}
    \RRHom_{\cY_V}(\pr_2^* F, K_1 )=0,
  \end{equation}
  for all $F \in \mathscr E_{K_2} \subseteq \Perf \cX_V$.%

  By definition of $\mathscr E_{K_2}$, for each such $F \in \mathscr E_{K_2}$ there exists a perfect complex $H \in \Perf \cX_V$ such that $F \cong \pr_{ 2 \ast } ( \pr_1^\ast H \otimes K_2 )$. Thus
  \begin{equation}
    \label{rhom-vanishing}
    \begin{split}
      \RRHom_{\cY_V}(\pr_2^\ast F, K_1 )
      &\cong\RRHom_{\cY_V}(\pr_2^\ast \pr_{ 2 \ast } ( \pr_1^\ast H \otimes K_2),K_1) \\
      &\cong\RRHom_{\cX_V}(\pr_{ 2 \ast } ( \pr_1^\ast H \otimes K_2),\pr_{ 2 \ast }K_1) \\
      &\cong\RRHom_{\cX_V}(F,\pr_{ 2 \ast }K_1),
    \end{split}
  \end{equation}
  where we have used the adjunction isomorphism \eqref{adjunction} for the second isomorphism.
  On the other hand, $\pr_{ 2 \ast }K_1 = \pr_{ 2 \ast }(\pr_1^\ast \mathcal O_{\cX_V} \otimes K_1) = \Phi_{K_1}(\mathcal O_{\cX_V}) \in \mathscr E_{K_1}$ and by definition of decomposition we have $\RRHom_{\cX_V}(\mathscr E_{K_2},\mathscr E_{K_1}) = 0$, therefore the last term in \eqref{rhom-vanishing} vanishes and \eqref{eq:the vanishing to be confirmed} is proved.
\end{proof}

\begin{theorem}\label{theorem:Iso_Functors_SOD_DEC} %
  Let $ f \colon \cX \to U $ be a morphism of schemes as in \Cref{situation:main}.
  Then $\dec_{\Delta_f}$ and $\sod_f$, as presheaves on $\Aff_U$, are isomorphic to each other.

  In particular, by \Cref{theorem:sod-f-fpqc-sheaf}, $ \dec_{\Delta_f} $ is a sheaf on the big affine \'etale site $ \Affet{ U }
  $.
\end{theorem}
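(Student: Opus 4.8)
The plan is to produce two mutually inverse natural transformations between $\dec_{\Delta_f}$ and $\sod_f$ as presheaves on $\Aff_U$, using the lemmas already proved above, and then to transport the sheaf property of $\sod_f$ along the resulting isomorphism. In one direction, I would send a decomposition $\zeta \in \dec_{\Delta_f}(\phi)$, given by a triangle $K_\cB \to \cO_{\Delta_{f_V}} \to K_\cA \to K_\cB[1]$, to the pair $(\mathscr E_{K_\cA},\mathscr E_{K_\cB})$, which lies in $\sod_f(\phi)$ by Lemma~\ref{lm:dec->sod}. In the other direction, I would send a pair $(\cA_\phi,\cB_\phi)\in\sod_f(\phi)$ to the triangle obtained by decomposing $\cO_{\Delta_{f_V}}$ with respect to the base-changed semiorthogonal decomposition $\Perf\Ycal_V=\langle\pr_2^*\cA_\phi,\pr_2^*\cB_\phi\rangle$ of Corollary~\ref{cr:base change of semiorthogonal decomposition}; this lies in $\dec_{\Delta_f}(\phi)$ by Lemma~\ref{lm:sod->dec->sod is identity}, which at the same time shows that the composite $\sod_f(\phi)\to\dec_{\Delta_f}(\phi)\to\sod_f(\phi)$ is the identity.

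The substantive point is that the other composite $\dec_{\Delta_f}(\phi)\to\sod_f(\phi)\to\dec_{\Delta_f}(\phi)$ is also the identity. Given $\zeta$ as above, I claim it suffices to prove $K_\cA\in\pr_2^*\mathscr E_{K_\cA}$ and $K_\cB\in\pr_2^*\mathscr E_{K_\cB}$ inside $\Perf\Ycal_V$: granting this, $\zeta$ is itself a decomposition of $\cO_{\Delta_{f_V}}$ with respect to $\langle\pr_2^*\mathscr E_{K_\cA},\pr_2^*\mathscr E_{K_\cB}\rangle$, and hence it agrees, via a unique isomorphism of triangles, with the triangle produced by the second map, by Lemma~\ref{lm:uniqueness of semiorthogonal decomposition of objects}; since $\dec_{\Delta_f}=\pi_0\,\cdec_{\Delta_f}$ and $\cdec_{\Delta_f}$ is fibred in setoids (Lemma~\ref{lm:at most 1 isomorphism}), this means $\zeta$ is sent back to $\zeta$.

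For the containment, the key observation is that the full subcategory of those $N\in\Perf\Ycal_V$ with $\Phi_N(E)\in\mathscr E_{K_\cA}$ for every $E\in\Perf\cX_V$ is thick, is stable under $(-)\otimes\pr_1^*\Perf\cX_V$, and contains $\pr_2^*\mathscr E_{K_\cA}$ — the last point via the projection formula, the $V$\dash linearity of the component $\mathscr E_{K_\cA}$, and the fact that $\RR f_{V\ast}$ preserves perfect complexes — hence contains the base-changed subcategory $\pr_2^*\mathscr E_{K_\cA}$; symmetrically for $\cB$. Now decompose $K_\cA$ as $N_\cB\to K_\cA\to N_\cA\to N_\cB[1]$ with respect to $\langle\pr_2^*\mathscr E_{K_\cA},\pr_2^*\mathscr E_{K_\cB}\rangle$ and apply the Fourier--Mukai functors of the kernels $N_\cA,N_\cB$: since $\Phi_{N_\cA}(E)\in\mathscr E_{K_\cA}$, $\Phi_{N_\cB}(E)\in\mathscr E_{K_\cB}$, and $\Phi_{K_\cA}(E)\in\mathscr E_{K_\cA}$ by the very definition of $\mathscr E_{K_\cA}$, the uniqueness of the decomposition of the object $\Phi_{K_\cA}(E)$ in $\langle\mathscr E_{K_\cA},\mathscr E_{K_\cB}\rangle$ forces $\Phi_{N_\cB}(E)=0$ for all $E$. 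Then $N_\cB=0$, because $\Perf\Ycal_V$ is generated as a thick subcategory by objects of the form $\pr_1^*E_1\otimes\pr_2^*E_2$ (an ampleness and resolution argument as in the proof of Lemma~\ref{lm:generator of the base change}, using that $\cO_{f_V}(1)$ is ample on $\cX_V$ and $\pr_1^*\cO_{f_V}(1)\otimes\pr_2^*\cO_{f_V}(1)$ is ample on $\Ycal_V$ since $V$ is affine), together with the identity $\RRHom_{\Ycal_V}(\pr_1^*E_1\otimes\pr_2^*E_2,N_\cB)\simeq\RRHom_{\cX_V}(E_2,\Phi_{N_\cB}(E_1^\vee))$ coming from adjunction and the projection formula. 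Thus $K_\cA\in\pr_2^*\mathscr E_{K_\cA}$, and likewise $K_\cB\in\pr_2^*\mathscr E_{K_\cB}$.

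Finally I would check that both transformations commute with pullback along a morphism $\psi\colon V'\to V$ of affine $U$\dash schemes: for the first, $\mathscr E_{\psi^*K_\cA}$ contains $\psi^*\mathscr E_{K_\cA}$ by flat base change for the Fourier--Mukai functor $\Phi_{K_\cA}$, hence equals the base change of $\mathscr E_{K_\cA}$ along $\psi$ by the uniqueness in Corollary~\ref{cr:base change of semiorthogonal decomposition}, which is exactly the pullback in $\sod_f$; for the second, base change of semiorthogonal decompositions is compatible with composition and $\psi^*\cO_{\Delta_{f_V}}=\cO_{\Delta_{f_{V'}}}$, so pulling back a decomposition triangle gives the decomposition triangle over $V'$. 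This yields the isomorphism $\dec_{\Delta_f}\simeq\sod_f$ of presheaves on $\Aff_U$, and the last assertion is then immediate from Theorem~\ref{sheaf_fpqc}. The main obstacle I expect is precisely the containment $K_\cA\in\pr_2^*\mathscr E_{K_\cA}$: the short identities above conceal the need to run the relative Fourier--Mukai formalism over the non-affine base $\cX_V$, so that the generation statement and the vanishing criterion underlying Lemma~\ref{lm:generator of the base change} have to be combined with some care.
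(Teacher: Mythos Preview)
Your proposal is correct and follows the same overall architecture as the paper: the two maps are exactly the ones in Lemmas~\ref{lm:dec->sod} and~\ref{lm:sod->dec->sod is identity}, and the reduction of the nontrivial composite to the containments $K_\cA\in\pr_2^*\mathscr E_{K_\cA}$ and $K_\cB\in\pr_2^*\mathscr E_{K_\cB}$ is identical.

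The only difference is in how that containment is established. You decompose $K_\cA$ with respect to $\langle\pr_2^*\mathscr E_{K_\cA},\pr_2^*\mathscr E_{K_\cB}\rangle$, show that the Fourier--Mukai functor of the $\cB$-component vanishes on all of $\Perf\cX_V$, and then kill that component via a generation argument on $\cY_V$. The paper instead verifies directly that $K_\cA\in(\pr_2^*\mathscr E_{K_\cB})^\perp$: by the explicit description of the base-changed subcategory it suffices to check $\RRHom_{\cY_V}(\pr_1^*G\otimes\pr_2^*F,K_\cA)=0$ for $F\in\mathscr E_{K_\cB}$, and writing $F\simeq\Phi_{K_\cB}(H)$ and applying adjunction along $\pr_2$ reduces this to $\RRHom_{\cX_V}(\Phi_{K_\cB}(H),\Phi_{K_\cA}(-))=0$, which is precisely the semiorthogonality from Lemma~\ref{lm:dec->sod}. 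This is shorter and avoids the generation-on-$\cY_V$ step you flagged as the main obstacle; your route works too, but the orthogonality computation is the more economical one.
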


\begin{proof}
  Combine \Cref{lm:sod->dec->sod is identity} and \Cref{lm:dec->sod->dec is identity} with one another.
\end{proof}

\subsection{Explicit description of \texorpdfstring{$\dec_{ \Delta_f}$}{DEC} via classical generators}
We next rephrase the functor $ \dec_{ \Delta_f} $ in a convenient way, using the notion of generator for triangulated categories.

\begin{definition}(\cite[\S2.1]{MR1996800})
  A collection of objects $ \mathcal G $ \emph{classically generates} a triangulated category $ \cT $ if $ \cT $ is the minimal strictly full triangulated subcategory of $ \cT $ which contains $ \mathcal G $ and is closed under taking direct summands.
  An object $ G $ \emph{classically generates} the category if $ \mathcal G = \{ G \} $ does.
\end{definition}

\begin{lemma}
  \label{lemma:bvdb-ko-p}
  Let $ \cX $ be a quasicompact and semiseparated scheme.
  Then
  \begin{enumerate}
    \item
      \label{item:cg-i}
      The category $ \Perf \cX $ admits a classical generator.
    \item
      \label{item:cg-ii}
      Every right or left admissible triangulated subcategory $ \cC \subseteq \Perf \cX $
      is classically generated by the image of the classical generator under the projection functor.
    \item
      \label{item:cg-iii}
      If $\psi\colon \cX' \to \cX$ is an affine morphism and $G \in \Perf \cX$ is a classical generator,
      then $\psi^\ast G \in \Perf \cX'$ is a classical generator.
  \end{enumerate}
\end{lemma}

\begin{proof}
  See \cite[Theorem~3.1.1]{MR1996800} for \cref{item:cg-i},
  \cite[Lemma~3.11]{1508.00682v2} for \cref{item:cg-ii} (the proof works verbatim for the semiseparated case),
  and \cite[Lemma 2.2]{MR4582884} for \cref{item:cg-iii}.
\end{proof}

This provides the following alternative description of the functor~$\dec_{\Delta_f}$, as it suffices to check vanishing of Ext's between generators to confirm semiorthogonality. This description will be used in the proof of \Cref{theorem:sod-f-lfp} given in \Cref{subsection:lfp-using-decompositions-of-diagonal}.

\begin{proposition}\label{pr:semiorthogonality via generator}
  Let $ f \colon \cX \to U $ be a morphism of schemes as in \Cref{situation:main}.
  Let $ \phi \colon V \to U $ be a morphism from an affine scheme, and fix a classical generator $ G \in \Perf \cX_V$.\footnote{Note that $\Perf \cX_{V}$ does indeed admit a classical generator: to see this, combine \Cref{lemma:bvdb-ko-p}\ref{item:cg-i} with \Cref{remark:properties}\ref{enumerate:proper-over-affine-is-qc-sep}.}
  Then
  \begin{equation}
    \dec_{\Delta_f}(\phi) =
    \Set{
      \begin{array}{c}
        \mathrm{distinguished~triangles~}K_2 \to \cO_{ \Delta_{f_V} } \to K_1 \to  K_2 [ 1 ]  \\
        \mathrm{~in~} \Perf \cY_V \mathrm{~such~that~}
        \RRlHom_{f_V}
        ( \Phi_{ K_2} ( G ), \Phi_{ K_1} ( G ) )
        = 0
      \end{array}
    } \Bigg{/} \emph{isomorphism},
  \end{equation}
  where the notion of isomorphism is the one spelled out in \eqref{eqn:iso-triangles}.
\end{proposition}

\section{The functor \texorpdfstring{$\sod_f$}{SOD} is limit-preserving}
\label{section:local-study}
The goal of this section is to prove that the functor $\sod_f=\sod_f^2$ of \Cref{definition:sod-on-all-schemes}
is limit-preserving.
This corresponds to \cref{item:limit-preserving} in \Cref{criterion:hall-rydh},
which appears in the proof of \Cref{theorem:sod-f}.

Recall, for any scheme $U$, the category $\Alg_U \subset \Sch_U^{\op}$ whose objects are pairs $(A,h)$ where $A$ is a ring and $h \colon \Spec A \to U$ is an affine morphism. We shall slightly abuse notation and write simply $A$ for an object of $\Alg_{U}$ throughout.
Next, recall the following definition, that we take from \cite[Definition~1.5]{MR0268188}.

\begin{definition}\label{definition:limit-preserving}
  Let~$U$ be a scheme.
  A functor~$\mathsf F \colon \Alg_U \to \Sets$
  is said to be \emph{limit-preserving}
  if for every functor
  \begin{equation}
    \label{functor-I}
    \begin{tikzcd}
      I \arrow{r} & \mathrm{Alg}_U, \quad i \mapsto A_{i}
    \end{tikzcd}
  \end{equation}
  from a directed set~$I$,
  the canonical map
  \begin{equation}
    \begin{tikzcd}
      \varinjlim \mathsf F(A_i)  \arrow{r} & \mathsf F( A )
    \end{tikzcd}
  \end{equation}
  is bijective,
  where we write~$A = \varinjlim_{ i \in I } A_i$.
  More generally, a functor~$\mathsf F \colon \Sch_U^{ \op } \to \Sets$
  is said to be limit-preserving
  if its composition with~$\Spec \colon \Alg_U \to \Sch_U^{ \op }$ is.
\end{definition}
This notion is called being \emph{locally of finite presentation} in \cite{MR0268188},
but \cref{definition:limit-preserving} is in line with \cite[\href{https://stacks.math.columbia.edu/tag/05LX}{Tag 05LX}]{stacks-project}.
The statement we will prove is the following.

\begin{theorem}
  \label{theorem:sod-f-lfp}
  Let $ f \colon \cX \to U $ be a morphism of schemes as in \Cref{situation:main}.
  Then the functor $\sod_f$ of \Cref{definition:presheaf-sod-f} is limit-preserving.
\end{theorem}
That the functor~$\sod_f^\ell$ is limit-preserving for all~$\ell$
will be obtained as part of the proof of \cref{theorem:sod-f}.

We present two proofs of \Cref{theorem:sod-f-lfp}. For both, we need the following preliminary lemma.

\begin{lemma}\label{lemma:0 homotopic at finite level}
  Let $(A_{i})_{i \in I}$ be a directed system of rings, and set $A =  \varinjlim_{ i \in I } A_i$. If a perfect complex $ P \in \Perf A_i$ has the property that
  $ P \otimes_{ A_i} A $ is acyclic,
  then there exists some~$j\geq i$ such that~$P\otimes_{A_i} A_j$ is acyclic.
\end{lemma}

\begin{proof}
  One has the following natural isomorphism
  for all finitely generated projective $ A_i $-modules $ M $ and $ N$,
  since both of them are direct summands of finitely generated free $ A_i $-modules:
  \begin{equation}
    \begin{tikzcd}
      \displaystyle\varinjlim_{ j \in I }
      \Hom_{ A_j } ( M \otimes_{ A_i } A_j, N \otimes_{ A_i } A_j )
      \arrow{r}{\sim} &
      \Hom_A ( M \otimes_{ A_i } A, N \otimes_{ A_i } A).
    \end{tikzcd}
  \end{equation}
  We can realize~$P$ using a bounded complex of projective~$A_i$-modules,
  hence the assumption on~$P$ is equivalent to this representative being contractible,
  and we obtain the existence of $ H \in \End^{ - 1 }_A ( P \otimes_{ A_i } A ) $
  such that $ d H + H d = \id_{ P \otimes_{ A_i } A }$.
  Since $P$ can be represented using a bounded complex of finitely generated projective $ A_i $-modules,
  it immediately follows from the observation above that $ H $
  lifts to some $ H_j \in \End^{ - 1 }_{ A_j } ( P \otimes_{ A_i } A_j ) $ for some $ j \ge i $,
  and moreover that the equality $ d H_j + H_j d = \id_{ P \otimes_{ A_i } A_j } $ holds possibly after replacing $ j $.
\end{proof}

\begin{corollary}\label{corollary:a priori isomorphy}
  Let
  \(
    f \colon \cX \to U
  \)
  be a flat and quasicompact morphism of schemes. Consider a directed system
  \(
    ( A_{i} )_{ i \in I }
  \)
  of
  \(
    U
  \)-rings, and set $A \defeq  \varinjlim_{ i \in I } A_i$ and $\cX_{i} \defeq \cX \times_{U}\Spec A_{i}$.
  Let
  \(
    \theta _{ i }
  \)
  be a morphism in the category
  \(
    \Perf \cX _{ i }
  \)
  whose pullback to
  \(
    \Perf \cX _{ A }
  \)
  is an isomorphism.
  Then there exists some~$j\geq i$ such that the pullback of~$\theta_i$ to~$\Perf \cX_j$ is an isomorphism.
\end{corollary}

\begin{proof}
  Being an isomorphism is equivalent to its cone being zero.
  The claim then follows from \Cref{lemma:0 homotopic at finite level}, since a perfect complex being acyclic is Zariski local and
  \(
    \cX _{ i }
  \)
  admits a finite affine open cover as it is quasicompact over the affine scheme
  \(
     \Spec A _{ i }
  \)
  and hence is itself quasicompact.
\end{proof}

\subsection{Proof of \texorpdfstring{\Cref{theorem:sod-f-lfp}}{limit-preserving} using classical generators}
\label{subsection:lfp-using-generators}

In this subsection we present a first proof of \Cref{theorem:sod-f-lfp}, using classical generators. This approach was suggested by the anonymous referee. Our original proof of \Cref{theorem:sod-f-lfp}, exploiting the functor $\dec_{\Delta_f}$, will be given in \Cref{subsection:lfp-using-decompositions-of-diagonal}.

\begin{proof}[Proof of \Cref{theorem:sod-f-lfp}]
  Because $U$ is quasicompact and semiseparated, and $f$ is proper, $\cX$ is also quasicompact and semiseparated.
  Therefore, by \Cref{lemma:bvdb-ko-p}\ref{item:cg-i},
  we can fix a classical generator~$G\in\Perf\cX$.
  Fix a functor as in \eqref{functor-I}.
  We need to prove that the natural map
  \begin{equation}
    \label{equation:natural-map-lfp}
    \begin{tikzcd}
      \varinjlim \sod_{f}(A_{i}) \arrow{r} & \sod_{f}(A)
    \end{tikzcd}
  \end{equation}
  is bijective.
  To fix notation, consider the cartesian diagrams
  \begin{equation}
    \label{eqn:2pullbacks}
    \begin{tikzcd}
      \Xcal_A\MySymb{dr}\arrow[swap]{d}{f_A}\arrow{r}{\rho_i} &
      \Xcal_i\MySymb{dr}\arrow{d}{f_i}\arrow{r} &
      \Xcal\arrow{d}{f} \\
      \Spec A\arrow{r}{\eta_i} &
      \Spec A_i\arrow{r} &
      U
    \end{tikzcd}
  \end{equation}
  defining the schemes $\cX_i \defeq \cX \times_U\Spec A_i$ and $\cX_A\defeq \cX \times_U \Spec A$.
  Let~$G_i$, resp.~$G_A$ denote the pullbacks of $G$ to~$\cX_i$, resp.~$\cX_A$.
  By \Cref{lemma:bvdb-ko-p}\ref{item:cg-ii},
  they are again classical generators,
  because~$\Spec A_i\to U$, resp.~$\Spec A\to U$ (and hence their pullbacks $\cX_i \to \cX$, resp.~$\cX_A \to \cX$) are affine.
  Consider the decomposition triangle
  \begin{equation}
    \begin{tikzcd}
      b\arrow{r} & G_A\arrow{r} & a\arrow{r} & b[1]
    \end{tikzcd}
  \end{equation}
  induced by a semiorthogonal decomposition~$\Perf\cX_A=\langle\cA_A,\cB_A\rangle$.
  By \Cref{proposition:3.3}
  we can first approximate~$b$ by some~$b_i$,
  and by \cref{proposition:3.2}
  we can lift the morphism~$b\to G_A$ to a morphism~$b_i\to G_i$
  for an appropriate~$i\in I$.
  Let~$a_i\defeq \cone(b_i\to G_i)$.
  Using the notation in \eqref{eqn:2pullbacks}, we have that
  \begin{equation}
    \rho_i^*(a_i)
    =\rho_i^*(\cone(b_i\to G_i))
    \cong\cone(\rho_i^*(b_i)\to\rho_i^*(G_i))
    \cong\cone(b\to G_A)
    =a,
  \end{equation}
  and thus by base change along \eqref{eqn:2pullbacks} we have
  \begin{equation}
    \begin{aligned}
      \eta_i^*\RRlHom_{f_i}(b_i,a_i)
      &\cong \RRlHom_{f_A}(\rho_i^*(b_i),\rho_i^*(a_i)) \\
      &\cong \RRlHom_{f_A}(b,a) \\
      &=0.
    \end{aligned}
  \end{equation}
  By \Cref{lemma:0 homotopic at finite level}
  applied to the perfect complex $\RRlHom_{f_i}(b_i,a_i)$ there exists a~$j\in I$
  such that $j \geq i$ and the pullbacks~$a_j\defeq a_{i}|_{\cX_{j}}$ and~$b_j\defeq b_{i}|_{\cX_{j}}$ of~$a_i$ and~$b_i$
  are semiorthogonal in~$\Perf\cX_j$.

  On the other hand,
  pulling back to $\cX_{j}$ the triangle~$b_i\to G_i\to a_i\to b_i[1]$
  to $\Perf \cX_{j}$ we obtain the triangle
  \begin{equation}
    \begin{tikzcd}
      b_j\arrow{r} & G_j\arrow{r} & a_j\arrow{r} & b_j[1].
    \end{tikzcd}
  \end{equation}
  Because~$G_j$ classically generates~$\Perf\cX_j$, we obtain that~$\cA_j\defeq\langle a_j\rangle$
  and~$\cB_j\defeq\langle b_j\rangle$
  give a semiorthogonal decomposition~$\Perf\cX_j=\langle\cA_j,\cB_j\rangle$.
  Finally,
  since we have that~$\rho_j^*(a_j)\cong\rho_i^*(a_i)\cong a\in\cA_A$,
  and similarly~$\rho_j^*(b_j)\cong\rho_i^*(b_i)\cong b\in\cB_A$,
  we obtain that
  $\cA_A=\langle\cA_j\rangle_{\eta_j}$
  and~$\cB_A=\langle\cB_j\rangle_{\eta_j}$,
  as required.
  This proves surjectivity of \eqref{equation:natural-map-lfp}.

  To prove injectivity,
  we will consider two semiorthogonal decompositions~$\langle\cA_i,\cB_i\rangle$
  and~$\langle\cA_i',\cB_i'\rangle$
  in~$\sod_f(A_i)$,
  for some~$i\in I$,
  and we assume that they are identified in~$\sod_f(A)$.

  For an index $k\geq i$, we denote by $\cB_{k}$, resp.~$\cB'_{k}$, the base change of $\cB_{i}$, resp.~$\cB'_{i}$, along $\Spec A_{k} \to \Spec A_{i}$.

  Let~$G_i\in\Perf\cX_i$ be a classical generator,
  and let~$b_i\in\cB_i$ be the projection,
  which is a classical generator by \cref{lemma:bvdb-ko-p}\ref{item:cg-ii}.
  We will show that there exists some~$k\geq i$
  for which~$b_i|_{\cX_k}\in\cB_k'$,
  so that the argument where we exchange the semiorthogonal decompositions
  shows that~$\cB_k=\cB_k'$.

  By the construction of the base change of the semiorthogonal decomposition~$\langle\cA_i',\cB_i'\rangle$ to~$\Spec A$ (see \eqref{def:A_phi})
  we have that~$\rho_i^*(b_i)$
  can be realized (up to taking direct summands) by taking iterated cones of morphisms between
  objects of the form~$\rho_i^*(b_i')$
  where~$b_i'\in\cB_i'$.
  These morphisms (in~$\Perf\cX_A$)
  can be lifted to morphisms in~$\Perf\cX_{A_j}$ for some~$j\geq i$
  by \Cref{proposition:3.2}.
  Thus we obtain an object $b _{ j } ' \in \cB_j'$
  such that $\rho_i^*(b_i)$ is a direct summand of
  \(
    \rho _{ j } ^{ \ast } ( b _{ j } ' )
  \).
  Let
  \(
    p
  \)
  be the endomorphism of
  \(
    \rho _{ j } ^{ \ast } ( b _{ j } ' )
  \)
  such that
  \(
    \rho _{ i } ^{ \ast } ( b _{ i } )
    =
    \im ( p )
  \).
  Again by \Cref{proposition:3.2}, replacing $j$ by a larger index if necessary, there exists a lift
  \(
    p _{ j } \in \End ( b _{ j } ' )
  \)
  of
  \(
    p
  \)
  which by \Cref{corollary:bijective} also satisfies the idempotency
  \(
    p _{ j } ^{ 2 }
    =
    p _{ j }
  \).
  Since
  \(
    \Perf \cX _{ j }
  \)
  is Karoubian, there exists a direct sum decomposition
  \(
    b _{ j } '
    =
    \im ( p _{ j } )
    \oplus
    \im ( 1 - p _{ j } )
  \)
  and hence
  \(
    \rho _{ j } ^{ \ast } \im ( p _{ j } )
    \simeq
    \im ( p )
    =
    \rho _{ i } ^{ \ast } ( b _{ i } )
  \).
  Note that
  \(
    \cB _{ j } '
  \)
  is closed under direct summands, hence
  \(
    \im ( p _{ j } )
    \in
    \cB _{ j } '
  \).
  Finally, again by \Cref{corollary:bijective} and \Cref{corollary:a priori isomorphy} there exists some~$k\geq j$ such that
  \(
    \im ( p _{ j } ) \vert _{ \cX _{ k } }
    \simeq
    b _{ i } \vert _{ \cX _{ k } }
  \)
  and hence the conclusion.
\end{proof}

\subsection{Proof of \texorpdfstring{\Cref{theorem:sod-f-lfp}}{limit-preserving} using decompositions of the diagonal}
\label{subsection:lfp-using-decompositions-of-diagonal}
In this section we give another proof of \Cref{theorem:sod-f-lfp}, exploiting \Cref{theorem:Iso_Functors_SOD_DEC}. To this end, we now introduce an auxiliary functor.

\begin{definition}
  Let $g\colon \cY \to U$ be a smooth and proper morphism of schemes.
  Let~$F\in\Perf\cY$.
  Define the functor
  \begin{equation}
    \label{functor_MOR}
    \begin{tikzcd}
      \MOR_{F}\colon \Sch_U^{\op} \arrow{r} & \Sets
    \end{tikzcd}
  \end{equation}
  by sending a $U$-scheme $V\to U$ to the set
  \begin{equation}
    \MOR_{F}(V \to U) \defeq
    \Set{\textrm{morphisms }K \xrightarrow{s} F_V \textrm{ in }\Perf \Ycal_V}\Big{/}\simeq
  \end{equation}
  where $F_V$ is the pullback of $F$ along the projection $\Ycal_V \to \Ycal$ and an isomorphism between
  $s\colon K \to F_V$ and $s'\colon K' \to F_V$ is an isomorphism $\theta\colon K\to K'$ such that $s'\circ\theta = s$.
\end{definition}

\begin{theorem}
  \label{theorem:Mor-limit-preserving}
  The functor $\MOR_{F}$ is limit-preserving.
\end{theorem}

In the proof, we shall use the following notation. For a $U$-algebra $B$, we set $\cY_{B} \defeq \cY \times_{U}\Spec B$, $F_{B}\defeq F|_{\cY_{B}}$ and $\MOR_{F}(B) \defeq \MOR_{F}(\Spec B \to U)$.

\begin{proof}
  Let $(A_{i})_{i \in I}$ be a directed system of $U$-algebras, and set $A=\varinjlim A_{i}$. We need to prove that the natural map
  \begin{equation}
    \label{mor-limit}
    \begin{tikzcd}
      \displaystyle \varinjlim \MOR_{F}(A_{i}) \arrow{r} & \MOR_{F}(A)
    \end{tikzcd}
  \end{equation}
  is bijective.

  To prove surjectivity of \eqref{mor-limit}, we fix an equivalence class
  \begin{equation}
    \begin{tikzcd}
      {[} K \arrow{r}{s} & F_{A} {]}\,\in\,\MOR_{F}(A)
    \end{tikzcd}
  \end{equation}
  of a morphism $s$ in $\Perf \cY_{A}$.
  We need to confirm that there exists an index $i \in I$ such that $s$ is, up to isomorphism, the restriction of a morphism
  \begin{equation}
    \begin{tikzcd}
      K_{i} \arrow{r}{s_{i}} & F_{A_{i}} \,\,\,\textrm{in }\Perf \cY_{A_{i}}
    \end{tikzcd}
  \end{equation}
  along the natural map $\cY_{A} \to \cY_{A_{i}}$. By \Cref{proposition:3.3}, we can construct a lift $K_{j}$ of $K$ over $\cY_{j}$ for some $j \in I$. In particular, we have $K \cong  K_{j}|_{\cY_{A}}$. By means of \Cref{proposition:3.2}, we can now lift $s$, viewed as a morphism $K_{j}|_{\cY_{A}} \to (F_{A_{j}} )|_{\cY_{A}}$, to a morphism $K_{j}|_{\cY_{A_{i}}} \to F_{A_{i}}$ for some $i \geq j$.

  To prove injectivity of \eqref{mor-limit}, suppose there is an index $i \in I$ and two isomorphism classes
  \begin{equation}
    \begin{tikzcd}
      {[}K_{i} \arrow{r}{s_{i}} & F_{A_{i}} {]}, \,\,\,   {[}K'_{i} \arrow{r}{s'_{i}} & F_{A_{i}} {]}\in\,\displaystyle \varinjlim \MOR_{F}(A_{i})
    \end{tikzcd}
  \end{equation}
  such that the restrictions of $s_{i}$ and $s'_{i}$ to $\cY_{A}$ define the same element in $\MOR_{F}(A)$. This means that, if we set $K = K_{i}|_{\cY_{A}}$ and $K' = K'_{i}|_{\cY_{A}}$, there exists an isomorphism $\theta \colon K \simto K'$ in $\Perf \cY_{A}$ such that $s' \circ \theta = s$, where $s = s_{i}|_{\cY_{A}}$ and $s' = s'_{i}|_{\cY_{A}}$. We need to prove that there is an index $j \geq i$ and an isomorphism
  \begin{equation}
    \label{wanted-iso-j}
    \begin{tikzcd}
      \theta_{j}\colon K_{j} \arrow{r}{\sim} & K'_{j}
    \end{tikzcd}
  \end{equation}
  in $\Perf \cY_{A_{j}}$ such that
  \begin{equation}
    \label{wanted-diag-j}
    \begin{tikzcd}
      K_{j} \arrow{rr}{\theta_{j}}\arrow[swap]{dr}{s_{j}} && K'_{j}\arrow{dl}{s'_{j}}  \\
      & F_{A_{j}}&
    \end{tikzcd}
  \end{equation}
  commutes, where we have set $K_{j} \defeq K_{i}|_{\cY_{A_{j}}}$ and $K'_{j} \defeq K'_{i}|_{\cY_{A_{j}}}$ and similarly $s_{j}\defeq s_{i}|_{\cY_{A_{j}}}$ and $s'_{j}\defeq s'_{i}|_{\cY_{A_{j}}}$. The existence of the lift $\theta_{j}$ of $\theta$ in \eqref{wanted-iso-j} follows either from \cite[Proposition 2.2.1]{MR2177199} or from the combination of \cref{proposition:3.3} and \Cref{corollary:a priori isomorphy}. The sought after compatibility \eqref{wanted-diag-j}, after possibly replacing $j$ with some $j'\geq j$, follows from the bijectivity in \Cref{corollary:bijective}.
  \end{proof}

Consider now our usual setup, namely a smooth and proper morphism of schemes $f \colon \cX \to U$. Set $\cY = \cX \times_U \cX$. For a direct limit of $U$-algebras $A = \varinjlim A_i$ we consider the fibre diagram
\begin{equation}
  \label{big-diag}
  \begin{tikzcd}
    \Ycal_A\MySymb{dr}\arrow{d}\arrow{r} &
    \Ycal_i\MySymb{dr}\arrow{d}\arrow{r} &
    \Ycal\arrow{d} \\
    \Xcal_A\MySymb{dr}\arrow[swap]{d}{f_A}\arrow{r}{\rho_i} &
    \Xcal_i\MySymb{dr}\arrow{d}{f_i}\arrow{r} &
    \Xcal\arrow{d}{f} \\
    \Spec A\arrow{r}{\eta_i} &
    \Spec A_i\arrow{r} &
    U
  \end{tikzcd}
\end{equation}
along with the perfect complex
\begin{equation}
  \Ocal_{\Delta_f} \in \Perf \Ycal,
\end{equation}
so that the functor $\MOR_{\Ocal_{\Delta_f}}$ from \eqref{functor_MOR} is well-defined.

\begin{proof}[Proof of \Cref{theorem:sod-f-lfp}]
  By \Cref{theorem:Iso_Functors_SOD_DEC}, we are reduced to proving that $\dec_{\Delta_{f}}$ is limit-preserving.
  The functor $\dec_{\Delta_f}$ is a subfunctor of the functor $\MOR_{\Ocal_{\Delta_f}}$,
  which is limit-preserving
  by \Cref{theorem:Mor-limit-preserving}. Then we have the following diagram of sets
  \begin{equation}
    \label{diag:DEC_F}
    \begin{tikzcd}
      \varinjlim \dec_{\Delta_f}(A_i) \arrow[hook]{d}\arrow{r}{\tau} &
      \dec_{\Delta_f}(A)\arrow[hook]{d} \\
      \varinjlim \MOR_{\Ocal_{\Delta_f}}(A_i) \arrow{r}{\sim} &
      \MOR_{\Ocal_{\Delta_f}}(A)
    \end{tikzcd}
  \end{equation}
  implying the injectivity of the map $\tau$. We need to show its surjectivity.
  Pick an element
  \begin{equation}
    \begin{tikzcd}
      \zeta\colon
      K \arrow{r}{s} & \Ocal_{\Delta_{f_A}} \arrow{r} &  L \arrow{r} & K [ 1 ]
    \end{tikzcd}
  \end{equation}
  of~$\dec_{\Delta_f}(A)$.
  We know, by definition of~$\dec_{\Delta_f}$, that
  \begin{equation}
    \RRlHom_{f_A}(\mathscr E_K,\mathscr E_L) = 0.
  \end{equation}
  Now use Diagram \eqref{diag:DEC_F} to lift $\zeta$ to an element
  \begin{equation}
    \begin{tikzcd}
      \zeta_i
      \colon
      K_i \arrow{r}{s_{i}} & \Ocal_{\Delta_{f_i}} \arrow{r} & L_i \arrow{r} &  K_i [ 1 ]
    \end{tikzcd}
  \end{equation}
  of~$\MOR_{\Ocal_{\Delta_f}}(A_i)$,
  for some $i$.
  To show that $\zeta_i$ can be lifted to an element of $\dec_{\Delta_f}(A_j)$ for some $j \geq i$, fix a classical generator $G_{A_i}$ of $ \Perf \cY_i $ (note that $ \cY_i $ is quasicompact and semiseparated so $G_{A_{i}}$ exists by \Cref{lemma:bvdb-ko-p}\ref{item:cg-i}), and define the two objects
  \begin{equation}
    k_i \defeq \Phi_{ K_i } ( G_{ A_i} )\in\mathscr E_{K_i} \subseteq \Perf \Xcal_i,
    \quad
    l_i \defeq \Phi_{ L_i } ( G_{ A_i} )
    \in \mathscr E_{L_i}\subseteq \Perf \Xcal_i.
  \end{equation}
  Consider their (derived) pullbacks
  \begin{equation}
    \rho_i^*k_i \in \mathscr E_K \subseteq \Perf \Xcal_A,\quad  \rho_i^*l_i \in \mathscr E_L \subseteq \Perf \Xcal_A.
  \end{equation}
  Then we know by \Cref{pr:semiorthogonality via generator}, and base change along \eqref{big-diag}, that
  \begin{equation}
    0 = \RRlHom_{f_A}(\rho_i^*k_i,\rho_i^*l_i)
    =  f_{A,\ast}\rho_i^*\RRlHom_{\Xcal_i}(k_i,l_i)
    \cong  \eta_i^\ast  \RRlHom_{f_i}(k_i,l_i).
  \end{equation}
  Note that $ P \defeq \RRlHom_{f_i}(k_i,l_i)\in \Perf A_i$,
  since $ f_{ i,\ast } $ preserves perfect complexes (see the proof of \Cref{lemma:Diagonal_Perfect}).
  Hence by applying \Cref{lemma:0 homotopic at finite level} to the perfect complex $ P $ above, it follows that
  \begin{equation}
    \label{eq:0 at the level j}
    0
    =
    \RRlHom_{f_i}(k_i,l_i) \otimes_{ A_i} A_j
  \end{equation}
  for some $ j \ge i$.

  Let $ \rho_{ i,j } \colon \cX_j \to \cX_i $ be the base change of $ \eta_{ i,j } \colon \Spec A_j \to \Spec A_i$.
  Then the right-hand side of \eqref{eq:0 at the level j} can be computed as follows, by repeatedly using the base change isomorphisms
  \begin{equation}
    \RRlHom_{f_j}(\rho_{ i,j }^* \Phi_{ K_i} ( G_{ A_i}), \rho_{ i,j }^{\ast} \Phi_{ L_i} ( G_{ A_i})
    )
    \cong
    \RRlHom_{f_j}(\Phi_{ K_j} ( G_{ A_j}),\Phi_{ L_j} ( G_{ A_j})).
  \end{equation}
  Here $ K_j = K_i \otimes_{ A_i} A_j $ and $ L_j = L_i \otimes_{ A_i} A_j$,
  respectively.
  By \Cref{lemma:bvdb-ko-p}\ref{item:cg-iii}, this, in turn, implies that $ \zeta_i \otimes_{ A_i} A_j\in\dec_{ \Delta_f } ( A_j )$.
\end{proof}

\section{Deformations of semiorthogonal decompositions}
\label{section:deform_sod}

The goal of this section
is to prove \cref{theorem:deforming-sods}
on the existence and uniqueness of (formal) deformations of semiorthogonal decompositions.

The approach taken in \cref{subsection:extending-sods}
using filtrations
was suggested by the referee.
It provides an alternative to the original and more involved argument,
which used the perspective of Fourier--Mukai transforms,
the comparison from \cref{subsec:sod=dec},
and the deformation theory of morphisms with fixed lift of the argument
relative to deformations of abelian categories.
This alternative approach,
and the necessary deformation theory
which is of independent interest,
is included as an application of these
deformation-theoretical results in \cite{former-appendix}.

\subsection{Extending semiorthogonal decompositions to infinitesimal neighbourhoods}
\label{subsection:extending-sods}
Let~$(R,\mathfrak{m},\bfk)$ be a local noetherian ring.
Let~$\cX\to\Spec R$ be a smooth and proper morphism.
For~$n\geq 0$
we write~$R_n\defeq R/\mathfrak{m}^{n+1}$,
so that~$R_0=\bfk$.
Set~$\cX_n\defeq\cX\times_{\Spec R}\Spec R_n$,
and let
\begin{equation}
  \begin{tikzcd}
    i\colon\cX_0\arrow[hook]{r} & \cX_n
  \end{tikzcd}
\end{equation}
be the natural embedding.

First,
we study~$\derived^\bounded(\cX_n)$
for a fixed~$n\geq 0$.
\begin{lemma}
  \label{lemma:4.1}
  Let~$n\geq 0$.
  For any~$F\in\derived^\bounded(\cX_n)$
  there exists
  a chain of maps
  \begin{equation}
    \begin{tikzcd}
      0=F_{-1}\arrow{r} & F_0\arrow{r} &\cdots\arrow{r} & F_{n-1}\arrow{r} & F_n=F
    \end{tikzcd}
  \end{equation}
  in~$\derived^\bounded(\cX_n)$,
  such that for all~$k=0,\ldots,n$
  there exists an isomorphism~$\cone(F_{k-1}\to F_k)\cong i_* i^*F$.
\end{lemma}

\begin{proof}
  Consider the filtration of the structure sheaf of the diagonal
  \begin{equation}
    \begin{tikzcd}
      0
      \arrow[hook]{r} & \cO_{\Delta_{\cX_0}}
      \arrow[hook]{r} & \cO_{\Delta_{\cX_1}}
      \arrow[hook]{r} & \cdots
      \arrow[hook]{r} & \cO_{\Delta_{\cX_{n-1}}}
      \arrow[hook]{r} & \cO_{\Delta_{\cX_n}}
    \end{tikzcd}
  \end{equation}
  with all factors isomorphic to~$\cO_{\Delta_{\cX_0}}$.
  Applying the corresponding Fourier--Mukai functors to~$F$,
  we obtain the required filtration.
\end{proof}

\begin{lemma}
  \label{lemma:4.2}
  There exists a sequence of functors~$\Phi_k\colon\Perf\cX_0\to\Perf\cX_0$,
  for~$k\geq 0$,
  such that
  for all~$F\in\Perf\cX_0$
  we have that
  \begin{equation}
    \label{equation:4.3}
    i^* i_*F
    \cong
    \hocolim\Phi_k(F).
  \end{equation}
  Moreover,
  $\Phi_k$ preserves any triangulated subcategory of~$\Perf\cX_0$.
\end{lemma}

\begin{proof}
  The functor~$i^* i_*\colon\derived(\cX_0)\to\derived(\cX_0)$ is a Fourier--Mukai functor,
  whose kernel is given by
  \begin{equation}
    \eta^*\eta_*\cO_{\Delta_{\cX_0}}
    \in
    \derived_{\mathrm{coh}}^-(\mathcal O_{\cX_0\times_{\bfk}\cX_0}),
  \end{equation}
  where~$\eta\colon\cX_0\times_{\bfk}\cX_0\hookrightarrow\cX_n\times_{\Spec R_n}\cX_n$ is the natural embedding.
  For~$k\geq 0$ we define
  \begin{equation}
    K_k\defeq\tau^{\geq-k}(\eta^*\eta_*\cO_{\Delta_{\cX_0}})
    \in
    \derived^\bounded(\cX_0\times_{\bfk}\cX_0),
  \end{equation}
  the canonical truncation of~$\eta^*\eta_*\cO_{\Delta_{\cX_0}}$.
  Then
  \begin{equation}
    \eta^*\eta_*\cO_{\Delta_{\cX_0}}\cong\hocolim K_k,
  \end{equation}
  hence \eqref{equation:4.3} holds,
  if we define~$\Phi_k$
  as the Fourier--Mukai functor with kernel~$K_k$.

  It remains to show that~$\Phi_k$ preserves any triangulated subcategory of~$\Perf\cX_0$
  (and in particular, preserves~$\Perf\cX_0$ itself).
  For this,
  note that there are distinguished triangles
  \begin{equation}
    \begin{tikzcd}
      \mathrm{L}_k\eta^*(\eta_*\cO_{\Delta_{\cX_0}})[k]
      \arrow{r} & K_k
      \arrow{r} & K_{k-1}
      \arrow{r} &\mathrm{L}_k\eta^*(\eta_*\cO_{\Delta_{\cX_0}})[k+1]
    \end{tikzcd}
  \end{equation}
  for~$k\geq 0$, with~$K_{-1}=0$.
  Hence, by induction on~$k$,
  using the triangle of Fourier--Mukai functors
  induced by a triangle of kernels,
  it is enough to check that the Fourier--Mukai functor
  with kernel~$\mathrm{L}_k\eta^*(\eta_*\cO_{\Delta_{\cX_0}})$
  preserves any triangulated subcategory of~$\Perf\cX_0$.
  By the projection formula \cite[Proposition~3.9.4]{MR2490557}
  we have that
  \begin{equation}
    \eta_*\left( \mathbf{L}\eta^*\eta_*\cO_{\Delta_{\cX_0}} \right)
    \cong
    \eta_*\left( \mathbf{L}\eta^*\eta_*\cO_{\Delta_{\cX_0}}\otimes\cO_{\cX_0\times_{\bfk}\cX_0} \right)
    \cong
    \eta_*\cO_{\Delta_{\cX_0}}\otimes^{\mathbf{L}}\eta_*\cO_{\cX_0\times_{\bfk}\cX_0}.
  \end{equation}
  Taking the~$k$th cohomology object on the right,
  we obtain
  \begin{equation}
    \eta_*\mathrm{L}_k\eta^*(\eta_*\cO_{\Delta_{\cX_0}})
    \cong
    \operatorname{Tor}_k^{\cO_{\cX_n\times_{\Spec R_n}\cX_n}}(\eta_*\cO_{\Delta_{\cX_0}},\eta_*\cO_{\cX_0\times_{\bfk}\cX_0}).
  \end{equation}
  Moreover,
  we have that~$\eta_*\cO_{\cX_0\times_{\bfk}\cX_0}\cong f_n^*R_0$,
  where we write~$f_n\colon\cX_n\times_{\Spec R_n}\cX_n\to\Spec R_n$ for the structure morphism,
  and~$R_0$ is considered as an object in~$\derived^\bounded(\Spec R_n)$.
  Hence,
  we can use a minimal free resolution
  \begin{equation}
    \begin{tikzcd}
      \cdots
      \arrow{r} & R_n^{\oplus m_2}
      \arrow{r} & R_n^{\oplus m_1}
      \arrow{r} & R_n^{\oplus m_0}
      \arrow{r} & R_0\arrow{r} & 0
    \end{tikzcd}
  \end{equation}
  (here~$m_0=1$ and~$m_i<+\infty$ because~$R$ is noetherian)
  to get a locally free resolution
  \begin{equation}
    \begin{tikzcd}
      \cdots
      \arrow{r} & \cO_{\cX_n\times_{\Spec R_n}\cX_n}^{\oplus m_2}
      \arrow{r} & \cO_{\cX_n\times_{\Spec R_n}\cX_n}^{\oplus m_1}
      \arrow{r} & \cO_{\cX_n\times_{\Spec R_n}\cX_n}
      \arrow{r} & R_0\arrow{r} & 0.
    \end{tikzcd}
  \end{equation}
  This implies that~$ \eta_*\mathrm{L}_k\eta^*(\eta_*\cO_{\Delta_{\cX_0}})\cong\eta_*\cO_{\Delta_{\cX_0}}^{\oplus m_k}$.
  Thus,
  $\mathrm{L}_k\eta^*(\eta_*\cO_{\Delta_{\cX_0}})\cong\cO_{\Delta_{\cX_0}}^{\oplus m_k}$,
  and we see that the corresponding Fourier--Mukai functor
  takes any~$F\in\Perf\cX_0$ to~$F^{\oplus m_k}$,
  hence preserves any triangulated subcategory of~$\Perf\cX_0$,
  as required.
\end{proof}

We can now conclude how
a semiorthogonal decomposition
\begin{equation}
  \label{equation:4.4}
  \derived^\bounded(\cX_0)
  =
  \Perf\cX_0
  =
  \langle\cA_0,\cB_0\rangle
\end{equation}
(recall that~$\cX_0$ is smooth over~$\bfk$)
induces one for~$\derived^\bounded(\cX_n)$.
Indeed,
consider the triangulated hulls~$\langle i_*\cA_0\rangle$ and~$\langle i_*\cB_0\rangle$
in~$\derived^\bounded(\cX_n)$.
Then we have the following.

\begin{proposition}
  \label{proposition:4.5}
  There is a semiorthogonal decomposition
  \begin{equation}
    \derived^\bounded(\cX_n)
    =
    \left\langle
    \langle i_*\cA_0\rangle,
    \langle i_*\cB_0\rangle
    \right\rangle.
  \end{equation}
\end{proposition}

\begin{proof}
  First we check semiorthogonality.
  It is enough to check that~$i_*\cA_0$ and~$i_*\cB_0$
  are semiorthogonal.
  So we take~$a_0\in\cA_0$,
  $b_0\in\cB_0$,
  and note that
  \begin{equation}
    \begin{aligned}
      \Hom(i_*b_0,i_*a_0)
      &\cong\Hom(i^* i_*b_0,a_0) \\
      &\cong\Hom(\hocolim\Phi_k(b_0),a_0) \\
      &\cong\holim\Hom(\Phi_k(b_0),a_0) \\
      &=0,
    \end{aligned}
  \end{equation}
  where the first isomorphism is the adjunction \eqref{adjunction},
  the second is \eqref{equation:4.3},
  the third is the standard property of the homotopy colimit,
  and the last follows from \eqref{equation:4.4}
  because~$\Phi_k$ preserves~$\cB_0$ by \cref{lemma:4.2}.

  On the other hand,
  the fact that~$i_*\cA_0$ and~$i_*\cB_0$ generate~$\derived^\bounded(\cX_n)$
  follows immediately from \cref{lemma:4.1}
  and the semiorthogonal decomposition \eqref{equation:4.4}.
\end{proof}

Next,
we explain how to induce a semiorthogonal decomposition of~$\Perf\cX_n$
from \eqref{equation:4.4}.
Given a subcategory~$\cC_0\subset\Perf\cX_0$
we denote by~$\widehat{\cC_0}\subset\derived_{\mathrm{qcoh}}(\mathcal O_{\cX_0})$
the localizing
(i.e., triangulated and closed under arbitrary coproducts)
subcategory generated by~$\cC_0$.
By \cite[Proposition~4.2]{MR2801403}
we have the induced semiorthogonal decomposition
\begin{equation}
  \label{equation:4.7}
  \derived_{\mathrm{qcoh}}(\mathcal O_{\cX_0})
  =
  \langle
  \widehat{\cA_0},
  \widehat{\cB_0}
  \rangle.
\end{equation}
We also have the following observation.
\begin{lemma}
  \label{lemma:4.8}
  We have
  \begin{equation}
    i^*\langle i_*\cA_0\rangle\subset\widehat{\cA_0}
  \end{equation}
  and
  \begin{equation}
    i^*\langle i_*\cB_0\rangle\subset\widehat{\cB_0}.
  \end{equation}
\end{lemma}

\begin{proof}
  Indeed, if~$a_0\in\cA_0$
  then~$i^* i_*a_0\cong\hocolim\Phi_k(a_0)$ by \eqref{equation:4.3}
  and~$\Phi_k(a_0)\in\cA_0$ by \cref{lemma:4.2},
  hence~$i^* i_*a_0\in\widehat{\cA_0}$.
  The same argument works for~$\cB_0$.
\end{proof}

Now we define the subcategories~$\cA_n$ and~$\cB_n$ of~$\Perf\cX_n$ by
\begin{equation}
  \label{equation:4.9}
  \begin{aligned}
    \cA_n&\defeq\{F\in\Perf\cX_n\mid i^*F\in\cA_0\} \\
    \cB_n&\defeq\{F\in\Perf\cX_n\mid i^*F\in\cB_0\}
  \end{aligned}.
\end{equation}

\begin{lemma}
  \label{lemma:4.10}
  We have~$\cA_n\subset\langle i_*\cA_0\rangle$,
  and~$\cB_n\subset\langle i_*\cB_0\rangle$.
\end{lemma}

\begin{proof}
  If~$F\in\cA_n$
  then~$i^*F\in\cA_0$ by the construction in \eqref{equation:4.9},
  and thus~$F\in\langle i_*\cA_0\rangle$
  by \cref{lemma:4.1}.
  The same argument works for~$\cB_n$.
\end{proof}

We now come to the main result of this section.
\begin{theorem}
  \label{proposition:4.11}
  Consider the semiorthogonal decomposition \eqref{equation:4.4}.
  There is an induced semiorthogonal decomposition
  \begin{equation}
    \Perf\cX_n
    =
    \langle\cA_n,\cB_n\rangle.
  \end{equation}
\end{theorem}

\begin{proof}
  First we check semiorthogonality.
  By \cref{lemma:4.10}
  it is enough to check that~$i_*\cA_0$
  and~$\cB_n$ are semiorthogonal.
  Take~$a_0\in\cA_0$ and~$b\in\cB_n$,
  and note that~$\Hom(b,i_*a_0)\cong\Hom(i^*b,a_0)$
  by the adjunction isomorphism \eqref{adjunction}.
  Since~$i^*b\in\cB_0$ by \eqref{equation:4.9},
  we have that~$\Hom(b,i_*a_0)=0$
  because \eqref{equation:4.4} is a semiorthogonal decomposition of~$\Perf\cX_0$.

  Now let~$E\in\Perf\cX_n\subset\derived^\bounded(\cX_n)$
  (we have this inclusion of categories by \cite[\href{https://stacks.math.columbia.edu/tag/0FXU}{Tag 0FXU}]{stacks-project},
  as~$\cX_n$ is smooth and proper over~$\Spec R_n$, where~$R_n$ is noetherian).
  By \cref{proposition:4.5}
  there is a distinguished triangle
  \begin{equation}
    \begin{tikzcd}
      b\arrow{r} & E\arrow{r} & a\arrow{r} & b[1],
    \end{tikzcd}
  \end{equation}
  where~$i^*a\in i^*\langle i_*\cA_0\rangle\subset\widehat{\cA_0}$
  and~$i^*b\in i^*\langle i_*\cB_0\rangle\subset\widehat{\cB_0}$
  by \cref{lemma:4.8}.

  On the other hand,
  we have~$i^*E\in\Perf\cX_0$.
  Using the semiorthogonal decomposition~\eqref{equation:4.4},
  the obvious embeddings~$\cA_0\subset\widehat{\cA_0}$ and~$\cB_0\subset\widehat{\cB_0}$,
  and the uniqueness of the decomposition triangle for \eqref{equation:4.7},
  we conclude that~$i^*a\in\cA_0$
  and~$i^*b\in\cB_0$.
  In particular,
  we have~$i^*a,i^*b\in\Perf\cX_0$,
  hence~$a,b\in\Perf\cX_n$
  by \cite[Lemma~2.11]{MR4395103},
  and therefore~$a\in\cA_n$ and~$b\in\cB_n$
  by the definition in \eqref{equation:4.9}.
  This proves that~$\cA_n$ and~$\cB_n$
  generate~$\Perf\cX_n$.
\end{proof}

\subsection{Formal deformations and \'etale-local behaviour of semiorthogonal decompositions}
The next result corresponds to \Cref{item:DEF} of \Cref{criterion:hall-rydh},
and thus provides the final point we would like to verify, at least for semiorthogonal decompositions of length $\ell=2$.

\begin{theorem}
  \label{theorem:deforming-sods}
  Let $ f \colon \cX \to U $ be a morphism of schemes as in \Cref{situation:main}.
  Let $(R,\mathfrak m,\bfk)$ be a local noetherian ring that is $\mathfrak m$-adically complete.
  For every morphism $\Spec R \to U$, the natural map
  \begin{equation}
    \begin{tikzcd}
      \vartheta\colon \sod_f(\Spec R \to U) \arrow{r} & \sod_f(\Spec \bfk \to U)
    \end{tikzcd}
  \end{equation}
  is bijective.
\end{theorem}

\begin{proof}
  Denote~$R_n\defeq R/\mathfrak{m}^{n+1}$ as in \cref{subsection:extending-sods}.
  Because~$R$ is complete noetherian
  and~$\sod_f$ is limit-preserving (cf.~\Cref{theorem:sod-f-lfp}),
  we have that
  \begin{equation}
    \sod_f(\Spec R \to U)\cong\displaystyle\varprojlim\sod_f(\Spec R_n \to U),
  \end{equation}
  and it suffices to show that the natural maps
  \begin{equation}
    \begin{tikzcd}
      \label{equation:infinitesimal-restriction-map}
      \sod_f(\Spec R_n \to U)\arrow{r} & \sod_f(\mathbf{\Spec k} \to U)
    \end{tikzcd}
  \end{equation}
  are all isomorphisms.
  The surjectivity follows from \cref{proposition:4.11}.

  To prove that the restriction map \eqref{equation:infinitesimal-restriction-map}
  is also injective,
  let
  \begin{equation}
    \label{equation:another-deformation}
    \Perf\cX_n=\langle\cA_n',\cB_n'\rangle
  \end{equation}
  be a semiorthogonal decomposition
  whose restriction along~$i\colon\cX_0\hookrightarrow\cX_n$
  coincides with \eqref{equation:4.4}.
  We wish to show that an object~$F\in\Perf\cX_n$
  belongs to~$\cA_n'$
  if and only if~$i^*F\in\cA_0$,
  and thus that~$\cA_n=\cA_n'$
  by the construction of the lift in \eqref{equation:4.9}.

  If~$F\in\cA_n'$,
  then~$i^*F\in\cA_0$
  by definition of the base change.
  Conversely,
  assume that~$i^*F\in\cA_0$.
  Consider the decomposition triangle
  \begin{equation}
    \begin{tikzcd}
      b_n'\arrow{r} & F\arrow{r} & a_n'\arrow{r} & b_n'[1]
    \end{tikzcd}
  \end{equation}
  for the semiorthogonal decomposition~\eqref{equation:another-deformation}.
  Restricting along~$i^*$ we obtain the distinguished triangle
  \begin{equation}
    \begin{tikzcd}
      i^*b_n'\arrow{r} & i^*F\arrow{r} & i^*a_n'\arrow{r} & i^*b_n'[1],
    \end{tikzcd}
  \end{equation}
  and because~$i^*F,i^*a_n'\in\cA_0$
  we obtain that~$i^*b_n'=0$.
  But then by \cref{lemma:derived-nakayama-global}
  we have that~$b_n'=0$,
  and thus~$F\in\cA_n'$ as required.
\end{proof}

\subsection{Proof of \Cref{theorem:geometric-deformation}}
We will now prove \Cref{corollary:geometric-deformation}
(which is a generalisation of \Cref{theorem:geometric-deformation})
which allows us to deform a semiorthogonal
decomposition on $X=f^{-1}(0)$ to an \'etale neighbourhood of $0\in U$.
For this we recall a version of Artin approximation,
originally stated (in a more restricted version) in \cite[Corollary~2.2]{MR0268188}.

\begin{theorem}[Artin approximation]
  \label{theorem:artin-approximation}
  Let $f\colon\cX\to U$ be a morphism of schemes as in \Cref{situation:main}.
  Let $\mathsf F\colon \Sch_U^{\op}\to \Sets$ be a functor which is limit-preserving.
  Let $0\in U$ be a point such that~$\mathcal{O}_{U,0}$ is a G-ring,\footnote{
    See \cite[\href{https://stacks.math.columbia.edu/tag/07GG}{Tag 07GG}]{stacks-project} for the definition.
  This holds for all points if~$U$ is excellent.}
  and take $\overline{\xi} \in \mathsf F(\Spec \widehat{\Ocal}_{U,0})$.
  Then there exists an \'etale neighbourhood $U'\to U$ of $0$ and an element $\xi'\in \mathsf F(U')$ such that
  \begin{equation}
    \label{eq:Artin approximation}
    \xi' \equiv \overline{\xi}\,\, (\bmod\,\, \mathfrak m_{U,0}).
  \end{equation}
\end{theorem}

\begin{proof}
  This is the case~$N=0$ of \cite[Theorem~B.5.18]{alper-stacks-and-moduli}.
\end{proof}

Let us explain the statement of \Cref{theorem:artin-approximation}.
For a ring $A$, set $\mathsf F(A)=\mathsf F(\Spec A)$, for brevity.
Then (using that $\mathsf F$ is contravariant) an \'etale neighbourhood $\Spec \bfk (0) \to U' \to U$ induces a commutative diagram of sets
\begin{equation}
  \begin{tikzcd}
    \mathsf F(\widehat{\Ocal}_{U,0}/\mathfrak m_{U,0}) & \mathsf F(\widehat{\Ocal}_{U,0})\arrow{l}\isoarrow{d} & \mathsf F(\Ocal_{U,0})\arrow{l}\arrow{d} & \mathsf F(U)\arrow{l}\arrow{d} \\
    & \mathsf F(\widehat{\Ocal}_{U',u'}) & \mathsf F(\Ocal_{U',u'})\arrow{l} & \mathsf F(U')\arrow{l}
  \end{tikzcd}
\end{equation}
where $u'\in U'$ is the image of $\Spec \bfk(0) \to U'$.
The condition~\eqref{eq:Artin approximation}
means that $\overline{\xi} \in \mathsf F(\widehat{\Ocal}_{U,0})$ and $\xi'\in\mathsf F(U')$
go to the same element in $\mathsf F(\widehat{\Ocal}_{U,0}/\mathfrak m_{U,0})$ under the given maps.

We can now prove that a semiorthogonal decomposition always uniquely deforms to the nearby fibres in the following sense.

\begin{corollary}
  \label{corollary:geometric-deformation}
  Let $ f \colon \cX \to U $ be a morphism of schemes as in \Cref{situation:main}.
  Let $0\in U$ be a point such that~$\mathcal{O}_{U,0}$ is a G-ring,
  and let $X = f^{-1}(0) \subset \cX$ denote the fibre of $f$ over $0$.
  Assume that the derived category $\derived^\bounded(X)\defeq \derived^\bounded(\coh X)$ of $X$ admits a $\bfk(0)$-linear semiorthogonal decomposition
  \begin{equation}
    \label{equation:sod-central-fibre}
    \derived^\bounded(X) = \braket{\cA,\cB}.
  \end{equation}
  Then, shrinking $U$ to an \'etale neighbourhood of $0$ if necessary, one finds a unique $U$\dash linear semiorthogonal decomposition
  \begin{equation}
    \Perf \Xcal = \braket{\cA_U,\cB_U}
  \end{equation}
  whose base change to $\Spec \bfk(0) \to U$ is the initial semiorthogonal decomposition \eqref{equation:sod-central-fibre}.
\end{corollary}

\begin{proof}
  Let $(R,\mathfrak m,\bfk)$ be the completion of the local ring $\mathcal O_{U,0}$, and consider the closed immersion
  \begin{equation}
    \begin{tikzcd}
      \Spec R/\mathfrak m \arrow[hook]{r} &  \Spec R.
    \end{tikzcd}
  \end{equation}
  By \cref{theorem:deforming-sods} we know that there is a bijection
  \begin{equation}
    \begin{tikzcd}
      \sod_f(\Spec R \to U) \arrow{r}{\sim} & \sod_f(\Spec R/\mathfrak m \to U).
    \end{tikzcd}
  \end{equation}
  Let
  \begin{equation}
    \xi_{\mathfrak m} \in \sod_f(\Spec R/\mathfrak m\to U)
  \end{equation}
  be the element corresponding to \eqref{equation:sod-central-fibre}.
  Now, $\sod_f$ is limit-preserving by \Cref{theorem:sod-f-lfp}.
  Therefore, by \Cref{theorem:artin-approximation}, there exists an \'etale neighbourhood $U'\to U$
  of the point $0\in U(\bfk)$ and a $U'$-linear semiorthogonal decomposition
  \begin{equation}
    \xi' \in \sod_f(U'\to U)
  \end{equation}
  deforming the semiorthogonal decomposition \eqref{equation:sod-central-fibre} on the central fibre.
\end{proof}

As a special case of \cref{corollary:geometric-deformation}
we obtain \cref{theorem:geometric-deformation}.

\begin{remark}
  \label{remark:hu-comparison}
  In \cite[Theorem~3.5]{1805.04050v4} Hu obtained a version of \Cref{corollary:geometric-deformation}
  valid in the special case of semiorthogonal decompositions consisting of exceptional objects.
\end{remark}

\section{Moduli spaces of semiorthogonal decompositions}
\label{section:moduli}
In this section we combine the results on the deformation theory of semiorthogonal decompositions
and the functor~$\sod_f$ to complete the proof of \Cref{theorem:main-intro},
and give some examples of its geometric properties.

\subsection{Proof of \texorpdfstring{\Cref{theorem:main-intro}}{main theorem}: checking Artin's axioms}
The following theorem is the main result of this paper.

\begin{theorem}
  \label{theorem:sod-f}
  Let~$U$ be a quasicompact and semiseparated scheme,
  and let~$f\colon\cX\to U$ be a smooth and proper morphism of schemes. Fix an integer $\ell \geq 2$.
  If we assume moreover that~$U$ is excellent,
  then~$\sod_f^{\ell}$ is a nonempty algebraic space
  which is \'etale over~$U$.
\end{theorem}

The proof of \Cref{theorem:sod-f} consists in checking the conditions in \Cref{criterion:hall-rydh},
first for~$\ell=2$ (writing~$\sod_f=\sod_f^2$, as ever),
and then by induction for all~$\ell$.

\begin{proof}[Proof of \Cref{theorem:sod-f}]
  By construction (see also \Cref{theorem:sod-f-fpqc-sheaf} for the main step),
  $\sod_f$ satisfies \cref{item:etale-sheaf} of \Cref{criterion:hall-rydh},
  and \Cref{theorem:sod-f-lfp} proves \Cref{item:limit-preserving} in \cref{criterion:hall-rydh}.

  To check \Cref{item:DEF} in \cref{criterion:hall-rydh},
  we choose a complete local noetherian $U$-ring $R$
  such that the induced morphism~$\Spec R/\mathfrak{m}\to U$ is of finite type,
  and let
  \begin{equation}
    \begin{tikzcd}
      \vartheta\colon \mathsf{SOD}_f(\Spec R \to U) \arrow{r} & \mathsf{SOD}_f(\Spec R/\mathfrak m \to U)
    \end{tikzcd}
  \end{equation}
  be the canonical map.
  This map is bijective by \Cref{theorem:deforming-sods}
  (note that injectivity can also be proved by means of \Cref{lm:coincidence of SODs can be checked at closed points}).

  Finally, the algebraic space~$\sod_f$ is nonempty
  because we always have the trivial semiorthogonal decompositions $\langle 0,\Perf \cX_{V}\rangle = \Perf \cX_{V} = \langle \Perf \cX_{V},0\rangle$,
  which are automatically $V$-linear. This settles the case $\ell = 2$.

  \medskip

  For the induction step we will consider the morphism of sheaves
  \begin{equation}
    \begin{tikzcd}
      \sod_f^\ell\arrow{r}{\vartheta} & \sod_f^{\ell-1}\times\sod_f
    \end{tikzcd}
  \end{equation}
  defined (on~$\Aff_U$, and subsequently extended using \cref{lemma:equivalence-of-toposes}, as earlier)
  by sending (for an affine~$U$-scheme~$\phi\colon V\to U$)
  the~$V$-linear semiorthogonal decomposition~$(\cA^1,\cA^2,\ldots,\cA^\ell) \in \sod_{f}^{\ell}(\phi)$
  of~$\Perf\cX_V$
  to the pair of~$V$-linear semiorthogonal decompositions
  \begin{equation}
    \left(
      (\langle \cA^1, \cA^2\rangle, \cA^3, \ldots,\cA^\ell),
      (\cA^1, \langle \cA^2, \cA^{ 3 }, \ldots,\cA^\ell \rangle)
    \right).
  \end{equation}
  Observe that we have an equality
  \begin{equation}
    \cA^{ 2 }
    =
    \langle \cA^1, \cA^2 \rangle
    \cap
    \langle \cA^2, \cA^{ 3 }, \ldots,\cA^\ell \rangle.
  \end{equation}
  This shows that~$\vartheta(\phi)$ is injective,
  and that the image consists of those pairs of~$V$-linear semiorthogonal decompositions
  \begin{equation}\label{eq:a pair of SOD}
    \left(
      (\cB^1, \dots, \cB^{ \ell - 1 }),
      (\cC^1, \cC^{ 2 })
    \right)
  \end{equation}
  satisfying the condition
  \begin{equation}
    \label{eq:C1 is contained in B1}
    \cC^1 \subset \cB^1,
  \end{equation}
  or, equivalently, the condition
  \begin{equation}
    \label{eq:C1 is contained in B1-equiv}
    \cC^1 \subset ( \cB^i )^{ \perp }
    \quad
    \textrm{for all } i = 2, \dots, \ell - 1.
  \end{equation}
  By the induction hypothesis, we know that the codomain of the morphism~$\vartheta$ is an algebraic space \'etale over~$U$. On the other hand, the base change (in the category of sheaves) of
  \(
    \vartheta
  \)
  by the morphism
  \(
    V \to \sod_f^{\ell-1}\times\sod_f
  \)
  of algebraic spaces corresponding to a pair of~$V$-linear semiorthogonal decompositions as in \eqref{eq:a pair of SOD} is represented by the complement in
  \(
    V
  \)
  of the support of the object
  \(
    \RRlHom _{ f _{ V }}
    \left(
      \bigoplus
      _{ 2 \le i \le \ell - 1 } b ^{ i },
      c ^{ 1 }
    \right)
    \in
    \Perf V
  \),
  where
  \(
    b ^{ i }
  \)
  and
  \(
    c ^{ 1 }
  \)
  are classical generators of
  \(
    \cB ^{ i }
  \)
  and
  \(
    \cC ^{ 1 }
  \),
  respectively, by~\Cref{lemma:semiorthogonality-via-local-Ext}. This implies that
  \(
    \vartheta
  \)
  is an open immersion of algebraic spaces, concluding the proof.
\end{proof}

The proof of \Cref{theorem:main-intro} is complete (the first part had already been settled in \Cref{proposition:description-sod-f}).

\subsection{First examples, questions and properties}
We now elaborate on the geometric properties of~$\sod_f^{\ell}\to U$
by exhibiting some examples of its interesting and potentially complicated behaviour.

\subsubsection{\texorpdfstring{$\sod_{f}^{\ell}$}{} \textbf{over fields}}
The first thing to observe is that, although~$\sod_f^{\ell}$ is shown to be an algebraic space over $U$ in an abstract way,
it turns out to be something tame over the generic points of $U$, or put differently, when $U$ is the spectrum of a field.

\begin{proposition}
  \label{proposition:sod-f-over-field}
  If $f\colon X \to U = \Spec \bfk$ is a smooth and proper morphism, where $\bfk$ is a field, then $ \sod_f^{\ell}$ is a separated $\bfk$-scheme of the form
  \begin{equation}
    \coprod_{ i \in I } \Spec L_i
  \end{equation}
  for a (possibly infinite) collection of finite separable field extensions $ L_i/\bfk $.
\end{proposition}

\begin{proof}
  The assertion that $\sod_f^{\ell}$ is a scheme under the assumption $U = \Spec \bfk$ follows from a general result about algebraic spaces \'etale over a field,
  cf.~\cite[\href{https://stacks.math.columbia.edu/tag/03KX}{Tag 03KX}]{stacks-project}.
  On the other hand, the restriction of $\sod_f^{\ell} \to \Spec \bfk$ to each connected component is affine, and this confirms that $\sod_f^{\ell} \to \Spec \bfk$ is separated.
  The realisation $\sod_{f}^{\ell} = \coprod_{ i \in I } \Spec L_i$ follows from
  \cite[\href{https://stacks.math.columbia.edu/tag/02GL}{Tag 02GL}]{stacks-project}.
\end{proof}

We also have the following result on the \emph{field of definition}
of a semiorthogonal decomposition.

\begin{proposition}
  \label{proposition:field-of-definition}
  Let $\bfk \subset K$ be an extension of fields,
  and $X$ be a smooth projective variety over $\bfk$.
  Let
  \begin{equation}
    \label{equation:sod-of-base-change}
    \derived^\bounded ( X_K )
    =
    \langle \cA^1_K, \dots, \cA^\ell_K \rangle
  \end{equation}
  be a $K$-linear semiorthogonal decomposition.
  Then there is a finite intermediate field extension~$\bfk \subset L \subset K$
  and a unique~$L$-linear semiorthogonal decomposition $\derived^\bounded (X_L)=\langle \cA^1_L, \dots, \cA^\ell_L \rangle$
  whose base change by $L \subset K$ coincides with \eqref{equation:sod-of-base-change}.
\end{proposition}

\begin{proof}
  By definition, the $K$-linear semiorthogonal decomposition as in
  \eqref{equation:sod-of-base-change} bijectively corresponds to
  a morphism of $\bfk$-algebraic spaces $s \colon \Spec K \to \sodl{\ell}_{X/\bfk}$.
  By \Cref{proposition:sod-f-over-field},
  $\sodl{\ell}_{X/\bfk}$ is a scheme,
  and there exists an intermediate field $L$ as in the assertion
  such that $s$ is the composition of
  the obvious morphism $\Spec L\to\Spec K$
  and an embedding as a connected component $\iota \colon \Spec L \hookrightarrow \sodl{\ell}_{X/\bfk}$.
  The $L$-linear semiorthogonal decomposition of $\derived^\bounded ( X_L ) $
  corresponding to $\iota$ has the desired properties.

  The uniqueness follows from the obvious equality $ \cA^i_L=\{F \in \derived^\bounded ( X_L ) \mid F_K \in \cA^i_K\}$,
  where $F_K$ denotes the pullback of $F$ along the morphism $X_K\to X_L$.
\end{proof}

\begin{example}
  \label{example:trivial-sods}
  The next thing to point out is that there are always $\ell$ sections $U \to \sod_f^{\ell}$ of the structure morphism $ \sod_f^{\ell} \to U$,
  corresponding to the obvious trivial semiorthogonal decompositions
  \begin{equation}
    \Perf  \cX  = \langle 0,\ldots,0,\Perf  \cX, 0, \ldots,0 \rangle,
  \end{equation}
  where the whole category $\Perf \cX$ sits in $i$th position for $i=1,\ldots,\ell$.
  The images of the sections are disjoint and give $\ell$ connected components $U \subset \sod_f$.
  In \Cref{subsection:nontrivial} we discuss how to remove these trivial components from the algebraic space~$\sod_f^{\ell}$ by defining a subfunctor which takes into account only nontrivial semiorthogonal decompositions.
\end{example}

\begin{example}
  Let~$X$ be a smooth projective variety over an algebraically closed field~$\bfk$,
  whose derived category admits no nontrivial semiorthogonal decompositions.
  Typical examples are Calabi--Yau varieties (by Serre duality),
  or curves of genus~$g\geq 2$ \cite{MR2838062}.
  Writing the structure morphism as~$f\colon X\to\Spec \bfk$,
  this non-existence can be rephrased as the identification
  \begin{equation}
    \sod_f^{\ell}\cong \coprod_{i=1}^{\ell}\Spec \bfk,
  \end{equation}
  where the right-hand side are the trivial components we just discussed.
  More generally, for a family~$f\colon \mathcal{X}\to U$ of such varieties, we have
  \begin{equation}
    \sod_f^{\ell}\cong \coprod_{i=1}^{\ell} U.
  \end{equation}
\end{example}

Needless to say, things become more interesting when $X$ admits nontrivial semiorthogonal decompositions. Below is the first nontrivial example.
\begin{example}
  Let~$f\colon\mathbb{P}_\bfk^1\to\Spec \bfk$.
  It is standard that all nontrivial semiorthogonal decompositions of~$\derived^\bounded(\mathbb{P}_\bfk^1)$
  are given as exceptional collections
  of the form~$\langle\mathcal{O}_{\mathbb{P}_\bfk^1}(i),\mathcal{O}_{\mathbb{P}_\bfk^1}(i+1)\rangle$ for~$i\in\mathbb{Z}$
  (up to shift, which is invisible when considering the subcategory generated by the exceptional object).

  This implies that
  \begin{equation}
    \sod_f^{2}\cong
    \overbrace{\left( \Spec \bfk \sqcup \Spec \bfk\right)}^{\text{trivial components}}
    \sqcup
    \coprod_{i \in \mathbb Z} \Spec\bfk.
  \end{equation}
  Thus we see that already in the simplest of interesting cases there is no chance of the space~$\sod_f$ being quasicompact.

  In \Cref{subsection:group-actions} we suggest how~$\sod_f^{\ell}$ can be equipped with an action of the braid group,
  encoding mutation of semiorthogonal decompositions.
  By transitivity of the braid group action on the set of exceptional collections of~$\derived^\bounded(\mathbb{P}_\bfk^1)$
  this will imply that the quotient reduces to a single point (after we remove the trivial components).
\end{example}

As the following example\footnote{This example is developed in great detail in \cite{2107.03051}.} shows,
the structure morphism $\sod_f \to U$ is in general not separated (nor of finite type), let alone quasifinite, even when restricted to connected components of~$\sod_f$.

\begin{example}\label{eg:OU}
  Let~$\bfk$ be an algebraically closed field.
  Set $U = \bA^1$. There exists a smooth projective morphism $ f \colon \cX \to  U $ of schemes such that
  \begin{enumerate}
    \item $ f^{ - 1 } ( 0 ) \cong \Sigma_2 $ and
    \item $ f^{ - 1 } ( \bG_{ \mathrm{m} } ) \cong \Sigma_0 \times \bG_{ \mathrm{m} } $
      over $ \bG_{ \mathrm{m} } = \bA^1 \setminus \{ 0 \} \subseteq \bA^1$,
  \end{enumerate}
  where $ \Sigma_d\defeq\bP_{ \bP^1 }\left( \cO_{ \bP^1 } \oplus \cO_{ \bP^1 } ( - d ) \right) $
  is the Hirzebruch surface of degree $ d $.
  By~\cite[Proposition~2.10]{MR1286839} an exceptional object on the surface $ \Sigma_0 $
  is an exceptional vector bundle up to shifts,
  and it is uniquely determined by its class in $ \mathrm{K}_0 ( \Sigma_0 ) $ up to even shifts.

  On the other hand, by~\cite[Claim~3.1]{MR3431636},
  there are infinitely many exceptional sheaves on $ \Sigma_2 $,
  all of which have the same class in $ \mathrm{K}_0 ( \Sigma_2 )$.
  By the deformation theory of exceptional objects
  they extend uniquely to $ f $-exceptional objects on $ \cX $; in general this is the case only after passing to an \'etale neighbourhood of $ 0 \in U $,
  but in this example it is not necessary since
  \(
    \bA ^{ 1 }
  \)
  is simply connected and the morphism $ f $ is trivial over $ \bG_{ \mathrm{m} } $.

  The restrictions of these $f$-exceptional objects
  on a non-central fiber
  (which are all isomorphic to~$\bP^1\times\bP^1$)
  are all isomorphic to a single exceptional vector bundle.
  In this way one can find $ f $-linear semiorthogonal decompositions yielding
  an irreducible component of $ \sod_f $ which is neither separated nor quasicompact over $ U $,
  because it is the affine line with infinitely many origins.

  In particular, this means that $\sod_f $ is not necessarily universally closed over $ U $,
  by \cite[\href{https://stacks.math.columbia.edu/tag/04XU}{Tag 04XU}]{stacks-project}.
\end{example}

As we already pointed out in the introduction, this example makes it difficult to answer the question whether $ \sod_f^{\ell}$ is always a scheme or not.

However, we observe the following,
showing that not all good properties are lost.
\begin{proposition}
  \label{proposition:quasi-separated}
  Let $ f \colon \cX \to U $ be a morphism of schemes as in \Cref{situation:main},
  and assume moreover that~$U$ is excellent.
  Then~$\sod_f^{\ell}$ is a locally noetherian and quasiseparated algebraic space,
  and has a dense open schematic locus.
\end{proposition}

\begin{proof}
  By \cref{theorem:sod-f} the structure morphism~$\sod_f^{\ell}\to U$ is \'etale and thus locally of finite presentation, hence a fortiori locally of finite type.
  On the other hand, because~$U$ is assumed excellent,
  it is locally noetherian.
  Therefore applying \cite[\href{https://stacks.math.columbia.edu/tag/04ZK}{Tag 04ZK}]{stacks-project}
  to $\sod_f^{\ell}\to U$
  shows that~$\sod_f^{\ell}$ is also locally noetherian.

  On the other hand, $\sod_{f}^{\ell}$ is a \emph{decent} algebraic space (in the sense of \cite[\href{https://stacks.math.columbia.edu/tag/03I7}{Tag 03I7}]{stacks-project}) because $U$ is (as any scheme) decent and decency is preserved by \'etale maps by \cite[\href{https://stacks.math.columbia.edu/tag/0ABU}{Tag 0ABU}]{stacks-project}. Therefore $\sod_{f}^{\ell}$ is quasiseparated by \cite[\href{https://stacks.math.columbia.edu/tag/0BB6}{Tag 0BB6}]{stacks-project}, and in particular, by \cite[\href{https://stacks.math.columbia.edu/tag/06NH}{Tag 06NH}]{stacks-project}, it has a dense open schematic locus.
\end{proof}

\subsubsection{\textbf{Existence part of valuative criterion for properness}}
We can also ask the following question, motivated by the fact that
a morphism is universally closed if and only if it is quasicompact and satisfies the existence part of the valuative criterion (\cite[\href{https://stacks.math.columbia.edu/tag/01KF}{Tag 01KF} and \href{https://stacks.math.columbia.edu/tag/04XU}{Tag 04XU}]{stacks-project}).
If answered affirmatively, then it would imply that for every connected component $ Z \subset \sod_f^{\ell}$,
the restriction of the structure morphism $ Z \to U $ is surjective.

This question has recently received a negative answer by work of Wu \cite{Weimufei},
but we kept it in as in the first version of this paper.

\begin{question}
  \label{question:valuative}
  Let $f \colon \cX \to U$ be a morphism of schemes as in \Cref{situation:main}, where $U$ is excellent and purely of characteristic~$0$. Let $\ell \geq 2$ be an integer.
  Does $ \sod_f^{\ell} \to U $ satisfy the existence part of the valuative criterion
  (cf.~\cite[\href{https://stacks.math.columbia.edu/tag/01KD}{Tag 01KD}]{stacks-project})?
  Namely, let $R$ be a valuation ring and $Q$ its field of fractions.
  For every commutative diagram
  \begin{equation}
    \begin{tikzcd}[row sep=large,column sep=large]
      \Spec Q \arrow{r} \arrow{d} & \sod_f^{\ell} \arrow{d}\\
      \Spec R \arrow{r} \arrow[dotted]{ru} & U,
    \end{tikzcd}
  \end{equation}
  is there at least one dotted arrow which makes the two induced triangles commute?
\end{question}
This question is related to the conjectural existence of a specialisation morphism
for Grothendieck rings of categories \`a la Bondal--Larsen--Lunts \cite{MR2051435},
as discussed in \cite{1810.03220v1}.

The next example, due to Fakhruddin,
shows that there are counterexamples to \Cref{question:valuative}
in characteristic $2$ and in mixed characteristic $(0,2)$.
So far we do not know of any counterexample in characteristic $ p $ or $ ( 0, p ) $ for $ p > 2$,
but it seems likely that one can obtain counterexamples in those cases too.
In fact, the deformation invariance of genera is known to fail in those characteristics as well
(see for example \cite[Examples 8.7 and 8.8]{MR799664}).
Unfortunately for the general fibres of these families no line bundle is an exceptional object,
so the argument we will outline does not work in these cases.

\begin{example}
  \label{remark:fakhruddin}
  Let $\bfk$ be an algebraically closed field of characteristic $2$.
  Let $f \colon \cX \to U = \Spec R$,
  where $ R = \bfk \llbracket t \rrbracket$,
  be a smooth projective family of Enriques surfaces in characteristic $2$,
  where the generic fibre is ordinary and the special fibre is supersingular (see \cite{MR3393362}).
  The structure sheaf of the generic fibre is an exceptional object,
  so we get a section $ \Spec Q \to \sod_f $ over the generic point.
  On the other hand, there is no semiorthogonal decomposition on the central fibre,
  since its canonical bundle is trivial.
  In particular, the section cannot be extended over $ U $.

  By the lifting result \cite[Theorem 4.10]{MR3393362}, for every (super)singular Enriques surface $X$
  over~$\bfk$,
  one can find a smooth projective morphism $ f \colon \cX \to U = \Spec S$
  where $S$ is a complete discrete valuation ring of mixed characteristic,
  whose residue field is $\bfk$,
  the special fibre is isomorphic to $X$, and the generic fibre is an Enriques surface.
  By the same arguments as above, this example shows that \Cref{question:valuative} also fails in mixed characteristics.
\end{example}

\subsubsection{\textbf{Topological semiorthogonal decompositions}}
One could also consider semiorthogonal decompositions of $\derived(\Qcoh X)$
which are of topological nature.
We see that the uniqueness of deformations of semiorthogonal decompositions of such categories is satisfied
if the parameter space is the spectrum of an artinian local ring, but completely fails over a general parameter space.
\begin{example}
  \label{example:topological-sod}
  Let $ X $ be a variety of dimension at least~1 over a field $ \bfk $,
  with a (strict) closed subset $ Z \subset X $,
  and let $ i \colon V = X \setminus Z \hookrightarrow X $
  be the open immersion of the complement of $ Z$.
  Then there is the semiorthogonal decomposition
  \begin{equation}\label{equation:wrong-way-recollement-as-sod}
    \derived(\Qcoh X)=
    \langle
    \derived(\Qcoh V),
    \derived_Z(\Qcoh X)
    \rangle,
  \end{equation}
  where $\derived_Z(\Qcoh X) $ is the subcategory of $\derived(\Qcoh X) $ consisting of those complexes whose cohomology is supported on $Z$
  (see for instance \cite[\S 6.2.1]{MR2434186}).
  One can recover the closed subset~$ Z $ from the subcategory~$\derived_Z(\Qcoh X) \into \derived(\Qcoh X)$
  as the subset of those closed points whose structure sheaves are contained in the subcategory.

  However,
  one can easily work over a base
  where there are infinitely many distinct deformations of
  the semiorthogonal decomposition~\eqref{equation:wrong-way-recollement-as-sod}.
  For example, let $ X = \bA^1 = \Spec \mathbb{C} [ t ] $, $ Z = \{ 0 \} $,
  and $ U = \Spec \mathbb{C} \llbracket t \rrbracket $ the neighbourhood of the origin of $ X$.
  Take a closed irreducible curve $ C \subset X \times X $ satisfying
  \begin{equation}
    C\cap\left(X \times \{ 0 \}\right)= \{ ( 0, 0 )\},
  \end{equation}
  and let $ \mathcal{Z} \hookrightarrow X \times U $ be its inverse image under the obvious map $ \cX \defeq X \times U \to X \times X$.
  Then the $ U $-linear semiorthogonal decomposition
  \begin{equation}
    \derived ( \Qcoh \cX ) = \langle \derived ( \Qcoh \cV ),\derived_{ \cZ } ( \Qcoh \cX ) \rangle,
  \end{equation}
  where $ \cV = \cX \setminus \cZ$,
  deforms the semiorthogonal decomposition~\eqref{equation:wrong-way-recollement-as-sod} of the central fibre.
  Now note that we have infinitely many choices for the curve $ C$,
  and hence for the subcategories.
  Note also that their base changes along $ \Spec \bC \llbracket t \rrbracket/ ( t^{ n + 1 } )\hookrightarrow U$
  are all the same for all $ n \ge 0 $.

  One possible way of interpreting this phenomenon
  is in terms of the non-uniqueness of the algebraization of the projection kernel.
  Note that this interpretation assumes some version of \cref{section:sod-is-dec} in this setting,
  and a generalization of \cite{former-appendix} for the decomposition triangle in this context.
  Provided that this works,
  the map similar to $ \vartheta $ in \Cref{theorem:deforming-sods},
  which in turn is concerned with \cref{item:DEF} in \Cref{criterion:hall-rydh}, is not injective.
  This is outside the scope of the current paper.
\end{example}

\section{Amplifications}
\label{section:amplifications}
In this section we will bootstrap from the construction of~$\sod_f^{\ell}$ to define closely related moduli spaces.
These amplifications, as anticipated in \Cref{subsection:amplifications-intro}, happen in two steps,
the latter being part of a conjecture:
\begin{enumerate}
  \item we explain how to work relatively to a fixed subcategory~$\mathcal{B} \subseteq \Perf \mathcal{X}$ in a~$U$-linear semiorthogonal decomposition $\Perf \mathcal{X} = \langle\mathcal{A},\mathcal{B}\rangle$. The output (cf.~\Cref{corollary:subcategory-space}) is an \'etale algebraic space $\sod_{\cB}^{\ell} \to U$ parametrising semiorthogonal decompositions of $\cB$, thereby making the theory valid for geometric subcategories, in the spirit of \cite{MR3545926};
  \item we conjecture that a naturally defined subsheaf $\ntsodl{\ell}_{ \cB } \subseteq \sodl{\ell}_{\cB}$, parematrising only nontrivial semiorthogonal decompositions (i.e.,~the zero subcategory is not allowed as a component of the decomposition, cf.~\Cref{def:ntSOD affine}), defines an open and closed subfunctor.
\end{enumerate}
Moreover, in \Cref{subsection:group-actions}, we will define natural group actions (by mutations and autoequivalences) defined on our moduli spaces.

\subsection{Relative to a fixed subcategory}
\label{subsection:relative-to-subcategory}
We will now bootstrap to consider not just semiorthogonal decompositions of perfect complexes on schemes,
but to all categories of geometric nature.
This is to be interpreted in the sense that they are components of a semiorthogonal decomposition of perfect complexes on a scheme,
similar to the notion of a geometric dg category in \cite{MR3545926}.
Note that semiorthogonal decompositions are by definition independent of dg enhancements,
as they are defined for the homotopy category of perfect complexes over a dg category,
thus we need not worry about enhancements of geometric dg categories.

\begin{definition}
  Let $ f \colon \cX \to U $ be a morphism of schemes as in \Cref{situation:main}.
  Fix $ \ell \ge 2 $ and a $ U $-linear semiorthogonal decomposition
  \begin{equation}
    \label{eq:fixed_sod}
    \Perf \cX
    =
    \langle
    \cA, \cB
    \rangle,
  \end{equation}
  and consider the presheaf $ \overline{\sod}^{\ell}_{ \cB } \in \Presheaf ( \Aff_U ) $
  which sends an affine $U$-scheme $ \phi \colon V \to U $ to the set
  \begin{equation}
    \overline{\sod}^{\ell}_{ \cB }(\phi) =
    \Set{
      (\cA^1, \dots, \cA^\ell)
      |
      \begin{array}{c}
        \cB_\phi=\langle \cA^1, \dots, \cA^\ell\rangle \textrm{ is a }V\textrm{-linear}  \\
        \textrm{semiorthogonal decomposition}
      \end{array}
    },
  \end{equation}
  where~$\cB_\phi$ is the base change of~$\mathcal{B}$ (cf.~\Cref{proposition:base-change-sod}).
\end{definition}

Note that the semiorthogonal decomposition \eqref{eq:fixed_sod} we fixed corresponds to a section $ s \colon U \to \sod_f
$.
This choice allows us to prove the following proposition.

\begin{proposition}
  \label{proposition:eq:sod of an admissible subcategory as fibre product}
  Let $ f \colon \cX \to U $ be a morphism of schemes as in \Cref{situation:main}. Fix $ \ell \ge 2 $ and a $ U $-linear semiorthogonal decomposition as in \eqref{eq:fixed_sod}.
  Let
  \begin{equation}
    \label{morphism-presh}
    \begin{tikzcd}
      \overline{\sod}_f^{\ell+1}\arrow{r} & \overline{\sod}_f
    \end{tikzcd}
  \end{equation}
  be the morphism of presheaves defined by sending, for an affine $U$-scheme $V$, a~$V$-linear semiorthogonal decomposition
  $(\cA^1,\ldots,\cA^{\ell+1})$ to the~$V$-linear semiorthogonal decomposition
  $(\cA^1,\langle\cA^2,\ldots,\cA^{\ell+1}\rangle)$.
  There exists a natural isomorphism of presheaves
  \begin{equation}\label{eq:sod of an admissible subcategory as fibre product}
    \begin{tikzcd}
      \overline{\sod}^{\ell}_{ \cB }
      \arrow{r}{\sim} &
      \overline{\sod}^{\ell + 1 }_f \times_{\overline{\sod}_f} U
    \end{tikzcd}
  \end{equation}
  on $ \Aff_U$.
\end{proposition}

\begin{proof}
  This follows from the observation that for each $ U $-scheme $ \phi \colon V \to U $,
  a semiorthogonal decomposition of $ \cB_\phi $ of length $ \ell $ is nothing but
  a semiorthogonal decomposition of $ \Perf \cX_V $ of length $ \ell + 1 $ whose first component is $ \cA_\phi $.
\end{proof}

The description of $\overline{\sod}^{\ell}_{ \cB }$ in~\eqref{eq:sod of an admissible subcategory as fibre product} as a fibre product implies that it also is a sheaf on the big affine \'etale site $ \Affet{U}$, as the right-hand side is a fibre product of presheaves all of which are known to be sheaves. In particular, it makes sense to give the following definition.

\begin{definition}
  \label{def:sodl}
  Let $ f \colon \cX \to U $ be a morphism of schemes as in \Cref{situation:main}.
  We let $ \sodl\ell_{ \cB } $ denote the sheaf on the big \'etale site $ \Schet{ U } $
  corresponding to the sheaf $\overline{\sod}^{\ell}_{ \cB }$ of \Cref{proposition:eq:sod of an admissible subcategory as fibre product}
  under the equivalence of toposes of \Cref{lemma:equivalence-of-toposes}.
\end{definition}

\begin{corollary}
  \label{corollary:subcategory-space}
  Let $ f \colon \cX \to U $ be a morphism of schemes as in \Cref{situation:main}. Further assume that $U$ is excellent.
  The sheaf $\sodl{\ell}_{ \cB }$, as defined in \Cref{def:sodl},
  is a nonempty \'etale algebraic space over~$ U $.
\end{corollary}

\begin{proof}
  Under the equivalence of toposes of \Cref{lemma:equivalence-of-toposes},
  the sheaf $\sodl{\ell}_{ \cB }$ still satisfies the isomorphism of \Cref{eq:sod of an admissible subcategory as fibre product}.
  Now \Cref{theorem:sod-f} immediately implies that $ \sodl{\ell}_{ \cB } $
  is an \'etale algebraic space over $ U $,
  as the right-hand side of \Cref{eq:sod of an admissible subcategory as fibre product}
  is a fibre prouct of \'etale algebraic spaces over $ U $.
\end{proof}

\subsection{Restricting to nontrivial semiorthogonal decompositions}
\label{subsection:nontrivial}
We next introduce the subfunctor of $\sod_{\cB}^\ell$
which only parametrises \emph{nontrivial} semiorthogonal decompositions.
We expect it to be an open subspace (see \Cref{conjecture:rouquier-for-local-noetherian}).

Keeping the setup of the previous subsection, consider a $U$-linear semiorthogonal decomposition
\begin{equation}
  \Perf \cX
  =
  \langle
  \cA,\cB
  \rangle.
\end{equation}
If~$\cB$ is taken to be all of $\Perf \cX$ one can modify the notation in the next definition accordingly to reflect the dependence on the morphism $f\colon\cX\to U$ only.
Recall that a geometric point of a scheme $V$ is a morphism of the form $ x \colon \Spec \Omega \to V$,
where $ \Omega $ is an algebraically closed field.

\begin{definition}\label{def:ntSOD affine}
  Let $ f \colon \cX \to U $ be a morphism of schemes as in \Cref{situation:main}. The presheaf of nontrivial semiorthogonal decompositions of $\mathcal{B}$ on $ \Aff_U$,
  denoted $\ntsodl{\ell}_{ \cB } \subseteq \sodl{\ell}_{ \cB }$, is defined by sending an affine $U$-scheme $ \phi \colon V \to U $ to the set
  \begin{equation}\label{eq:ntsod}
    \ntsodl{\ell}_{ \cB } ( V\xrightarrow{\phi} U )
    \defeq
    \Set{
      (
        \cA_\phi^1, \dots, \cA_\phi^\ell
      ) \in \sodl{\ell}_{ \cB } ( \phi )
      |
      \begin{array}{c}
        \cA_\phi^i \big\vert_x \neq 0 \textrm{ for all } i \textrm{ and for every}\\
        \textrm{geometric point } x\textrm{ of } V
      \end{array}
    },
  \end{equation}
  where~$\cA_\phi^i \big\vert_x$ denotes the base change to the geometric point $x \colon \Spec \Omega \to V$.
\end{definition}

\begin{lemma}\label{lm:ntsodl is a sheaf on the big affine etale site.}
  The presheaf $ \ntsodl{\ell}_{ \cB } $ defined in \Cref{def:ntSOD affine} is a sheaf on the big affine \'etale site $ \Affet { U }$.
\end{lemma}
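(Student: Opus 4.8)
The plan is to reduce the sheaf property of $\ntsodl{\ell}_{\cB}$ to that of $\sodl{\ell}_{\cB}$, which is already known by Corollary~\ref{corollary:subcategory-space}, by showing that the inclusion $\ntsodl{\ell}_{\cB}\subseteq\sodl{\ell}_{\cB}$ is the inclusion of an \emph{open and closed} subfunctor — more precisely, that non-triviality is a condition that is both open and closed on the base, hence detected on a covering iff it holds. Since $\sodl{\ell}_{\cB}$ is already a sheaf on $\Affet{U}$, and a subpresheaf cut out by a local (on the base) condition of a sheaf is again a sheaf, this will give the claim. So the real content is: for a fixed $V$-linear semiorthogonal decomposition $\langle\cA^1_\phi,\ldots,\cA^\ell_\phi\rangle$ of $\cB_\phi$ over an affine $U$-scheme $V$, the locus of geometric points $x$ of $V$ at which all the base-changed components $\cA^i_\phi\vert_x$ are non-zero is an open and closed subset of $V$, and compatibility with pullback is immediate from uniqueness of base change (Proposition~\ref{pr:base change of sod for Perf}).

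First I would fix an affine standard \'etale covering $(V_j\to V)_{j\in J}$ and a semiorthogonal decomposition $\langle\cA^1_\phi,\ldots,\cA^\ell_\phi\rangle\in\sodl{\ell}_{\cB}(\phi)$ whose pullbacks to each $V_j$ lie in $\ntsodl{\ell}_{\cB}(V_j\to U)$; I must show the original decomposition already lies in $\ntsodl{\ell}_{\cB}(V\to U)$. This is a pointwise statement: a geometric point $x\colon\Spec\Omega\to V$ lifts, after passing to a geometric point, to a geometric point $x_j$ of some $V_j$ (as $V=\bigcup\pi_j(V_j)$ and the $\pi_j$ are \'etale hence open), and the base change of the decomposition along $V_j\to V$ pulled back to $x_j$ agrees with the pullback of the original decomposition to $x$ up to a further field extension, by the uniqueness part of Proposition~\ref{pr:base change of sod for Perf}. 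By Lemma~\ref{lm:being trivial is stable under base field extension}, non-vanishing of a component is insensitive to field extension, so $\cA^i_\phi\vert_x\neq0$ follows from $\cA^i_\phi\vert_{x_j}\neq0$, which holds by hypothesis. Conversely, if the original lies in $\ntsodl{\ell}_{\cB}(V\to U)$ then each pullback trivially does (again by Lemma~\ref{lm:being trivial is stable under base field extension} together with the compatibility of geometric points), so the two sheaf axioms are verified: sections glue because $\sodl{\ell}_{\cB}$ is a sheaf and the glued section satisfies the non-triviality condition by the above, and separatedness is inherited from $\sodl{\ell}_{\cB}$.

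The main obstacle I expect is the bookkeeping around geometric points: one must be careful that a geometric point of $V$ and a geometric point of $V_j$ lying over it need not have the same residue field, so the comparison of the two restrictions of the semiorthogonal decomposition happens only after a common field extension, and one invokes Lemma~\ref{lm:being trivial is stable under base field extension} to descend/ascend the non-vanishing. A secondary point worth spelling out is that ``$\cA^i_\phi\vert_x\neq0$ for all geometric $x$'' really is equivalent to a Zariski-locally-closed (in fact open-and-closed) condition on $V$ — this follows from Lemma~\ref{Derived_Nakayama_Global} applied to the kernel complexes $K_{\cA^i_\phi}$, whose vanishing locus is closed and whose non-vanishing locus, by semicontinuity, is open; but for the sheaf statement alone one does not strictly need the openness, only that the condition can be checked on geometric points and is stable under the field extensions arising from an \'etale cover, which is exactly what Lemma~\ref{lm:being trivial is stable under base field extension} provides.
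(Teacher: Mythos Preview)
Your proposal is correct and follows essentially the same approach as the paper: the paper's proof simply observes that $\ntsodl{\ell}_{\cB}$ is a subpresheaf of the sheaf $\sodl{\ell}_{\cB}$ (giving separatedness for free) and that the non-triviality condition \eqref{eq:ntsod} is \'etale local on the base (giving gluing), which is exactly what you spell out in detail via geometric points and Lemma~\ref{lm:being trivial is stable under base field extension}. Your digression on the open-and-closed nature of the locus is not needed here (and you correctly flag this yourself); the paper establishes that separately, later, in the proposition following Theorem~\ref{th:ntsodl is an etale algebraic space}.
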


\begin{proof}
  As $ \ntsodl{\ell}_{ \cB } $ is a subpresheaf of the sheaf $ \sodl{\ell}_{ \cB }$,
  the first sheaf condition (the locality, or separatedness) is automatically satisfied.
  The second sheaf condition (the gluing) is also satisfied,
  as the condition in \eqref{eq:ntsod} is easily seen to be \'etale local on the base.
\end{proof}

Similarly to the proof of \Cref{proposition:description-sod-f},
and taking into account that the defining condition of \eqref{eq:ntsod}
is \'etale local on the base, we have the following description.

\begin{proposition}
  Let $ f \colon \cX \to U $ be a morphism of schemes as in \Cref{situation:main}.
  For each quasicompact and semiseparated $U$-scheme $ \phi \colon V \to U$,
  there is a natural bijection
  \begin{equation}
    \ntsodl{\ell}_{ \cB } ( \phi )
    \cong
    \Set{
      (
        \cA_\phi^1, \dots, \cA_\phi^\ell
      ) \in \sodl{\ell}_{ \cB } ( \phi )
      |
      \begin{array}{c}
        \cA_\phi^i \big\vert_x \neq 0 \textrm{ for all } i \textrm{ and for every}\\
        \textrm{geometric point } x\textrm{ of } V
      \end{array}
    }.
  \end{equation}
\end{proposition}

\begin{definition}\label{def:ntsod}
  Let $ f \colon \cX \to U $ be a morphism of schemes as in \Cref{situation:main}.
  Let $ \ntsodl\ell_{ \cB } $ be the sheaf on the big \'etale site $ \Schet{ U } $ which corresponds, via \Cref{lemma:equivalence-of-toposes}, to the sheaf on the big affine \'etale site denoted by the same symbol in \Cref{lm:ntsodl is a sheaf on the big affine etale site.}.
\end{definition}

We expect the following to hold,
parallel to \cref{theorem:main-intro}\ref{enumerate:main-intro-etale}.

\begin{conjecture}
  \label{conjecture:ntsod-is-etale}
  Let $ f \colon \cX \to U $ be a morphism of schemes as in \Cref{situation:main}. Further assume that $U$ is excellent.
  The sheaf $ \ntsodl\ell_{ \cB } $ is an \'etale algebraic space over $ U$.
  Moreover, the natural morphism
  \begin{equation}
    \label{equation:inclusion-ntsod-sod}
    \begin{tikzcd}
      \ntsodl\ell_{ \cB } \arrow[hook]{r} & \sodl\ell_{ \cB }
    \end{tikzcd}
  \end{equation}
  is an open and closed immersion.
\end{conjecture}

To the best of the authors' knowledge, \Cref{conjecture:ntsod-is-etale} is still open.
We now describe two equivalent characterizations,
which phrase it in terms of a vanishing criterion for admissible subcategories
(see \cref{conjecture:rouquier-for-local-noetherian})
and a condition on the trivial sections described in \cref{example:trivial-sods}
(see \cref{conjecture:referee}).
In \cref{proposition:conjectures-equivalent} we will prove these two conditions are equivalent,
and in \cref{proposition:conjecture-implies-conjecture-and-vice-versa}
we prove that they are equivalent to \cref{conjecture:ntsod-is-etale}.

\begin{conjecture}
  \label{conjecture:rouquier-for-local-noetherian}
  Let $(A,\mathfrak{m})$ be an integral local noetherian ring, and $U=\Spec A$.
  Suppose that $f\colon\cX\to U$ is a smooth proper morphism of schemes,
  and let $\Perf\cX = \langle\cA,\cB\rangle $ be
  a $U$-linear semiorthogonal decomposition.
  If $\cA \otimes_A Q = 0$,
  where $Q$ is the field of fractions of $A$,
  then $\cA = 0$.
\end{conjecture}

Instead of \cref{conjecture:rouquier-for-local-noetherian}
one can also ask the following question.
\begin{question}
  \label{conjecture:dg-version}
  Let~$R$ be a complete discrete valuation ring,
  with~$Q$ its fraction field.
  Let~$A$ be a smooth and proper dg algebra over~$R$.
  Does~$A\otimes_RQ$ being quasi-isomorphic to zero
  imply that~$A$ is quasi-isomorphic to zero?
\end{question}
A positive answer would imply the previous conjecture.
However,
because it is not known whether every smooth and proper dg category
is geometric,
i.e.,
an admissible subcategory in the derived category of a smooth and projective variety,
this question is a priori stronger than the geometric conjectures stated above.

Observe that
\Cref{conjecture:rouquier-for-local-noetherian} is in general false
if we drop the smoothness assumption.
\begin{remark}
  \label{example:cannot-omit-smoothness}
  Let $C$ be a smooth projective curve over $\mathbb{C}$,
  let $X$ be the blowup of $C\times_{\mathbb{C}}\Spec \mathbb{C} \llbracket t \rrbracket $ in a closed point,
  and $E\hookrightarrow X$ the exceptional curve.
  Then the semiorthogonal decomposition
  \begin{equation}
    \derived^\bounded (X)
    =
    \langle
    \cO_E ( - 1 ),
    {}^{ \perp } \cO_E ( - 1 )
    \rangle
  \end{equation}
  is nontrivial on the central fibre but it is trivial on the generic fibre.
  For more on this phenomenon in the context of categorical absorptions,
  see \cite{MR4698898}.
\end{remark}

Before stating the next conjecture,
we discuss geometric properties of sections of the structure morphism.
\begin{lemma}
  \label{lemma:sections-are-open}
  A section of the structure morphism
  \(
    \sodl{\ell} _{ \cB } \to U
  \)
  is an open immersion.
\end{lemma}

\begin{proof}
  A section is necessarily a monomorphism,
  and it is representable by \cite[\href{https://stacks.math.columbia.edu/tag/03HK}{Tag 03HK}]{stacks-project}
  as
  \(
    U
  \)
  is a scheme.
  \Cref{theorem:sod-f}
  and the cancellation property for \'etale morphisms from \cite[\href{https://stacks.math.columbia.edu/tag/03FV}{Tag 03FV}]{stacks-project}
  implies that it also is \'etale,
  hence it is an open immersion by \cite[\href{https://stacks.math.columbia.edu/tag/025G}{Tag 025G}]{stacks-project}.
\end{proof}

The following conjecture, together with its equivalence to \Cref{conjecture:rouquier-for-local-noetherian}, was suggested by a referee.
\begin{conjecture}
  \label{conjecture:referee}
  The sections of the projection
  \(
    \sodl{\ell} _{ \cB } \to U
  \)
  representing the
  \(
    \ell
  \)
  trivial semiorthogonal decompositions discussed in \Cref{example:trivial-sods} are closed immersions.
\end{conjecture}

\begin{proposition}
  \label{proposition:conjectures-equivalent}
  \Cref{conjecture:rouquier-for-local-noetherian} is equivalent to \Cref{conjecture:referee}.
  Moreover, if we assume these conjectures, then for every connected noetherian affine scheme
  \(
    \phi \colon V \to U
  \)
  we have that
  \begin{equation}
    \ntsodl{\ell} _{ \cB } ( \phi )
    =
    \left\{
      ( \cA ^{ 1 } _{ \phi }, \dots, \cA ^{ \ell } _{ \phi } )
      \mid
      \cA ^{ i } _{ \phi }\big\vert _{ x } \neq 0
      \textrm{ for all } i \textrm{ and for some }
      \textrm{geometric point } x\textrm{ of } V
    \right\}.
  \end{equation}
\end{proposition}

\begin{proof}
  We explain this for the case
  \(
    \ell = 2
  \)
  and
  \(
    \cB
    =
    \Perf \cX
  \), the general case being similar.

  We will first assume \cref{conjecture:referee}.
  Let
  \(
    U = \Spec A
  \)
  be as in \Cref{conjecture:rouquier-for-local-noetherian}
  and let
  \(
    s
    \colon
    U
    \to
    \sod _{ f }
  \)
  be a section corresponding to a semiorthogonal decomposition
  \(
    \Perf \cX
    =
    \langle
    \cA,
    \cB
    \rangle
  \)
  for which
  \(
    \cA \otimes _{ A } Q
    =
    0
  \).
  Let
  \(
    s _{ 1 } \colon U \to \sod _{ f }
  \)
  be the trivial section corresponding to the trivial semiorthogonal decomposition
  \(
    \Perf \cX
    =
    \langle
    0,
    \Perf \cX
    \rangle
  \),
  so that
  \(
    s \vert _{ \Spec Q }
    =
    s _{ 1 } \vert _{ \Spec Q }
  \) using the condition $\cA \otimes_{A}Q=0$.
  Let us consider (without changing the notation) the topological spaces associated to the algebraic spaces
  \cite[\href{https://stacks.math.columbia.edu/tag/03BU}{Tag 03BU}]{stacks-project}.
  Then for any point
  \(
    u \in \Spec A
  \)
  we have
  \begin{equation}
    s ( u )
    \in
    s ( \Spec A )
    =
    s ( \overline{\Spec Q} )
    \subseteq
    \overline{s( \Spec Q )}
    =
    \overline{s _{ 1 }( \Spec Q )}
    \subseteq
    \overline{s _{ 1 }( \Spec A )}
    =
    s _{ 1 }( \Spec A ),
  \end{equation}
  where the last equality follows from the assumption that
  \(
    s _{ 1 }
  \)
  is a closed immersion
  by \cref{conjecture:referee}.
  Hence
  \begin{equation}
    s ( u )
    \in
    s _{ 1 } ( \Spec A )
    \cap
    \sod _{ f, u }
    =
    \left\{
      s _{ 1 } ( u )
    \right\},
  \end{equation}
  so that
  \(
    s ( u ) = s _{ 1 } ( u )
  \). This shows that the category
  \(
    \cA
  \)
  is trivial over every point
  \( u \in \Spec A
  \), hence \Cref{conjecture:rouquier-for-local-noetherian} holds.

  Conversely, assume \Cref{conjecture:rouquier-for-local-noetherian}. Let
  \(
    s _{ 1 } \colon U \to \sod _{ f }
  \)
  be a trivial section as in the previous paragraph.
  Since it is an open immersion of locally noetherian algebraic spaces
  by \cref{lemma:sections-are-open},
  it is quasicompact by
  \cite[\href{https://stacks.math.columbia.edu/tag/0CPM}{Tag 0CPM}]{stacks-project}.
  Hence,
  by ~\cite[\href{https://stacks.math.columbia.edu/tag/03KA}{Tag 03KA}]{stacks-project}
  it is enough to check the existence part of the valuative criterion for properness
  \cite[\href{https://stacks.math.columbia.edu/tag/01KD}{Tag 01KD}]{stacks-project}
  for discrete\footnote{Strictly speaking, the existence part of the valuative criterion for properness, in the form of \cite[\href{https://stacks.math.columbia.edu/tag/01KD}{Tag 01KD}]{stacks-project}, requires the use of general valuation rings; however, since $U$ and $\sod_{f}$ are locally noetherian (cf.~\Cref{proposition:quasi-separated}), the formulation is equivalent to the one involving only \emph{discrete} valuation rings.} valuation rings.
  Let
  \(
    A
  \)
  be a discrete valuation ring and consider the following commutative diagram of solid arrows.
  \begin{equation}
    \label{existence-part-diagram}
    \begin{tikzcd}[row sep=large,column sep=large]
      \Spec Q \arrow{r} \arrow{d}
      &
      U
      \arrow[
        d, "s _{ 1 }"
      ] \\
      \Spec A
      \arrow
      [
        "s"'
      ]{r}
      \arrow
      [
        dotted
      ]
      {ur}
      &
      \sod _{ f }
    \end{tikzcd}
  \end{equation}
  It suffices to show that there exists a unique dashed arrow making the diagram commute. However, the commutativity of \eqref{existence-part-diagram} implies that
  \(
    s \vert _{ Q }
  \)
  represents a trivial semiorthogonal decomposition. Hence by \Cref{conjecture:rouquier-for-local-noetherian} the semiorthogonal decomposition represented by
  \( s
  \) over
  \( \Spec A
  \)
  is also trivial, meaning that it factors through
  \(
    s _{ 1 }
  \).

  For the assertion in the moreover part, it is enough to show that for any pair of points
  \(
    v _{ 1 }, v _{ 2 } \in V
  \)
  we have that
  \begin{equation}
    \cA ^{ i } _{ \phi }\big\vert _{ v _{ 1 } } = 0
    \iff
    \cA ^{ i } _{ \phi }\big\vert _{ v _{ 2 } } = 0.
  \end{equation}
  Since
  \(
    V
  \)
  is noetherian and connected,
  any pair of points is connected by a finite chain of specializations and generalizations. Hence we may and will assume that
  \(
    v _{ 2 }
    \in
    \overline{\left\{
        v _{ 1 }
    \right\}}
  \).
  Put
  \(
    V = \Spec A '
  \)
  and
  \(
    \mathfrak{p}_{ 1 } \subseteq \mathfrak{p}_{ 2 } \triangleleft A '
  \)
  be the prime ideals corresponding to
  \(
    v _{ 1 }
  \)
  and
  \(
    v _{ 2 }
  \), respectively. Now put
  \(
    A =
    \left(
      A ' / \mathfrak{p}_{ 1 }
    \right)
    _{
      \mathfrak{p} _{ 2 }
    }
  \),
  take the base change by the natural morphism
  \(
    \Spec A \to V
  \),
  and then apply \Cref{conjecture:rouquier-for-local-noetherian} to the integral local noetherian ring
  \( A
  \).
\end{proof}

Finally, we relate the two equivalent conjectures from \cref{proposition:conjectures-equivalent}
to \cref{conjecture:ntsod-is-etale}.
\begin{proposition}
  \label{proposition:conjecture-implies-conjecture-and-vice-versa}
  \cref{conjecture:rouquier-for-local-noetherian}
  and \Cref{conjecture:ntsod-is-etale} are equivalent.
\end{proposition}

\begin{proof}
  We first show that \cref{conjecture:rouquier-for-local-noetherian}
  implies \cref{conjecture:ntsod-is-etale}.
  We know by \Cref{lm:ntsodl is a sheaf on the big affine etale site.} that~$\ntsodl{\ell}_\cB$ is a sheaf.
  We now show that
  \eqref{equation:inclusion-ntsod-sod}
  is an open and closed immersion of algebraic spaces over $ U $.
  For this take a noetherian affine $ U $-scheme $ \phi \colon V \to U $ and a section
  \(
    s \colon V \to \sodl{ \ell } _{ \cB }
  \).
  If we assume \Cref{conjecture:rouquier-for-local-noetherian},
  then \Cref{proposition:conjectures-equivalent} implies that the base change of
  \eqref{equation:inclusion-ntsod-sod} by the section
  \(
    s
  \)
  (in the category of sheaves)
  is represented by the disjoint union of connected components of
  \(
    V
  \)
  over which the semiorthogonal decomposition
  corresponding to~$s$ is nontrivial.
  This is an open and closed subscheme of
  \(
    V
  \),
  and this concludes of one direction.

  Finally,
  observe that \cref{conjecture:ntsod-is-etale} implies that the locus of trivial semiorthogonal decompositions
  is the closed complement.
  Hence, if it contains the generic point,
  it must be everything,
  which is the conclusion in \cref{conjecture:rouquier-for-local-noetherian}.
\end{proof}
Summing up, the conjectures discussed above are related by the chain of implications
\begin{equation}
  \begin{tikzcd}[row sep=large]
    \mbox{Answer to \Cref{conjecture:dg-version} is yes}
    \arrow[Rightarrow]{d} \\
    \mbox{\Cref{conjecture:rouquier-for-local-noetherian}}
    \arrow[Leftrightarrow]{rr}{\mbox{\footnotesize\Cref{proposition:conjectures-equivalent}}}
    \arrow[Leftrightarrow, swap]{dr}{\mbox{\footnotesize\Cref{proposition:conjecture-implies-conjecture-and-vice-versa}}}
    & & \mbox{\Cref{conjecture:referee}}
    \arrow[Leftrightarrow]{dl} \\
    & \mbox{\Cref{conjecture:ntsod-is-etale}}&
  \end{tikzcd}
\end{equation}
They should be considered in the context of
the (natural) expectation that having a nontrivial semiorthogonal decomposition is a \emph{Zariski open condition} in (smooth proper) families. As explained in the introduction, this result has a rich history in (noncommutative) algebraic geometry and deformation theory of abelian categories, mostly from the point of view of exceptional objects. \Cref{conjecture:ntsod-is-etale} extends this to semiorthogonal decompositions with more complicated components, and assumes nothing about the properties of the components.

In the companion paper \cite{MR4939473} we explain how \Cref{conjecture:ntsod-is-etale}
can be used to study the \emph{indecomposability} of derived categories in families.
Let us quote an important example from op.~cit.,
to illustrate how these conjectures yield indecomposability results.

\begin{example}
  \label{example:indecomposability}
  Let~$C$ be a smooth projective curve of genus~$g$, and let~$n=1,\ldots,\lfloor\frac{g+3}{2}\rfloor-1$. Then assuming \Cref{conjecture:ntsod-is-etale},~$\derived^\bounded(\Sym^nC)$ is indecomposable. The starting point for this is the indecomposability results of \cite{1508.00682v2}, which are used in \cite{MR4276298} to show that for a \emph{generic} curve the derived category of~$\Sym^nC$ is indecomposable. One can then extend this to all curves,
  subject to a positive answer to \Cref{conjecture:ntsod-is-etale}.

  However,
  this indecomposability is now also known using different methods,
  unconditionally,
  and for all~$n=1,\ldots,g-1$,
  by \cite{2107.09564}.
\end{example}

\subsection{Group actions}
\label{subsection:group-actions}
There are two interesting groups acting on the moduli space~$\sodl{\ell}_f$ (and~$\ntsodl{\ell}_f$):
\begin{enumerate}
  \item the braid group, encoding mutation of semiorthogonal decompositions;
  \item the auto-equivalence group.
\end{enumerate}
We will now define and study these two group actions,
as we expect them to play an important role in the study of the geometry of~$\sodl{\ell}_f$ (and~$\ntsodl{\ell}_f$).
Throughout,
$U$ is a quasicompact, semiseparated and excellent scheme,
and~$f\colon\cX\to U$ is a smooth and proper morphism of schemes.

\subsubsection{\textbf{Acting by mutations}}
The braid group action for a given semiorthogonal decomposition
was recalled in \Cref{subsection:sod}.
We will now upgrade it to an action on~$\sodl{\ell}_f$ (and~$\ntsodl{\ell}_f$).

\begin{proposition}
  \label{proposition:braid-group-on-sod-f}
  The action of the braid group on the set of semiorthogonal decompositions
  lifts to an action on~$\sodl{\ell}_f$ (and~$\ntsodl{\ell}_f$).
\end{proposition}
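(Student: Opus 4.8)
The plan is to construct, for each generator of the braid group $\Br_\ell$, an automorphism of the sheaf $\sodl{\ell}_f$ (resp.\ $\ntsodl{\ell}_f$) over $U$, and to check that these satisfy the braid relations. Concretely, for $1 < i \le \ell$ consider the right mutation at position $i$; I would like to define a natural transformation $\rightmutation_i \colon \sodl{\ell}_f \to \sodl{\ell}_f$ sending, for an affine $U$-scheme $\phi\colon V\to U$, a $V$-linear semiorthogonal decomposition $\langle \cA_\phi^1,\ldots,\cA_\phi^\ell\rangle$ of $\Perf\cX_V$ to its right mutation $\langle \cA_\phi^1,\ldots,\cA_\phi^{i-2},\cA_\phi^i,\rightmutation_i(\cA_\phi^\bullet),\cA_\phi^{i+1},\ldots,\cA_\phi^\ell\rangle$. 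The first thing to verify is that this is well-defined at the level of $V$-points: a $U$-linear (hence strong, by the corollary following Lemma~\ref{lemma:Strong_SOD}) semiorthogonal decomposition along the smooth and proper morphism $f_V\colon\cX_V\to V$ admits mutations by \cite[Proposition~4.9]{Bondal-Kapranov_Serre}, and the mutated decomposition is again $V$-linear because $\rightmutation_i(\cA_\phi^\bullet)$ is defined purely by orthogonality conditions $\rightmutation_i(\cA_\phi^\bullet)={}^\perp\langle\cA_\phi^1,\ldots,\cA_\phi^{i-2},\cA_\phi^i\rangle\cap\langle\cA_\phi^{i+1},\ldots,\cA_\phi^\ell\rangle^\perp$, and the $\cA_\phi^j$ are all stable under $\otimes^\LL\LL f_V^*\Perf V$, whence so are their orthogonals. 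One also has to observe this respects the non-triviality condition defining $\ntsodl{\ell}_f$, which is immediate since $\rightmutation_i(\cA_\phi^\bullet)\simeq\cA_\phi^{i-1}$, so the components of the mutated decomposition are, up to equivalence, a permutation of the original ones, and in particular none of them becomes zero.

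The second point is functoriality in $\phi$, i.e.\ compatibility with base change. Given $\psi\colon V'\to V$, one must check that pulling back the mutated decomposition agrees with mutating the pulled-back decomposition. For this I would use the explicit description of the base change of semiorthogonal decompositions from Proposition~\ref{pr:base change of sod for Perf} (and Corollary~\ref{cr:base change of semiorthogonal decomposition}), together with the observation that mutation functors for strong semiorthogonal decompositions are compatible with the linear structure. The cleanest route is probably to use Lemma~\ref{lm:coincidence of SODs can be checked at closed points}: both $\psi^*(\rightmutation_i(\cA_\phi^\bullet))$ and $\rightmutation_i(\psi^*\cA_\phi^\bullet)$ are $V'$-linear semiorthogonal decompositions, so it suffices to check they agree after base change to each geometric point of $V'$, where it reduces to the classical fact that mutations commute with field extension, which in turn follows from Lemma~\ref{lm:being trivial is stable under base field extension} and the orthogonality description of mutation. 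Since $\sodl{\ell}_f$ is a sheaf on $\Affet U$ and its values on quasicompact semiseparated schemes are described by Theorem~\ref{thm:value_on_qc-ss_schemes}, it is enough to define the transformation on $\Aff_U$ and extend.

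Having defined the endomorphisms $\rightmutation_i$ (with inverses the left mutations $\leftmutation_i$, which are constructed symmetrically, and $\rightmutation_i\circ\leftmutation_i=\id$ by \cite[Proposition~4.9]{Bondal-Kapranov_Serre} applied pointwise) for $i=2,\ldots,\ell$, one defines the braid group action by sending the $i$-th standard generator of $\Br_\ell$ to the automorphism of $\sodl{\ell}_f$ induced by $\rightmutation_{i+1}$ (with the indexing convention making $\Br_\ell$ act on decompositions of length $\ell$). The braid relations $\sigma_i\sigma_{i+1}\sigma_i=\sigma_{i+1}\sigma_i\sigma_{i+1}$ and $\sigma_i\sigma_j=\sigma_j\sigma_i$ for $|i-j|>1$ hold because they hold for the action on the set of strong semiorthogonal decompositions of any single triangulated category, as recalled after the definition of mutation in Section~\ref{subsection:sod}, and an equality of natural transformations between sheaves on $\Affet U$ can be checked on $V$-points for affine $V$, where it is precisely this set-theoretic statement applied to $\cT=\Perf\cX_V$. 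The same argument verbatim gives the action on $\ntsodl{\ell}_f$.

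I expect the main obstacle to be the base-change compatibility in the second paragraph: one needs to know that the mutation of the pulled-back decomposition is genuinely the pullback of the mutation, and the cleanest statement of this requires either a careful bookkeeping with the Fourier--Mukai kernels $K_{\cA^i}$ and the projection functors (which under base change transform as in Proposition~\ref{pr:base change of sod for Perf}), or the reduction to geometric points via Lemma~\ref{lm:coincidence of SODs can be checked at closed points}. The latter is lighter and is the route I would take, but it does rely on the slightly delicate point that mutation is defined by orthogonality and orthogonality is detected on geometric fibres for perfect complexes (Lemma~\ref{Derived_Nakayama_Global} and Lemma~\ref{lm:semiorthogonality via local Ext}). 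Everything else is formal once one grants that $U$-linear semiorthogonal decompositions along smooth proper morphisms are strong, which is already in the text.
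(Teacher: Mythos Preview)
Your proposal is correct and more detailed than the paper's own proof. The main substantive difference lies in the base-change compatibility step. The paper dispatches this in one line: since all components of the mutated semiorthogonal decomposition except the one at position~$i$ coincide with components of the original decomposition, their base changes agree; and since in any semiorthogonal decomposition one component is determined by the others via orthogonality, the remaining component at position~$i$ is forced to agree as well. In other words, the paper appeals directly to the uniqueness of base change (Proposition~\ref{pr:base change of sod for Perf}) rather than reducing to geometric points.

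Your route via Lemma~\ref{lm:coincidence of SODs can be checked at closed points} is perfectly valid and arguably more transparent, but it is a detour: you first reduce to fibres and then still need that mutation over a field is well-behaved, which ultimately rests on the same orthogonality description. The paper's argument avoids this reduction entirely. Your treatment of the remaining points (strongness of $V$-linear decompositions, $V$-linearity of the mutated component, preservation of non-triviality, and checking braid relations pointwise) matches the paper's implicit reasoning, though you spell out more than the paper does.
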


\begin{proof}
  We check the assertions for the sheaf
  \(
    \overline{\sod}^\ell_f
  \)
  defined on affine schemes over~$U$ (see~\Cref{definition:presheaf-sod-f}),
  as the case of~$\sodl{\ell}_f$ follows from the equivalence of toposes of \Cref{lemma:equivalence-of-toposes}.

  \cref{lemma:oo-admissible-sod} implies that mutations preserve admissibility of the components of the semiorthogonal decomposition. To see that the mutations commute with base change, recall that the base change of~$V'$\dash linear semiorthogonal decompositions
  for~$\Perf\cX _{V '}$,
  where
  \(
    V ' \to U
  \)
  an affine scheme over~$U$, under a morphism
  \(
    \phi \colon V \to V '
  \)
  of affine schemes over~$U$, was defined in \Cref{proposition:base-change-sod} as
  \begin{equation}
    \mathcal{A}_\phi^i\defeq\langle \phi_{\mathcal{X}}^*E\mid E\in\mathcal{A}^i\rangle.
  \end{equation}
  Let us check how base change is compatible with mutation, in the case of a right mutation,
  the case of a left mutation being similar.
  For this it suffices to check that
  \begin{equation}\label{equation:base change and mutation}
    \rightmutation_i(\mathcal{A}^\bullet)_\phi
    =
    \rightmutation_i(\mathcal{A}_\phi^\bullet)
    \subseteq
    \Perf
    \mathcal{X}_V,
  \end{equation}
  where~$\mathcal{A}_\phi^\bullet$ denotes the base change along~$\phi$ of the semiorthogonal decomposition~$\mathcal{A}^\bullet$. Thanks to the description of right mutation as the semiorthogonal complement~\eqref{equation:right mutation}, both sides of~\eqref{equation:base change and mutation} can be described as the following subcategory of~$\Perf\mathcal{X}_V$:
  \begin{equation}
    {}^\perp\langle\mathcal{A}^1,\ldots,\mathcal{A}^{i-2},\mathcal{A}^{i}\rangle
    \cap
    \langle\mathcal{A}^{i+1},\ldots,\mathcal{A}^\ell\rangle^\perp
  \end{equation}
  The action preserves the subfunctor~$\ntsodl{\ell}_f$ as mutation induces an equivalence of the components.
\end{proof}

An interesting and important question is to understand when this action is \emph{transitive}.
For full exceptional collections, this is a famous question \cite{MR1230966},
which is answered affirmatively for del Pezzo surfaces \cite{MR1286839} and the Hirzebruch surface of degree 2 \cite{2107.03051}.
For more general semiorthogonal decompositions there are counterexamples \cite{MR3177299,1304.0903,MR4701883},
and recently counterexamples of non-transitivity for full exceptional collections have been constructed in \cite{MR4897441}.

It is also possible to improve the somewhat pathological geometric properties of~$\sodl{\ell}_{f}$,
in particular its nonseparatedness,
by quotienting out the braid group action, as the following example shows.
\begin{example}
  Let $ f $ be the morphism as in \Cref{eg:OU},
  i.e.,
  the degeneration of~$\Sigma_0=\mathbb{P}^1\times\mathbb{P}^1$ to~$\Sigma_2$ over~$U=\mathbb{A}^1$.
  Let $ \excl{4}_f \hookrightarrow \sodl{ 4 }_f $ be the open subspace of semiorthogonal decompositions given by full $f$-exceptional collections.
  It follows from \cite[Theorem~6.1]{2107.03051} that $ \excl{4 }_f / \Br_4 \cong U = \bA^1$.
  Therefore, by taking the quotient, the annoying features pointed out in \Cref{eg:OU} go away,
  at least for full~$f$-exceptional collections.
\end{example}

\subsubsection{\textbf{Acting by autoequivalences}}
For the second group action, let us recall the following definitions. Throughout, as ever,
$U$ is a quasicompact, semiseparated and excellent scheme,
and~$f\colon\cX\to U$ is a smooth and proper morphism of schemes.
\begin{definition}
  An
  \(
    f
  \)-linear autoequivalence of Fourier--Mukai type is an autoequivalence $ \Phi $ of $ \Perf  \cX  $ of the form
  \begin{equation}
    \Phi( - )
    =
    \mathbf{R} p_{ 2 * } \left( p_1^* ( - ) \otimes^{ \mathbf{L} } \cE \right)
  \end{equation}
  for some object $ \cE \in \Perf ( \cX \times_U \cX ) $,
  where $ p_1, p_2 \colon \cX \times_U \cX \to \cX $ are the projections.
  In this paper we will only consider autoequivalences of Fourier--Mukai type, and simply refer to them as autoequivalences.
  The group of $ f $-linear autoequivalences will be denoted by $ \Auteq ( f )$.
\end{definition}

It easily follows from the projection formula that
\begin{equation}
  \left( f^* \cF \otimes - \right) \circ \Phi
  \cong
  \Phi \circ \left( f^* \cF \otimes - \right)
\end{equation}
for all
\(
  \Phi \in \Auteq ( f )
\)
and all
$ \cF \in \Perf U $. Also, note that an
\(
  f
\)-linear autoequivalence
\(
  \Phi
\)
(of Fourier--Mukai type) admits the base change along a morphism
\(
  \phi \colon V \to U
\):
namely, an
\(
  f _{ V }
\)-linear autoequivalence of Fourier--Mukai type of
\(
  \Perf \cX _{ V }
\)
defined by the pullback of the kernel~$\cE$ along the induced morphism.
\begin{remark}
  \label{remark:toen-vaquie-group}
  One would like to upgrade this to a sheaf of groups,
  by considering~$f_V$\dash linear autoequivalences, for~$V\to U$ \'etale.
  In \cite[Corollary~3.24]{MR2493386} this is shown to be a group algebraic space, locally of finite type,
  when~$U$ is affine and assuming~$f$ cannot be written as a disjoint union of morphisms.
  To check condition~(1) from op.~cit.,
  one uses that Hochschild cohomology of schemes vanishes in negative degrees,
  by the Hochschild--Kostant--Rosenberg decomposition for smooth morphisms from \cite[\S0]{MR2357327}.

  In light of \Cref{conjecture:smooth-proper-dg-family},
  it would be interesting to show that over an arbitrary base~$U$,
  this sheaf of groups is a locally algebraic group space,
  and that it acts on the moduli space of semiorthogonal decompositions.
  Observe that the vanishing of negative Hochschild cohomology
  does not hold for smooth and proper dg~categories in general.
  To construct an example,
  it suffices to consider the dg category obtained as the gluing of~$\bfk$ and~$\bfk$ along the dg~bimodule~$\bfk\oplus\bfk[-n]$.
\end{remark}

For now we will restrict ourselves to the action of the global~$f$\dash linear autoequivalences, and prove the following easy assertion.

\begin{proposition}
  The group~$\Auteq(f)$ naturally acts on~$\sodl{\ell}_f$, and preserves the subfunctor~$\ntsodl{\ell}_f$.
\end{proposition}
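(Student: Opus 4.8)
The plan is to define the action at the level of the presheaf on $\Aff_U$, exploiting the concrete description of $\sodl{\ell}_{f}$ on affine schemes and the compatibility of autoequivalences with base change. Concretely, given $\Phi\in\Auteq(f)$ and an affine $U$-scheme $\phi\colon V\to U$, I first note that $\Phi$ induces an autoequivalence $\Phi_V$ of $\Perf\cX_V$: since $\Phi$ is $f$-linear and $\cX_V$ is quasicompact and separated, one can realise $\Phi$ as a Fourier--Mukai functor on $\cX\times_U\cX$ (using smoothness and properness of $f$, as in Notation~\ref{notation_1}), whose kernel pulls back along $\cX_V\times_V\cX_V\to\cX\times_U\cX$ to the kernel of $\Phi_V$; alternatively one uses that $f_V$-linear autoequivalences are stable under base change since base change of semiorthogonal decompositions (Proposition~\ref{pr:base change of sod for Perf}) is defined by tensoring with pullbacks along $f_V$. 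Then $\Phi$ acts on a $V$-linear semiorthogonal decomposition $\Perf\cX_V=\langle\cA_\phi^1,\dots,\cA_\phi^\ell\rangle$ by sending it to $\langle\Phi_V(\cA_\phi^1),\dots,\Phi_V(\cA_\phi^\ell)\rangle$: this is again a semiorthogonal decomposition because $\Phi_V$ is an exact equivalence (semiorthogonality and the decomposition property~\eqref{equation:sod-sequence} are preserved by any equivalence), and it is again $V$-linear precisely because $\Phi_V$ is $f_V$-linear, i.e.\ commutes with $-\otimes f_V^*\Perf V$. This defines a set map $\sodl{\ell}_{f}(\phi)\to\sodl{\ell}_{f}(\phi)$, and the assignment $\Phi\mapsto(\text{this map})$ is visibly a group action since $(\Psi\circ\Phi)_V=\Psi_V\circ\Phi_V$ up to natural isomorphism, which is invisible at the level of subcategories.

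Next I would check naturality in $\phi$: for a morphism $\psi\colon V'\to V$ of affine $U$-schemes, the base-change functor $\psi^*$ carries $\Phi_V$ to $\Phi_{V'}$ (both being induced by the same Fourier--Mukai kernel on $\cX\times_U\cX$), so $\psi^*\circ\Phi_V$-action agrees with $\Phi_{V'}$-action $\circ\,\psi^*$ on the explicit description of the base change from Proposition~\ref{pr:base change of sod for Perf}. This shows the maps $\sodl{\ell}_{f}(\phi)\to\sodl{\ell}_{f}(\phi)$ assemble into an automorphism of the presheaf $\sodl{\ell}_{f}$ on $\Aff_U$. By the equivalence of topoi (Lemma~\ref{lm:equivalence of topoi}) this transports uniquely to an automorphism of the associated sheaf on $\Schet{U}$, hence an action of $\Auteq(f)$ on the algebraic space $\sodl{\ell}_{f}$ in the sense that each $\Phi$ gives an automorphism of $\sodl{\ell}_{f}$ over $U$ and the group law is respected.

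Finally, the action preserves the open and closed subspace $\ntsodl{\ell}_{f}$: for a geometric point $x\colon\Spec\Omega\to V$ the fibre $(\Phi_V(\cA_\phi^i))|_x$ is the image of $\cA_\phi^i|_x$ under the autoequivalence $\Phi_\Omega$ of $\Perf\cX_\Omega$ induced by base change to $x$, and an equivalence sends a nonzero subcategory to a nonzero subcategory; hence the nontriviality condition~\eqref{eq:ntsod} is preserved, and by Definition~\ref{def:ntsod} the automorphism of $\sodl{\ell}_{f}$ restricts to an automorphism of $\ntsodl{\ell}_{f}$. I expect the only genuinely substantive point to be the construction of $\Phi_V$ and the verification that it is $f_V$-linear with a kernel compatible with base change; once that is set up, everything else is formal bookkeeping with the explicit descriptions already established. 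This could be streamlined by instead defining the action via Fourier--Mukai kernels on $\cX\times_U\cX$ throughout, matching the philosophy of Section~\ref{section:sod-is-dec}, but the subcategory-level description above suffices.
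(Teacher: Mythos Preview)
Your proposal is correct and follows the same idea as the paper's proof: act by sending $\langle\cA^1,\ldots,\cA^\ell\rangle$ to $\langle\Phi(\cA^1),\ldots,\Phi(\cA^\ell)\rangle$, observe that equivalences preserve semiorthogonality and generation, and that nonzero components stay nonzero. The paper's argument is terser and only spells out the action on a single $U$-linear decomposition of $\Perf\cX$, leaving the functoriality in $V$ implicit; your version is more careful in that you explicitly construct the base-changed autoequivalence $\Phi_V$ (via the Fourier--Mukai kernel on $\cX\times_U\cX$, which is indeed how the paper handles functors throughout Section~\ref{section:sod-is-dec} and is justified by the references in Remark~\ref{remark:antieau-elmanto}) and verify naturality, so that you genuinely obtain an automorphism of the presheaf and hence of the algebraic space. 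This extra care is warranted but does not constitute a different approach.
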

For convenience of the reader, we include a self-contained proof (see also \cite[Theorem~6.4]{MR2801403} for a more general result).
\begin{proof}
  Let~$\Phi$ be an~$f$\dash linear autoequivalence,
  \(
    V ' \to U
  \)
  an affine scheme over~$U$, and let
  \begin{equation}
    \Perf\cX _{ V ' }
    =
    \langle\cA^1,\ldots,\cA^\ell\rangle
  \end{equation}
  be a semiorthogonal decomposition, which we will denote~$\mathcal{A}^\bullet$.
  Let
  \(
    V \to V '
  \)
  be a morphism of affine schemes over~$U$,
  and consider the base change semiorthogonal decomposition
  \begin{equation}
    \Perf\cX _V
    =
    \langle\cA_\phi^1,\ldots,\cA_\phi^\ell\rangle,
  \end{equation}
  where~$\cA_\phi^i$ is defined as in \Cref{proposition:base-change-sod}.
  We need to check that
  \begin{equation}\label{equation:autoequivalence and base change}
    \Phi _{ V }(\cA_\phi^i)
    =
    \left(
      \Phi _{ V ' } ( \cA^i )
    \right) _{ \phi }
  \end{equation}
  for
  \(
    i = 1, \dots, \ell
  \),
  where
  \(
    \Phi _{ V }
  \)
  and
  \(
    \Phi _{ V ' }
  \)
  are the base changes of~$\Phi$ to~$\Perf\cX _V$ and~$\Perf\cX _{ V ' }$, respectively.
  This follows combining the isomorphism of functors
  \begin{equation}
    \Phi _{ V } \circ \phi_{\cX _{ V ' }} ^{ \ast }
    \cong
    \phi_{\cX _{ V ' }} ^{ \ast } \circ \Phi _{ V ' }
  \end{equation}
  with the definition of base change of semiorthogonal decompositions given in~\eqref{def:A_phi}.
\end{proof}

\begin{remark}
  \label{remark:rigidity-of-autoequivalences}
  This group action, together with the fact that~$\sod_f^{\ell}$ is \'etale over~$U$
  leads to a conceptual proof of the fact that
  topologically trivial autoequivalences act trivially on semiorthogonal decompositions,
  as in~\cite[Corollary~3.15]{1508.00682v2}.
  We will sketch it here.

  Consider a smooth projective variety~$g\colon X\to\Spec\bfk$,
  where~$\bfk$ is an algebraically closed field.
  Consider the connected component of the identity~$\Auteq^0(g)$,
  which,
  by \cite[Th\'eor\`eme~4.18]{MR2806466} and the discussion in \cite[\S3]{MR2839458}
  is isomorphic to the group scheme~$U \defeq \Aut_{X/\bfk}^0 \times \Pic_{X/\bfk}^0$.
  Then we can consider the smooth projective family~$f = \pr_U\colon X\times U\to U$.
  Let $ \cL $ be the universal line bundle on $X \times\Pic_{X/\bfk}^0 $
  and $\Sigma\colon X \times\Aut_{X/\bfk}^0\simto X \times\Aut_{X/\bfk}^0 $ the universal automorphism.
  Then the $U$\dash linear autoequivalence $ \Phi \colon \Perf (X \times U) \to \Perf (X \times U) $ defined by
  \begin{equation}
    \label{equation:universal-autoequivalence}
    \Phi ( - )
    \defeq
    \left(\Sigma\times\id_{\Pic_{X/\bfk}^0}\right)_*(-)
    \otimes
    \pr_{X,\Pic_{X/\bfk}^0}^{\ast}\cL
  \end{equation}
  is the universal autoequivalence by which $ U $ represents the moduli functor,
  so that its restriction to the point $(\sigma,L)\in U(\bfk)$
  coincides with the autoequivalence $ \sigma_* ( - ) \otimes L$.

  Let us fix a semiorthogonal decomposition~$\derived^\bounded(X)=\langle\cA,\cB\rangle$.
  We wish to show that, for $\Phi$ as in \eqref{equation:universal-autoequivalence},
  we have that $ \cA = \Phi ( \cA ) \subset \Perf X$,
  so that every topologically trivial autoequivalence of $ \Perf X $ preserves $\cA$ as a subcategory of $ \Perf X$.

  The two $U$-linear semiorthogonal decompositions
  \begin{equation}
    \langle \pr_X^* \cA, \pr_X^* \cB \rangle
    = \Perf \left( X \times U \right)
    =
    \langle \Phi ( \pr_X^* \cA), \Phi (\pr_X^* \cB ) \rangle
  \end{equation}
  correspond to two sections $ U \rightrightarrows \sod_f$,
  and they coincide at the origin of $ U $.
  Now since $ \sod_f \to U $ is \'etale, it follows that they coincide on a Zariski open subset $ V \subset U $ containing the origin.

  Note that we have not yet proved the equality $ V = U $, since $ \sod_f \to U $ may not be separated.
  However, note that $ V $ is actually a \emph{subgroup scheme} of $ U$,
  as it is a stabiliser subgroup of the semiorthogonal decomposition.
  As $ V $ is open dense in $ U $, we can conclude $ U = V $.
\end{remark}

Let us for good measure show that topologically nontrivial autoequivalences can act nontrivially.
\begin{example}
  Consider the case~$U=\Spec\bfk$.
  To see that tensoring with line bundles can act nontrivially it suffices to consider~$\mathbb{P}^1$ and the exceptional collection given by~$\langle\mathcal{O}_{\mathbb{P}^1}(i),\mathcal{O}_{\mathbb{P}^1}(i+1)\rangle$.
  To see that automorphisms can act nontrivially, consider the surface $ \bP^1 \times \bP^1$.
  Then every involution exchanging the two components, which is topologically nontrivial,
  sends the exceptional line bundle $ \cO ( 1, 0 ) $ to $ \cO ( 0, 1 )$.
  In particular, such an involution does not preserve the semiorthogonal decomposition induced by $ \cO ( 1, 0 )$.

\end{example}

We will finally show that the two group actions introduced so far commute with each other. We expect the same result to hold for the action of the algebraic group action suggested in \Cref{remark:toen-vaquie-group}. In the absolute case this result is given (without proof) as \cite[Lemma~2.2(ii)]{MR3987870}. For the reader's sake we fill in some of the details in the relative case.

\begin{proposition}
  The actions of~$\Br_\ell$ and~$\Auteq(f)$ commute.
\end{proposition}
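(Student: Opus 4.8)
The plan is to reduce the statement to the purely categorical fact that mutation of semiorthogonal decompositions commutes with the application of an exact autoequivalence, and then to note that both actions are defined compatibly with base change, so the reduction loses nothing. First, since $\sodl{\ell}_f$ is a sheaf on the big affine \'etale site $\Affet{U}$, it suffices to prove that the $\Br_\ell$-action and the $\Auteq(f)$-action commute on the set $\sodl{\ell}_f(\phi)$ for each affine $U$-scheme $\phi\colon V\to U$. For such a $\phi$ an element $\Phi\in\Auteq(f)$ acts through its base change $\Phi_V$, which is an $f_V$-linear autoequivalence of $\Perf\cX_V$ (that $f$-linearity survives base change is checked just as the compatibility of mutation with base change in Proposition~\ref{proposition:braid-group-on-sod-f}), while $\Br_\ell$ acts by mutation of $V$-linear semiorthogonal decompositions of $\Perf\cX_V$. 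Thus both actions live inside the single triangulated category $\cT=\Perf\cX_V$, and writing $\Psi=\Phi_V$ the problem becomes: for an exact autoequivalence $\Psi$ of $\cT$ and a strong semiorthogonal decomposition $\mathcal{A}^\bullet=\langle\mathcal{A}^1,\ldots,\mathcal{A}^\ell\rangle$, is
\[
  \rightmutation_i(\Psi(\mathcal{A}^\bullet))=\Psi(\rightmutation_i(\mathcal{A}^\bullet)),\qquad \leftmutation_i(\Psi(\mathcal{A}^\bullet))=\Psi(\leftmutation_i(\mathcal{A}^\bullet))
\]
for every $i$? (Here $\Psi(\mathcal{A}^\bullet)$ is again a strong semiorthogonal decomposition, since $\Psi$ preserves semiorthogonality and admissibility, exactly as in the proof of the $\Auteq(f)$-action above, so the left-hand mutations are defined.)

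The second step is to prove this identity, and I would do it by observing that an equivalence $\Psi$ preserves each of the three operations on (strictly full) subcategories out of which $\rightmutation_i$ and $\leftmutation_i$ are built: it preserves left and right orthogonals, because it is an isomorphism on all $\Hom$-groups; it sends the smallest strictly full triangulated subcategory closed under direct summands containing a collection of objects to the one containing the images, so $\Psi(\langle\mathcal{C}^1,\ldots,\mathcal{C}^k\rangle)=\langle\Psi(\mathcal{C}^1),\ldots,\Psi(\mathcal{C}^k)\rangle$; and, being essentially injective, it commutes with intersections of strictly full subcategories. Inserting these into the defining formula
\[
  \rightmutation_i(\mathcal{A}^\bullet)={}^\perp\langle\mathcal{A}^1,\ldots,\mathcal{A}^{i-2},\mathcal{A}^i\rangle\cap\langle\mathcal{A}^{i+1},\ldots,\mathcal{A}^\ell\rangle^\perp
\]
(and the analogous one for $\leftmutation_i$) yields the claimed identity immediately.

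Finally I would translate back: the braid generators act by the left and right mutations at the various positions, and since the $\Auteq(f)$-action is termwise, $\Psi$ applied to a mutated decomposition equals the mutation of $\Psi$ applied to the decomposition; iterating over a word in $\Br_\ell$ shows that the two actions on $\sodl{\ell}_f(\phi)$ commute, hence so do the actions on $\sodl{\ell}_f$. I do not expect a genuine obstacle here — the argument is essentially formal. The only points deserving a line of care are the identification, at a $V$-point, of the $\Auteq(f)$-action with the base-changed autoequivalence $\Phi_V$ (so that it is visibly ``the same kind of operation'' on $\Perf\cX_V$ that the mutation acts on), and the remark that it is the essential injectivity of an equivalence, together with strict fullness of all the subcategories in sight, that makes intersections of subcategories transform correctly.
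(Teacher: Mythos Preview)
Your proposal is correct and follows essentially the same approach as the paper: both arguments boil down to the observation that an exact autoequivalence preserves orthogonals, hence commutes with the mutation operations. The paper's proof is slightly more terse in that it first reduces to the length-$2$ case $\langle\cA,\cB\rangle$ (since the other components are untouched by a single mutation) and then checks directly that $\Phi({}^\perp\cB)={}^\perp\Phi(\cB)$, whereas you work with the full defining formula and list all three preservation properties (orthogonals, generated subcategories, intersections); your treatment of the reduction to $V$-points and base change of $\Phi$ is also more explicit than the paper's.
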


\begin{proof}
  It suffices to show that the right mutation~$\rightmutation_i(\mathcal{A}^\bullet)$
  commutes with an~$f$-linear autoequivalences~$\Phi$,
  the proof for the left mutation being dual.
  But, indeed, we have
  \begin{align*}
    \Phi
    \left(
      \rightmutation_i(\mathcal{A}^\bullet)
    \right)
    &=
    {}^\perp\langle \Phi(\mathcal{A}^1),\ldots,\Phi(\mathcal{A}^{i-2}),\Phi(\mathcal{A}^i)\rangle\cap\langle\Phi(\mathcal{A}^{i+1}),\ldots,\Phi(\mathcal{A}^\ell)\rangle^\perp
    \\
    &=
    \rightmutation_i(\Phi(\mathcal{A}^\bullet)),
  \end{align*}
  where both identities follow from \eqref{equation:right mutation}.
\end{proof}

\section{Example: families of cubic surfaces}
\label{section:del-pezzo-families}
In this final section we briefly discuss the precise relationship between
the moduli spaces~$\sodl{\ell}_f$ associated to a family of cubic surfaces
and the moduli spaces of lines in a family of cubic surfaces,
the motivating example from the introduction.

Let $ f \colon \cX \to U $ be a versal family of cubic surfaces,
with $U$ connected, and defined over an algebraically closed field.
One can associate to $ f $ the following \'etale morphisms over $ U $:
\begin{enumerate}
  \item The moduli space of nontrivial semiorthogonal decompositions $ \sodl{2}_f \to U $ of length~$ 2 $.
  \item The moduli space of $  (- 1) $-curves (or relative Fano scheme of lines) in the fibers of $f$,
    which will be denoted by $ t \colon \mathcal{F} \to U$.
    It is a finite \'etale morphism of degree~27.
\end{enumerate}

Let us explain how these two spaces are related.
Let us take the base change
\begin{equation}
  \begin{tikzcd}
    \cX_t\defeq
    \cX \times_{ f, U, t } \mathcal{F} \arrow{r}{t^* f} & \mathcal{F},
  \end{tikzcd}
\end{equation}
over which there exists the universal line $ e\colon\mathcal{L} \hookrightarrow \cX_t$.
Firstly, we can construct two open immersions of the form
\begin{equation}
  \label{equation:first-open-immersion}
  \begin{tikzcd}
    \mathcal{F}\arrow[hook]{r} & \sodl{2}_f
  \end{tikzcd}
\end{equation}
of algebraic spaces over~$U$.
To do this, we consider the~$f$\dash linear semiorthogonal decomposition
given by the~$t^*f$-exceptional object~$\mathcal{O}_{e(\mathcal{L})}$ and its complement.
The choice between the left and right orthogonal gives two different open immersions.
They are related by the action of the braid group $ \Br_2 $.

Next, let $ \ttilde \colon \tilde{\mathcal{F}} \to U $ be the Galois closure of $ t $.
It is well known that the Galois group of $ \ttilde $ is the Weyl group $ \mathrm{W} ( \mathrm{E}_6) $ of order~51840. See \cite[page~716]{MR0552521} for a proof, and the book \cite{MR0091260} by Camille Jordan for the first treatment of the topic.
The base change
\begin{equation}
  \begin{tikzcd}
    \cX_{ \ttilde }\defeq
    \cX \times_{ f, U, \ttilde } \tilde{\mathcal{F}} \arrow{r}{\ttilde^* f } & \tilde{\mathcal{F}}
  \end{tikzcd}
\end{equation}
admits 6 closed immersions $ e_1, \dots, e_6 \colon\mathcal{L}
\hookrightarrow \cX_{ \ttilde } $ over $ \tilde{\mathcal{F}} $ such that for each $ x \in \tilde{\mathcal{F}} $, the images over $ x $ are mutually disjoint $(-1)$-curves of the fiber $ \left( \cX_{ \ttilde } \right)_x
=
\cX_{ \ttilde ( x )}
$.
Similarly, there exists another closed immersion
\(
  \ell _{ 1 2 }
  \colon
  \mathcal{L}
  \hookrightarrow
  \cX_{ \ttilde }
\)
representing the unique $(-1)$-curve which intersects
\(
  e _{ 1 },
  e _{ 2 }
\)
but not
\(
  e _{ 3 },
  \dots,
  e _{ 6 }
\).
We put
\(
  \cO _{ \cX } ( H )
  \defeq
  \cO _{ \cX } ( e _{ 1 } ( \cL ) + e _{ 2 } ( \cL ) + \ell _{ 1 2 } ( \cL ) )
\)
and
\(
  \cO _{ \cX } ( 2 H )
  \defeq
  \cO _{ \cX } ( H ) ^{ \otimes 2 }
\).
Now let
\begin{equation}
  \begin{tikzcd}
    \iota \colon
    \tilde{\mathcal{F}} \arrow[hook]{r} & \sodl{9}_f
  \end{tikzcd}
\end{equation}
be the open immersion of algebraic spaces over~$U$ corresponding to the $ \ttilde^* f $-exceptional collection obtained by the $ f $-exceptional collection
\begin{equation}
  \cO_\cX,
  \cO_\cX ( H ),
  \cO_\cX ( 2 H ),
  \cO_{ e_1 ( \mathcal{L} ) },
  \dots,
  \cO_{ e_6 ( \mathcal{L} ) }
\end{equation}
via Orlov's blowup formula \cite[Theorem~4.3]{MR1208153}.

Let $ W \leq \Br_9 $ be the subgroup of elements which preserves the connected component $ \iota ( \tilde{\mathcal{F}} )
$.
It acts as covering transformations of the Galois cover $ \ttilde
$,
so there exists a natural homomorphism
\begin{equation}
  \label{eq:from W to W(E6)}
  \begin{tikzcd}
    W \arrow{r} & \mathrm{W} ( \mathrm{E}_6 ).
  \end{tikzcd}
\end{equation}

\begin{proposition}
  The natural morphism \eqref{eq:from W to W(E6)} is surjective.
\end{proposition}
\begin{proof}
  It is enough to show that the action of $W$ on a fiber of $ \ttilde $ is transitive. For this purpose, fix a point $ u \in U $ and let~$S$
  be the corresponding cubic surface.
  Then the points of the fiber $ \ttilde^{ -1 } ( u ) $ bijectively correspond to markings on $S$.
  Let $ \ell_1, \dots, \ell_6 $ and $ \ell '_1, \dots, \ell '_6 $ be two sequences of six disjoint lines on $ S $.
  By \cite{MR1286839}, the action of $ \Br_9 $ on the set of full exceptional collections of $\derived^\bounded(S)  $ is transitive (up to shifts).
  Hence there exists $ b \in \Br_9 $ such that
  \begin{equation}
    b \left( \cO_S, \cO_S ( H ), \cO_S ( 2 H ), \cO_{ \ell_1 }, \dots, \cO_{ \ell_6 } \right)
    =
    \left( \cO_S, \cO_S ( H ' ), \cO_S ( 2 H ' ), \cO_{ \ell'_1 }, \dots, \cO_{ \ell '_6 } \right),
  \end{equation}
  where~$H$ and~$H'$ are the pullbacks of the classes of lines of~$\mathbb{P}^2$ obtained by contracting the sets of~$6$ lines $ \ell_1, \dots, \ell_6 $ and $ \ell '_1, \dots, \ell '_6$, respectively.
  As the braid group action exchanges the connected components of $ \sodl{ 9 }_f$,
  it follows that $ b ( \iota ( \tilde{\mathcal{F}} ) ) = \iota ( \tilde{\mathcal{F}} )$, i.e., $ b \in W$.
\end{proof}

The group $ W $ is interesting
in that it is related to such a classical subject as the monodromy of lines though it arises from the categorical notion of mutations.
Note however that the homomorphism~\eqref{eq:from W to W(E6)} has a big kernel, since no element of $ W $ has finite order whereas $ \mathrm{W} ( \mathrm{E}_6 ) $ is finite. For this reason it is yet to be further investigated. Specifically we ask:

\begin{problem}
  Identify the elements of the group $ W $. For example, are they exactly those elements of $ \Br_9 $ which preserves the Orlov type semiorthogonal decomposition?
  Also, identify the kernel of \eqref{eq:from W to W(E6)}. Does it coincide with the subgroup $ \Br_6\hookrightarrow W $ acting on the last six components of the semiorthogonal decomposition?
\end{problem}

\appendix

\section{Lifting perfect complexes and morphisms thereof}
\label{app:liftings}
In this appendix we describe various lifting results for perfect complexes and morphisms of perfect complexes
in an inverse system of schemes.
\Cref{lemma3.1,proposition:3.2,proposition:3.3}
were suggested by a referee,
in support of their arguments in \cref{subsection:extending-sods}.

Let $f\colon \mathcal X \to U$ be a morphism of schemes. Fix a directed system of $U$-algebras $\set{A_i}_i$, namely a compatible system of affine maps $\Spec A_i \to U$, and set $A = \varinjlim A_i$. We denote the base changes of $f$ by
\begin{equation}
  \begin{tikzcd}
    \mathcal X_i \defeq \mathcal X \times_U\Spec A_i \arrow{r}{f_i} & \Spec A_i,\qquad   \mathcal X_A \defeq \mathcal X \times_U\Spec A \arrow{r}{f_A} & \Spec A.
  \end{tikzcd}
\end{equation}
If $j \geq i$, we have maps $A_i \to A_j \to A$ inducing $\Spec A \to \Spec A_j \to \Spec A_i$ and $\mathcal X_A \to \mathcal X_j \to \mathcal X_i$, and for an object $F \in \Perf \mathcal X_i$ we denote by $F|_{\mathcal X_j}$ and $F|_{\mathcal X_A}$ its derived pullback along the canonical projections $\mathcal X_j \to \mathcal X_i$ and $\mathcal X_A \to \mathcal X_i$, respectively. Note that these are perfect complexes on $\mathcal X_j$ and $\mathcal X_A$, as derived pullback always preserves perfect complexes \cite[\href{https://stacks.math.columbia.edu/tag/09UA}{Tag 09UA}]{stacks-project}.

\begin{lemma}
  \label{lemma3.1}
  Let $B \to A$ be a morphism of $U$-algebras, and let $f \colon \mathcal X \to U$ be a flat morphism. Form the cartesian square
  \begin{equation}
    \begin{tikzcd}
      \mathcal X_A \MySymb{dr} \arrow{r}{\xi}\arrow[swap]{d}{f_A} & \mathcal X_B \arrow{d}{f_B} \\
      \Spec A \arrow[swap]{r}{\eta} & \Spec B.
    \end{tikzcd}
  \end{equation}
  Then
  \begin{enumerate}
    \item
      \label{item-i}
      the canonical morphism of sheaves
      \begin{equation}
        \label{base-change-iso}
        \begin{tikzcd}
          f_B^\ast \eta_\ast \mathcal O_{\Spec A} \arrow{r} & \xi_\ast \mathcal O_{\mathcal X_A}
        \end{tikzcd}
      \end{equation}
      is an isomorphism, and
    \item
      \label{item-ii}
      for any perfect complexes $F,G \in \Perf \mathcal X_B$, we have
      \begin{equation}
        \RRHom(F,G \otimes \xi_\ast \mathcal O_{\mathcal X_A}) \cong \RRHom(F,G) \otimes_BA.
      \end{equation}
  \end{enumerate}
\end{lemma}

\begin{proof}
  Both sheaves $f_B^\ast \eta_\ast \mathcal O_{\Spec A}$ and $\xi_\ast \mathcal O_{\mathcal X_A}$ are pure (the first because $\eta$ is affine and $f_B$ is flat, the second because $\xi$ is affine), therefore the question is local and we may assume both $\mathcal X$ and $U$ are affine, say $\mathcal X = \Spec S$ and $U=\Spec C$. Then
  \begin{equation}
    \mathcal X_A = \Spec S \otimes_CA,\qquad \mathcal X_B = \Spec S\otimes_CB.
  \end{equation}
  In this situation, the morphism \eqref{base-change-iso} is the isomorphism $(S\otimes_CB)\otimes_BA \cong S \otimes_CA$, which proves \cref{item-i}.

  To prove \cref{item-ii}, we compute
  \begin{equation}
    \begin{aligned}
      \RRHom(F,G \otimes \xi_\ast \mathcal O_{\mathcal X_A})
      &\cong \RRHom(\mathcal O_{\mathcal X_B}, F^\vee \otimes G \otimes \xi_\ast \mathcal O_{\mathcal X_A})& \mbox{as }F\mbox{ is perfect} \\
      &\cong f_{B\ast}(F^\vee \otimes G \otimes \xi_\ast \mathcal O_{\mathcal X_A}) & \mbox{by adjunction} \\
      &\cong f_{B\ast}(F^\vee \otimes G \otimes f_B^\ast \eta_\ast \mathcal O_{\Spec A} ) & \mbox{by \ref{item-i}} \\
      &\cong f_{B\ast}(F^\vee \otimes G)\otimes \eta_\ast \mathcal O_{\Spec A} & \mbox{by projection formula} \\
      &\cong \RRHom(F,G)\otimes_BA & \mbox{by adjunction}
    \end{aligned}
  \end{equation}
  which completes the proof.
\end{proof}

The following lemma can be seen as a version of \cite[Proposition~2.2.1]{MR2177199} for arbitrary morphisms of perfect complexes, as opposed to isomorphisms thereof.

\begin{proposition}
  \label{proposition:3.2}
  Let $f\colon \mathcal X \to U$ be a flat morphism of schemes. Fix a directed system of $U$-algebras $\set{A_i}_{i \in I}$ and two perfect complexes $F,G \in \Perf \mathcal X_i$ for a fixed $i \in I$. Let $\phi \colon F|_{\mathcal X_A} \to G|_{\mathcal X_A}$ be a morphism in $\Perf \mathcal X_A$. Then there exists an index $j \geq i$ and a morphism $\phi_j \colon F|_{\mathcal X_j} \to G|_{\mathcal X_j}$ such that $\phi_j|_{\mathcal X_A} = \phi$.
\end{proposition}

\begin{proof}
  Let $\xi_i \colon \mathcal X_A \to \mathcal X_i$ be the natural morphism. Note that
  \begin{equation}
    \begin{aligned}
      \RRHom(F|_{\mathcal X_A} , G|_{\mathcal X_A})
      &= \RRHom(\xi_i^\ast F,\xi_i^\ast G) \\
      &\cong \RRHom(F,\xi_{i\ast}\xi_i^\ast G) & \mbox{by adjunction }\eqref{adjunction} \\
      &\cong \RRHom(F,G\otimes \xi_{i\ast}\mathcal O_{\mathcal X_A}) & \mbox{by projection formula} \\
      &\cong \RRHom(F,G) \otimes_{A_i}A & \mbox{by \Cref{lemma3.1}}.
    \end{aligned}
  \end{equation}
  A similar argument shows that
  \begin{equation}
    \RRHom(F|_{\mathcal X_j},G|_{\mathcal X_j}) \cong \RRHom(F,G) \otimes_{A_i}A_j,
  \end{equation}
  for every $j \geq i$. Now, our starting point is a morphism
  \begin{equation}
    \phi \in \mathrm{H}^0(\RRHom(F|_{\mathcal X_A} , G|_{\mathcal X_A})) = \mathrm{H}^0(\RRHom(F,G) \otimes_{A_i}A).
  \end{equation}
  We need to check that $\phi$ comes from an element in $\mathrm{H}^0(\RRHom(F,G) \otimes_{A_i}A_j)$ under the canonical map
  \begin{equation}
    \begin{tikzcd}
      \RRHom(F,G) \otimes_{A_i}A_j \arrow{r} & \RRHom(F,G) \otimes_{A_i}A,
    \end{tikzcd}
  \end{equation}
  for some $j \geq i$.

  To begin with, we claim that $\RRHom(F,G)$ is a bounded above complex. This follows from \cite[\href{https://stacks.math.columbia.edu/tag/0GEN}{Tag 0GEN}]{stacks-project} after observing that $F$ and $G$ are perfect, and that $\Spec A_i$, being affine, is quasicompact and semiseparated. Therefore $\RRHom(F,G)$ can be represented by a bounded above complex of free $A_i$-modules
  \begin{equation}
    \begin{tikzcd}
      (P^\bullet,d^\bullet)\colon \qquad
      \cdots \arrow{r} & P^{n-2}\arrow{r}{d^{n-2}} & P^{n-1}\arrow{r}{d^{n-1}} & P^n \arrow{r} & 0.
    \end{tikzcd}
  \end{equation}
  An element in $\mathrm{H}^0(\RRHom(F,G) \otimes_{A_i}A)$ is then represented by an element $h \in P^0 \otimes_{A_i}A$ such that $d(h) = 0$, where $d=d^0\colon P^0 \to P^1$ is the differential of $P^\bullet$. Note that $h$ is contained in the tensor product $A_i^{\oplus n_0} \otimes_{A_i}A$ for some finite free submodule $A_i^{\oplus n_0} \subset P^0$ and, moreover, $d(A_i^{\oplus n_0}) \subset A_i^{\oplus n_1}\subset P^1$ for some finite free submodule $A_i^{\oplus n_1}\subset P^1$. Thus, to lift $h$, it is enough to lift the corresponding element $h \in A_i^{\oplus n_0} \otimes_{A_i}A \cong A^{\oplus n_0}$ to an element $h_j \in A_i^{\oplus n_0} \otimes_{A_i}A_j \cong A_j^{\oplus n_0}$ for some $j$, and to note that the equality
  \begin{equation}
    d(h_j)|_A = d(h_j|_A) = dh = 0
  \end{equation}
  in $A_i^{\oplus n_1}\otimes_{A_i}A \cong A^{\oplus n_1}$ implies that $d(h_j)|_{A_k} = 0$ in $A_i^{\oplus n_1}\otimes_{A_i}A_k \cong A_k^{\oplus n_1}$ for appropriate $k$. This completes the proof.
\end{proof}

\begin{corollary}\label{corollary:bijective}
  Consider the setup of \Cref{proposition:3.2}, and assume furthermore that
  \(
    \cX _{ i }
  \)
  is quasicompact. Then the canonical map
  \begin{equation}
    \begin{tikzcd}
      \displaystyle\varinjlim_{j \geq i}
      \Hom_{\Perf \mathcal X_j}(F|_{\mathcal X_j},G|_{\mathcal X_j})
      \arrow{r} &
      \Hom_{\Perf \mathcal X_A}(F|_{\mathcal X_A},G|_{\mathcal X_A})
    \end{tikzcd}
  \end{equation}
  is bijective.
\end{corollary}

\begin{proof}
  The surjectivity follows from \Cref{proposition:3.2}. For the injectivity, let $\phi_j \colon F|_{\mathcal X_j} \to G|_{\mathcal X_j}$ be a morphism such that $\phi_j|_{\mathcal X_A} = 0$. Consider the following distinguished triangle
  \begin{equation}
    \begin{tikzcd}
      F|_{\mathcal X_j} \arrow{r}{\phi_j} & G|_{\mathcal X_j} \arrow{r}{\iota _{ j }} & C _{ j } \arrow{r} & F|_{\mathcal X_j}[1]
    \end{tikzcd}
  \end{equation}
  in $\Perf \mathcal X_j$, where $C _{ j }$ is the cone of $\phi_j$.
  The assumption implies that this triangle is split after restriction to $\mathcal X_A$, which means that there exists a morphism
  \(
    \pi \colon C _{ j } \vert _{ \cX _{ A } }
    \to
    G \vert _{ \cX _{ A } }
  \)
  such that
  \(
    \pi \circ \iota _{ j } \vert _{ \cX _{ A } }
    =
    \id _{ G \vert _{ \cX _{ A } } }
  \).
  By \Cref{proposition:3.2}, there exists an index $k \geq j$ and a morphism
  \(
    \pi _{ k }
    \colon
    C _{ j } \vert _{ \cX _{ k } }
    \to
    G \vert _{ \cX _{ k } }
  \)
  such that
  \(
    \pi _{ k } \vert _{ \cX _{ A } }
    =
    \pi
  \).
  In particular, we have
  \(
    \pi _{ k }\vert _{ \cX _{ A } } \circ \iota _{ j } \vert _{ \cX _{ A } }
    =
    \id _{ G \vert _{ \cX _{ A } } }
  \).
  Since
  \(
     \cX _{ i }
  \)
  and hence
  \(
     \cX _{ \ell }
  \)
  is quasicompact for every $\ell \geq i$, \Cref{corollary:a priori isomorphy} implies that
  \(
    \pi _{ \ell } \circ \iota _{ j } \vert _{ \cX _{ \ell } }
  \)
  is an isomorphism for some $\ell \geq k$. This, in turn, implies that
  \(
    \phi _{ j } \vert _{ \cX _{ \ell } } = 0
  \).
\end{proof}

\begin{proposition}
  \label{proposition:3.3}
  Let $f\colon \mathcal X \to U$ be a flat semiseparated quasicompact morphism of schemes. Fix a directed system of $U$-algebras $\set{A_i}_{i \in I}$, and set $A = \varinjlim A_i$. Then, for any $F \in \Perf \mathcal X_A$, there is an index $j$ and a lift of $F$ over $\mathcal X_j$, i.e.~a perfect complex $F_j \in \Perf \mathcal X_j$ such that $F \cong F_j|_{\mathcal X_A}$.
\end{proposition}

\begin{proof}
    Take an index
    \(
       i
    \).
    Since
    \(
       \cX _{ i }
    \)
    is quasicompact and semiseparated over the affine scheme
    \(
       \Spec A _{ i }
    \),
    it is itself quasicompact and semiseparated. Take a classical generator $G _{ i }$ of $\Perf \cX _{ i }$ by \Cref{lemma:bvdb-ko-p}\ref{item:cg-i},
  whose base change $G _{ A }$ is again a classical generator of $\Perf \cX _{ A }$ by \Cref{lemma:bvdb-ko-p}\ref{item:cg-iii}.
  Then
  \(
    F
  \)
  is the image of an idempotent endomorphism of an iterated cone of endomorphisms of $G _{ A }$.
  By \Cref{corollary:bijective}, the endomorphisms and the idempotent lift to $\cX _{ j }$ for some
  \(
    j \ge i
  \).
  Thus,
  the object
  \(
    F _{ j } \in \cX _{ j }
  \)
  we obtain by taking the corresponding iterated cone and the image of the idempotent satisfies
  \(
    F _{ j } \vert _{ \cX _{ A } }
    \cong
    F
  \)
  (a similar argument also appears in the proof of \Cref{theorem:sod-f-lfp} in \Cref{subsection:lfp-using-generators}).
\end{proof}

\bibliographystyle{amsplain}
\providecommand{\bysame}{\leavevmode\hbox to3em{\hrulefill}\thinspace}
\providecommand{\MR}{\relax\ifhmode\unskip\space\fi MR }
\providecommand{\MRhref}[2]{%
  \href{http://www.ams.org/mathscinet-getitem?mr=#1}{#2}
}
\providecommand{\href}[2]{#2}

\medskip
\small
\emph{Pieter Belmans}, \url{p.belmans@uu.nl} \\
\textsc{Mathematical Institute, Utrecht University, Budapestlaan 6, 3584 CD Utrecht, The Netherlands} \\

\emph{Shinnosuke Okawa}, \texttt{okawa@math.sci.osaka-u.ac.jp} \\
\textsc{Department of Mathematics, Graduate School of Science, Osaka University, Machikaneyama 1-1, Toyonaka, Osaka 560-0043, Japan} \\

\emph{Andrea T.~Ricolfi}, \texttt{aricolfi@sissa.it} \\
\textsc{Scuola Internazionale Superiore di Studi Avanzati (SISSA), Via Bonomea 265, 34136 Trieste, Italy}

\end{document}